\@undefined\usepackage[usenames,dvips]{color}
\else\usepackage[usenames,dvipsnames]{color}
\definecolor{ChadDarkBlue}{rgb}{.1,0,.2}  
\definecolor{ChadBlue}{rgb}{.1,.1,.5}  
\definecolor{ChadRoyal}{rgb}{.2,.2,.8}  
\definecolor{ChadGreen}{rgb}{0,.4,0}    
\definecolor{ChadRed}{rgb}{.5,0,.5}  
\def\oprava#1{{#1}} 
\def\zmena#1{{#1}} 
\def\autori#1{{#1}} 
\def\smallskip{\vskip\smallskipamount}
\def\medskip{\vskip\medskipamount}
\def\bigskip{\vskip\bigskipamount}
\newtheorem{theorem}{Theorem}[section]
\newtheorem{lemma}[theorem]{Lemma}
\newtheorem{statement}[theorem]{Statement}
\newtheorem{proposition}[theorem]{Proposition}
\newtheorem{corollary}[theorem]{Corollary}
\newtheorem{remark}[theorem]{Remark}
\newtheorem{definition}[theorem]{Definition}
\def\operatorname#1{{\mathop{\rm #1}}}
\newcommand{\ord}{\operatorname{ord}}
\def\comp{\leftrightarrow}
\newcommand\implik{{\ \Longrightarrow\ }}
\newcounter{ok}
{\end{list}}
\newcounter{aok}
{\end{list}}
\def\go#1;#2;#3 {\vbox to0pt{\kern-#3\hbox{\kern#2 #1}\vss}\nointerlineskip}
\newcommand{\Mea}{{\text{\rm{}M}}}
\newcommand{\Tea}{{\text{\rm{}T}}}
\newcommand{\B}{\text{\rm{}B}}
\newcommand{\C}{\text{\rm{}C}}
\newcommand{\Sh}{\text{\rm{}S}}
\begin{document}

\title{More about sharp and meager elements in 
Archimedean atomic lattice effect algebras}

\author[jp]{Josef~Niederle}
\ead{niederle@math.muni.cz}

\author[jp]{Jan~Paseka\corref{cor1}} 
\ead{paseka@math.muni.cz}
\address[jp]{Department of Mathematics and Statistics,
			Faculty of Science,
			Masaryk University,
			{Kotl\'a\v r{}sk\' a\ 2},
			CZ-611~37~Brno, Czech Republic}        	

\cortext[cor1]{Corresponding author}

\date{Received: date / Accepted: date}


\begin{abstract}
The aim of our paper is twofold. First, 
we thoroughly study the set of meager elements $\Mea(E)$, 
the center $\C(E)$ and the compatibility 
center $\B(E)$ in the setting of 
atomic Archimedean lattice effect algebras $E$. The main result is that 
in this case  the center $\C(E)$ is bifull (atomic) iff 
the compatibility center $\B(E)$ is bifull (atomic) whenever $E$ is 
sharply dominating. As a by-product, we 
give a new descriciption  of the smallest sharp element over $x\in E$ via the 
basic decomposition of $x$. 
Second, we prove the Triple Representation Theorem for 
sharply dominating atomic Archimedean lattice effect algebras. 

\end{abstract}

\begin{keyword}{lattice effect algebra \sep center \sep atom 
\sep MacNeille completion \sep sharp element \sep meager element}
\end{keyword}

\maketitle

\zmena{\section*{Introduction}}

\label{intro}
The history of quantum structures started at the beginning of the 20th century.
Observable events constitute a Boolean algebra in a classical physical system.
Because event structures in quantum mechanics cannot be described by Boolean algebras, 
Birkhoff and von Neumann introduced orthomodular lattices
which were considered as the standard quantum logic. Later on, orthoalgebras  were introduced
as the generalizations of orthomodular posets, which were considered
as "sharp"  quantum logic.

In the nineties of the twentieth century, two
equivalent quantum structures, D-posets and effect algebras were extensively
studied, which were considered as "unsharp" generalizations of the
structures which arise in quantum mechanics, in particular, of orthomodular
lattices and MV-algebras.

In \cite{PR6x} Paseka and Rie\v canov\'a published as open problem whether the center
$\C(E)$ is a bifull sublattice of an Archimedean atomic lattice effect algebra $E$. This 
question was answered by M. Kalina in \cite{kalina1} who proved 
that $\C(E)$ need not be 
a bifull sublattice of $E$ even if $C(E)$ is atomic. 

The aim of our paper is twofold. First, 
we thoroughly study the set of meager elements $\Mea(E)$, 
the center $\C(E)$ and the compatibility 
center $\B(E)$ in the setting of 
atomic Archimedean lattice effect algebras $E$. The main result 
of Section 2 is that 
in this case  the center $\C(E)$ is bifull (atomic) iff 
the compatibility center $\B(E)$ is bifull (atomic) whenever $E$ is 
sharply dominating. As a by-product, we 
give a new descriciption  of the smallest sharp element over $x\in E$ via the 
basic decomposition of $x$. 
Second, in Section 3  we prove the Triple Representation Theorem established 
by G. Jen\v{c}a in \cite{jenca} in the setting of complete lattice effect 
algebras  for sharply dominating atomic Archimedean lattice effect algebras.

\medskip

\section{{Preliminaries} and basic facts}

\zmena{Effect algebras were introduced by D.J. Foulis and 
M.K. Bennett (see \cite{FoBe}) 
for modelling unsharp measurements in a Hilbert space. In this case the 
set $E(H)$ of effects is the set of all self-adjoint operators $A$ on 
a Hilbert space $H$ between the null operator $0$ and the identity 
operator $1$ and endowed with the partial operation $+$ defined 
iff  $A+B$ is in $E(H)$, where $+$ is the usual operator sum. }

\zmena{In general form, an effect algebra is in fact a partial algebra 
with one partial binary operation and two unary operations satisfying 
the following axioms due to D.J. Foulis and 
M.K. Bennett.}

\begin{definition}\label{def:EA}{\autori{{\rm{}\cite{ZR62}  }}
{\rm A partial algebra $(E;\oplus,0,1)$ is called an {\em effect algebra} if
$0$, $1$ are two distinct elements and $\oplus$ is a partially
defined binary operation on $E$ which satisfy the following
conditions for any $x,y,z\in E$:
\begin{description}
\item[\rm(Ei)\phantom{ii}] $x\oplus y=y\oplus x$ if $x\oplus y$ is defined,
\item[\rm(Eii)\phantom{i}] $(x\oplus y)\oplus z=x\oplus(y\oplus z)$  if one
side is defined,
\item[\rm(Eiii)] for every $x\in E$ there exists a unique $y\in
E$ such that $x\oplus y=1$ (we put $x'=y$),
\item[\rm(Eiv)\phantom{i}] if $1\oplus x$ is defined then $x=0$.
\end{description}
}%
}
\end{definition}

We often denote the effect algebra $(E;\oplus,0,1)$ briefly by
$E$. On every effect algebra $E$  a partial order
$\le$  and a partial binary operation $\ominus$ can be 
introduced as follows:
\begin{center}
$x\le y$ \mbox{ and } {\autori{$y\ominus x=z$}} \mbox{ iff }$x\oplus z$
\mbox{ is defined and }$x\oplus z=y$\,.
\end{center}

If $E$ with the defined partial order is a lattice (a complete
lattice) then $(E;\oplus,0,1)$ is called a {\em lattice effect
algebra} ({\em a complete lattice effect algebra}).

\begin{definition}\label{subef}{\rm
Let $E$ be an  effect algebra.
Then $Q\subseteq E$ is called a {\em sub-effect algebra} of  $E$ if 
\begin{enumerate}
\item[(i)] $1\in Q$
\item[(ii)] if out of elements $x,y,z\in E$ with $x\oplus y=z$
two are in $Q$, then $x,y,z\in Q$.
\end{enumerate}
If $E$ is a lattice effect algebra and $Q$ is a sub-lattice and a sub-effect
algebra of $E$, then $Q$ is called a {\em sub-lattice effect algebra} of $E$.}
\end{definition}

Note that a sub-effect algebra $Q$ 
(sub-lattice effect algebra $Q$) of an  effect algebra $E$ 
(of a lattice effect algebra $E$) with inherited operation 
$\oplus$ is an  effect algebra (lattice effect algebra) 
in its own right.


For an element $x$ of an effect algebra $E$ we write
$\ord(x)=\infty$ if $nx=x\oplus x\oplus\dots\oplus x$ ($n$-times)
exists for every positive integer $n$ and we write $\ord(x)=n_x$
if $n_x$ is the greatest positive integer such that $n_xx$
exists in $E$.  An effect algebra $E$ is {\em Archimedean} if
$\ord(x)<\infty$ for all $x\in E$.

A minimal nonzero element of an effect algebra  $E$
is called an {\em atom}  and $E$ is
called {\em atomic} if below every nonzero element of 
$E$ there is an atom.

For a poset $P$ and its subposet $Q\subseteq P$ we denote, 
for all $X\subseteq Q$, by $\bigvee_{Q} X$ the join of 
the subset $X$ in the poset $Q$ whenever it exists. 
Recall also $Q\subseteq P$ is {\em densely embedded in} $P$ if for every
element $x\in P$ there exist $S,T\subseteq Q$ such that
$x=\bigvee_{{P}} S=\bigwedge_{{P}} T$. 

We say that a finite system $F=(x_k)_{k=1}^n$ of not necessarily
different elements of an effect algebra $(E;\oplus,0,1)$ is
\zmena{\it orthogonal} if $x_1\oplus x_2\oplus \cdots\oplus
x_n$ (written $\bigoplus\limits_{k=1}^n x_k$ or $\bigoplus F$) exists
in $E$. Here we define $x_1\oplus x_2\oplus \cdots\oplus x_n=
(x_1\oplus x_2\oplus \cdots\oplus x_{n-1})\oplus x_n$ supposing
that $\bigoplus\limits_{k=1}^{n-1}x_k$ is defined and
$\bigoplus\limits_{k=1}^{n-1}x_k\le x'_n$. We also define 
$\bigoplus \emptyset=0$.
An arbitrary system
$G=(x_{\kappa})_{\kappa\in H}$ of not necessarily different
elements of $E$ is called \zmena{orthogonal} if $\bigoplus K$
exists for every finite $K\subseteq G$. We say that for a \zmena{orthogonal} 
system $G=(x_{\kappa})_{\kappa\in H}$ the
element $\bigoplus G$ exists iff
$\bigvee\{\bigoplus K
\mid
K\subseteq G$ is finite$\}$ exists in $E$ and then we put
$\bigoplus G=\bigvee\{\bigoplus K\mid K\subseteq G$ is
finite$\}$. We say that $\bigoplus G$ is the {\em orthogonal sum} 
of $G$. (Here we write $G_1\subseteq G$ iff there is
$H_1\subseteq H$ such that $G_1=(x_{\kappa})_{\kappa\in
H_1}$). 

An element $u\in E$ is called {\em finite\/}
if either $u=0$ or there is a finite sequence $\{a_1,a_2,\dots,a_n\}$ of not
necessarily different atoms of $E$ such that $u=a_1\oplus
a_2\oplus \dots\oplus a_n$. Note that any atom of $E$ is evidently finite. 
An element $v\in E$ is called {\em cofinite\/}
if $v'\in E$ is finite.

\oprava{Elements $x$ and $y$ of a lattice effect algebra $E$ are
called {\em compatible} ($x\leftrightarrow y$ for short) if 
$x\vee y=x\oplus(y\ominus(x\wedge y))$ 
(see \cite{Kop2,ZR56}).}

Remarkable sub-lattice effect algebras of 
a lattice effect algebra $E$ are
\begin{enumerate}
\item[(1)] A {\em block} $M$ of $E$, which is any maximal subset of pairwise compatible
elements of $E$ (in fact $M$ is a maximal sub-$MV$-algebra of $E$, see 
\cite{ZR56}).

\item[(2)] The {\em set} $\Sh(E)=\{x\in E\mid x\wedge x'=0\}$ 
{\em of sharp elements  of}  $E$ (see \cite{gudder1}, \cite{gudder2}), 
which is an orthomodular lattice (see \cite{ZR57}).

\item[(3)] The {\em compatibility center} $\B(E)$ of $E$, 
 $\B(E)=\bigcap\{M\subseteq E\mid M \ 
\mbox{is a block}$\ $\mbox{of}\ E\}=\{x\in E\mid x\comp y\ 
\mbox{for every}$ $y\in E\}$ which is in fact 
an $MV$-algebra ($MV$-effect algebra).
\item[(4)]  The {\em center} 
$\C(E)=\{ x\in E \mid y=(y\wedge x)\vee(y\wedge x')\ 
\mbox{for all}\ y\in E\}$ of $E$  is a Boolean algebra
(see \cite{GrFoPu}). In every lattice effect algebra 
it holds  $\C(E)=\B(E)\cap \Sh(E)=\Sh(\B(E))$ (see \cite{ZR51} and \cite{ZR52}).
\end{enumerate}

All these sub-lattice effect algebras of a lattice effect algebra $E$ 
are in fact  {\em full sub-lattice effect algebras} of $E$. 
This means that they are closed with respect to all suprema and infima 
existing in $E$ of their subsets \cite{ZR57,ZR60}. 

The  {\em $MV$-effect algebras} $E$ are precisely lattice 
effect algebras with a unique block (i.e., $E=\B(E)$).

The following statements are well known.

\begin{statement}\label{lem:2.1}
Let $E$ be a lattice effect algebra. 
Then
\begin{enumerate}
\item[\rm(i)]  {\rm \cite[Theorem 2.1]{ZR57}}\/ Assume $b\in E$,
$A\subseteq E$ are such that $\bigvee A$ exists in $E$ and $b\comp a$
for all $a\in A$. Then
\begin{enumerate}
\item[{\rm(a)}]
$b\comp \bigvee A$.
\item[{\rm(b)}]
$\bigvee\{b\wedge a: a\in A\}$ exists in $E$ and
equals $b\wedge(\bigvee A)$.
\end{enumerate}

\item[\rm(ii)] {\rm \cite[Theorem 3.7]{ZR57}, \cite[Theorem 2.8]{ZR60}}\/  %
$\Sh(E)$, $\B(E)$  and $\C(E)$
are full sub-lattice effect algebras of $E$.

\item[\rm(iii)] {\rm\cite[Lemma 3.3]{PR5}} Let  $x, y\in E$. 
Then $x\wedge y=0$ and $x\leq y'$ iff $kx\wedge ly=0$ and $kx\leq (ly)'$, 
whenever $kx$ and $ly$ exist in $E$.

\item[\rm(iv)] {\rm\cite[Proposition 1]{PR7x}} Let 
$\{b_{\alpha} \mid \alpha\in \Lambda\}$ 
be a family of elements in  $E$ and let 
$a\in E$ with $a\leq b_{\alpha}$ for all $\alpha\in \Lambda$. Then
$$
(\bigvee\{b_{\alpha} \mid \alpha\in \Lambda\})\ominus a=%
\bigvee\{b_{\alpha}\ominus a \mid \alpha\in \Lambda\}
$$
if one side is defined.

\item[\rm(v)] {\rm\cite[Theorem 3.5]{wujunde}} For every atom 
$a\in E$ with $ord(a) < \infty$, $n_aa$ is
the smallest sharp element over $a$.

\item [\rm(vi)] {\rm\cite[Corollary 4.3]{ZR52}} Let $x, y\in E$. 
Then $x\oplus y=(x\vee y)\oplus (x\wedge y)$ whenever  $x\oplus y$ exists.

\item [\rm(vii)] {\rm\cite[Proposition 1.8.7]{dvurec}} 
Let  $b\in E$, $A\subseteq E$ are such that $\bigvee A$ exists in $E$ and $b\oplus a$ 
exists for all $a\in A$. Then 
$\bigvee\{b\oplus a: a\in A\}=b\oplus \bigvee A$.

\item [\rm(viii)] {\rm\cite[Lemma 4.1]{ZR70}} Assume that 
$z\in \C(E)$. Then,  for 
all $x, y\in E$ with $x\leq y'$,  
$(x\oplus y)\wedge z = (x\wedge z)\oplus (y\wedge z)$.
\end{enumerate}
\end{statement}

\begin{statement}\label{lem:2.0}{\rm \cite[Theorem 3.3]{ZR65}} 
Let $E$ be an Archimedean atomic lattice effect
algebra. Then
to every nonzero
element $x\in E$ there are mutually distinct atoms $a_{\alpha}\in
E$ and positive integers $k_{\alpha}$, $\alpha\in{\cal E}$ such
that
$$
x=\bigoplus\{k_{\alpha}a_{\alpha}\mid
\alpha\in{\cal E}\}=
\bigvee\{k_{\alpha}a_{\alpha}\mid
\alpha\in{\cal E}\},
$$
and $x\in S(E)$ iff
$k_{\alpha}=n_{a_{\alpha}}=\ord(a_{\alpha})$ for all
$\alpha\in\cal E$.
\end{statement}

\begin{statement}\label{lem:2.2}{\rm\cite[Theorem 8]{mosna}} 
Let $E$ be an atomic Archimedean lattice effect
algebra and let $\mathcal{M}=\{M_\kappa|\kappa\in H\}$ be a family
of all atomic blocks of $E$. For each $\kappa\in H$ let $A_\kappa$
be the set of all atoms of $M_\kappa$. Then:
\begin{enumerate}
\item[\rm(i)] 
For each $\kappa\in H$, $A_\kappa$ is a maximal pairwise
compatible set of atoms of $E$.
\item[\rm(ii)]For $x\in E$ and $\kappa\in H$ it holds $x\in M_\kappa$
iff $x\leftrightarrow A_\kappa$.
\item[\rm(iii)] $M\in\mathcal{M}$ iff there exists a maximal pairwise
compatible set $A$ of atoms of $E$ such that $A\subseteq M$ and if
$M_1$ is a block of $E$ with $A\subseteq M_1$ then $M=M_1$.
\item[\rm(iv)] $E=\bigcup\{M_\kappa|\kappa\in H\}$.
\item[\rm(v)] $B(E)=\bigcap\{M_\kappa|\kappa\in H\}$.
\item[\rm(vi)] $C(E)=\bigcap\{C(M_\kappa)|\kappa\in H\}=\bigcap\{S(M_\kappa)|\kappa\in H\}$.
\item[\rm(vii)] $S(E)=\bigcup\{C(M_\kappa)|\kappa\in H\}=\bigcup\{S(M_\kappa)|\kappa\in H\}$.
\end{enumerate}
\end{statement}

\begin{lemma}\label{joyka}
Let $E$ be a lattice effect algebra and let 
$b\in E$, $A\subseteq E$ are such that $\bigvee A$ exists in $E$ and $b\oplus a$ 
exists for all $a\in A$. Then 
$b\oplus \bigvee A$ exists in $E$ and 
$b\oplus \bigvee A=(b\vee \bigvee A)\oplus \bigvee\{b\wedge a: a\in A\}$.
\end{lemma}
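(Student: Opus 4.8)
The plan is to build the existence of $b \oplus \bigvee A$ first, and then to compute it via the known ``modularity'' identity in Statement~\ref{lem:2.1}(vi). First I would observe that since $b \oplus a$ exists for every $a \in A$, we have $a \le b'$ for all $a \in A$, hence $\bigvee A \le b'$ because $E$ is a lattice and $b'$ is an upper bound of $A$; this gives that $b \oplus \bigvee A$ exists. Now apply Statement~\ref{lem:2.1}(vii) to the family $A$: since $\bigvee A$ exists and $b \oplus a$ exists for all $a \in A$, we get
\[
\bigvee\{b \oplus a : a \in A\} = b \oplus \bigvee A .
\]
So it remains only to identify $\bigvee\{b \oplus a : a \in A\}$ with $(b \vee \bigvee A) \oplus \bigvee\{b \wedge a : a \in A\}$.

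The key step is to rewrite each term $b \oplus a$ using Statement~\ref{lem:2.1}(vi): since $b \oplus a$ exists, $b \oplus a = (b \vee a) \oplus (b \wedge a)$. Thus $\bigvee\{b \oplus a : a \in A\} = \bigvee\{(b \vee a) \oplus (b \wedge a) : a \in A\}$, and I would like to ``pull apart'' the join over the two coordinates $b \vee a$ and $b \wedge a$. Here is where compatibility enters: $b \leftrightarrow a$ for each $a \in A$ (indeed $a \le b'$ forces compatibility), so by Statement~\ref{lem:2.1}(i) applied to $b$ and the set $A$, we get $b \leftrightarrow \bigvee A$, that $\bigvee\{b \wedge a : a \in A\}$ exists and equals $b \wedge \bigvee A$, and dually $\bigvee\{b \vee a : a \in A\} = b \vee \bigvee A$. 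Write $p = b \vee \bigvee A = \bigvee\{b \vee a\}$ and $q = b \wedge \bigvee A = \bigvee\{b \wedge a\}$; note $q \le \bigvee A \le b'$ and $q \le p$, and in fact $q \le p' $ should hold since $p \oplus q$ is exactly what we are trying to show exists (equivalently $b \wedge \bigvee A \le (b \vee \bigvee A)'$, which follows from $b \wedge a \le b' $ and $b \wedge a \le (b\vee a)'$ via Statement~\ref{lem:2.1}(vi) for the element $b \vee \bigvee A$ once we check $(b\vee\bigvee A)\oplus(b\wedge\bigvee A)$ is defined — but this equals $b \oplus \bigvee A$ by (vi) again, which we already know exists).

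The remaining task is the interchange: showing
\[
\bigvee_{a \in A}\bigl((b \vee a) \oplus (b \wedge a)\bigr) = \Bigl(\bigvee_{a\in A}(b\vee a)\Bigr) \oplus \Bigl(\bigvee_{a \in A}(b \wedge a)\Bigr) = p \oplus q .
\]
For ``$\le$'': each $(b\vee a)\oplus(b\wedge a) \le p \oplus q$ because $b \vee a \le p$, $b \wedge a \le q$, and $\oplus$ is monotone in each coordinate (using Statement~\ref{lem:2.1}(iv)/(vii) to compare); so $p \oplus q$ is an upper bound. For ``$\ge$'': I would use Statement~\ref{lem:2.1}(iv) to write $(b \oplus \bigvee A) \ominus q = \bigvee\{(b\oplus a) \ominus q\}$ — wait, more carefully, since $q = b \wedge \bigvee A \le b \oplus a$ is not automatic; instead I would first note $b \oplus a = (b\vee a)\oplus (b\wedge a) \ge b \wedge a$, and work with $q$ restricted suitably, or better: apply Statement~\ref{lem:2.1}(vii) a second time, now with the element $q$ and the set $\{b \vee a : a \in A\}$, whose join is $p$; we must check $q \oplus (b \vee a)$ exists for each $a$, i.e. $b \vee a \le q'$. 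This last inequality is the main obstacle, and I expect to verify it by the dual of Statement~\ref{lem:2.1}(i): since $b \leftrightarrow \bigvee A$ and everything lies in the compatible (hence $MV$-algebra) sub-effect algebra generated by $b$ and $\bigvee A$, the identity $p \oplus q = b \oplus \bigvee A$ holds by a direct $MV$-algebra computation ($b \vee c = b \oplus (c \ominus (b\wedge c))$ gives $b \oplus c = (b\vee c) \oplus (b\wedge c)$ for $c = \bigvee A$, which is exactly Statement~\ref{lem:2.1}(vi) applied to $b$ and $\bigvee A$). In fact this observation short-circuits everything: $b \oplus \bigvee A$ exists, and by Statement~\ref{lem:2.1}(vi) with $x = b$, $y = \bigvee A$ it equals $(b \vee \bigvee A) \oplus (b \wedge \bigvee A)$, and then Statement~\ref{lem:2.1}(i)(b) rewrites $b \wedge \bigvee A = \bigvee\{b \wedge a : a \in A\}$, yielding the claim. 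So the genuine content is just: existence of $\bigvee A \le b'$, the single application of Statement~\ref{lem:2.1}(vi), and the single application of Statement~\ref{lem:2.1}(i)(b); the interchange heuristics above are the fallback if one prefers to avoid invoking (vi) directly, and the only delicate point there is the compatibility bookkeeping that makes $p \oplus q$ well defined.
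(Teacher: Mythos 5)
Your final, short-circuited argument --- existence of $b\oplus\bigvee A$ from $\bigvee A\le b'$, one application of Statement~\ref{lem:2.1}(vi) to the pair $b$, $\bigvee A$, and one application of Statement~\ref{lem:2.1}(i)(b) to rewrite $b\wedge\bigvee A$ as $\bigvee\{b\wedge a: a\in A\}$ --- is exactly the paper's proof. The lengthy join-interchange detour in the middle is unnecessary, as you yourself conclude, and can simply be dropped.
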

\begin{proof} Clearly $b\comp a$
for all $a\in A$. By Statement \ref{lem:2.1}, (i) we have that 
$b\comp \bigvee A$ and $\bigvee\{b\wedge a: a\in A\}=b\wedge(\bigvee A)$. 
Furthermore, $b\leq a'$ for all  $a\in A$ and hence 
$b\leq \bigwedge\{a' \mid a\in A\}= (\bigvee A)'$. Therefore 
$b\oplus \bigvee A$ exists. In view of Statement \ref{lem:2.1}, (vi) 
$$
b\oplus \bigvee A=(b\vee \bigvee A)\oplus (b\wedge \bigvee A)=%
(b\vee \bigvee A)\oplus \bigvee\{b\wedge a: a\in A\}.
$$
\end{proof}

\section{Bifull sub-lattice effect algebras of  lattice effect algebras}
\label{Atomicity}

\begin{definition}\label{bifdf}{\rm
For a poset $L$ and a subset $D\subseteq L$ we say that 
$D$ is a {\em  $\bigvee$-bifull sub-poset of} $L$  iff, 
for any $X\subseteq D$ , $\bigvee_L X$ exists iff \, 
$\bigvee_D X$ exists, in which case $\bigvee_L X=\bigvee_D X$. 
Dually, the notion of {\em  $\bigwedge$-bifull sub-poset of}  $L$  
is defined. We call a subset $D\subseteq L$ to be 
a {\em  bifull sub-poset of} $L$  if it is both $\bigvee$-bifull and 
$\bigwedge$-bifull.}
\end{definition}

\begin{remark}\label{combif}{\rm Clearly, if 
$L$ is a complete lattice  then $D\subseteq L$ is
a complete sub-lattice of $L$ (i.e., $D$ inherits all suprema 
and infima of its subsets existing in $L$) iff 
$D$ is a bifull sub-poset of $L$. Moreover, if 
$E$  is a lattice effect algebra then a 
sub-lattice effect algebra $D$ of $E$ is a 
{\em bifull sub-lattice effect algebra} of $E$ iff it 
is $\bigvee$-bifull.
}
\end{remark}

An important class of effect algebras 
was introduced by S. Gudder in \cite{gudder1} 
and \cite{gudder2}. Fundamental example is the 
standard Hilbert spaces effect algebra ${\cal E}({\mathcal H})$.

For an element $x$ of an effect algebra $E$ we denote
$$
\begin{array}{r c  l c l}
\widetilde{x}&=\bigvee_{E}\{s\in \Sh(E) \mid s\leq x\}&%
\phantom{xxxxx}&\text{if it exists and belongs to}\ \Sh(E)\phantom{.}\\
\widehat{x}&=\bigwedge_{E}\{s\in \Sh(E) \mid s\geq x\}&\phantom{xxxxx}&%
\text{if it exists and belongs to}\ \Sh(E).
\end{array}
$$

\begin{definition} {\rm (\cite{gudder1},  \cite{gudder2}.) 
An effect algebra $(E, \oplus, 0,
1)$ is called {\em sharply dominating} if for every $x\in E$ there
exists $\widehat{x}$, the smallest sharp element  such that $x\leq
\widehat{x}$. That is $\widehat{x}\in \Sh(E)$ and if $y\in \Sh(E)$ satisfies
$x\leq y$ then $\widehat{x}\leq y$. }
\end{definition}

Recall that evidently an effect algebra $E$ is sharply dominating iff 
for every $x\in E$ there exists $\widetilde{x}\in \Sh(E)$ such
that $\widetilde{x}\leq x$ and if $u\in \Sh(E)$
satisfies $u\leq x$ then $u\leq \widetilde{x}$ iff for every $x\in E$ there exist 
a smallest sharp element $\widehat{x}$ over $x$ and a greatest sharp 
element $\widetilde{x}$ below $x$.

In what follows set (see \cite{jenca,wujunde})
$$\Mea(E)=\{x\in E \mid\ \text{if}\ v\in \Sh(E)\ \text{satisfies}\ v\leq x\ 
\text{then}\ v=0\}.$$

An element $x\in \Mea(E)$ is called {\em meager}. Moreover, $x\in \Mea(E)$ 
iff $\widetilde{x}=0$. Recall that $x\in \Mea(E)$, $y\in E$, $y\leq x$ implies 
$y\in \Mea(E)$ and $x\ominus y\in \Mea(E)$.

{\renewcommand{\labelenumi}{{\normalfont  (\roman{enumi})}}
\begin{lemma}\label{jmpy2} Let $E$ be an effect algebra 
in which $\Sh(E)$ is a sub-effect algebra of $E$ and 
let $x\in \Mea(E)$ such that $\widehat{x}$  exists. Then 
\begin{enumerate}
\settowidth{\leftmargin}{(iiiii)}
\settowidth{\labelwidth}{(iii)}
\settowidth{\itemindent}{(ii)}
\item $\widehat{x}\ominus x\in \Mea(E)$.
\item If $y\in \Mea(E)$ such that $x\oplus y$ %
exists and $x\oplus y=z\in\Sh(E)$ then  $\widehat{x}=z$. 
Moreover, if $E$ is a lattice effect algebra then $\widehat{y}$ 
exists and $\widehat{y}=\widehat{\widehat{x}\ominus x}=z$.
\end{enumerate}
\end{lemma}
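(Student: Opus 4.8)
The plan is to establish (i) first and then use it, together with the uniqueness principle for smallest sharp elements, to get (ii). For (i): since $\widehat{x}\in\Sh(E)$ and $\Sh(E)$ is a sub-effect algebra, $\widehat{x}\,'\in\Sh(E)$, and because $\Sh(E)$ is closed under $\oplus$ when it is a sub-effect algebra, I expect that showing $\widehat{x}\ominus x\in\Mea(E)$ reduces to the following: if $v\in\Sh(E)$ with $v\le\widehat{x}\ominus x$, then $v\oplus x\le\widehat{x}$, so $x\le v\,'$, hence $x\le(\widehat{x}\ominus v)$; but $\widehat{x}\ominus v=\widehat{x}\wedge v\,'$ lies in $\Sh(E)$ (it is $\widehat x \oplus v$'s "complement" relative to $\widehat x$, and $\Sh(E)$ being a sub-effect algebra is closed under the induced $\ominus$ on the interval $[0,\widehat x]$ — this needs the fact that $[0,s]\cap\Sh(E)$ is a sub-effect algebra of $[0,s]$ for $s$ sharp, which follows from $\Sh(E)$ being a full sub-effect algebra). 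Then $\widehat{x}\ominus v$ is a sharp element above $x$ that is below $\widehat{x}$, so by minimality of $\widehat{x}$ we get $\widehat{x}\ominus v=\widehat{x}$, forcing $v=0$. Hence $\widehat{x}\ominus x$ is meager.

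For the first assertion of (ii): suppose $y\in\Mea(E)$, $x\oplus y$ exists and equals $z\in\Sh(E)$. Then $z$ is a sharp element with $x\le z$ (since $x\le x\oplus y=z$), so $\widehat{x}\le z$ by definition of $\widehat{x}$. Conversely I must show $z\le\widehat{x}$. From $\widehat{x}\le z$ we get $z\ominus\widehat{x}$ exists, and $z\ominus\widehat{x}\le z\ominus x=y\in\Mea(E)$ (using $x\le\widehat x$ and the order-reversing behaviour of $\ominus$ in the subtracted argument, Statement~\ref{lem:2.1}(iv) style manipulations), hence $z\ominus\widehat{x}\in\Mea(E)$. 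But $z\ominus\widehat{x}=z\wedge\widehat{x}\,'$ is also sharp by the same sub-effect-algebra argument as in (i) (both $z$ and $\widehat x$ are sharp, so $z\ominus\widehat x\in\Sh(E)$). A meager sharp element is $0$, so $z=\widehat{x}$.

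For the final "moreover" clause, assume $E$ is a lattice effect algebra. The claim is $\widehat{y}$ exists and equals $\widehat{\widehat{x}\ominus x}=z$. By (i) applied with $x$ replaced by $x$, we already know $\widehat x\ominus x\in\Mea(E)$; I want to compare $y$ and $\widehat x\ominus x$. Since $x\oplus y=z=\widehat x$ (just proved), and also $x\le\widehat x$ with $\widehat x\ominus x$ being the canonical complement of $x$ in $[0,\widehat x]$, uniqueness of the $\oplus$-complement in an effect algebra gives $y=\widehat x\ominus x=z\ominus x$ directly — so in fact $y$ and $\widehat x\ominus x$ are literally the same element. Thus $\widehat y=\widehat{\widehat x\ominus x}$, and it remains to compute this common value. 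Apply the first part of (ii) to the meager element $\widehat x\ominus x$: we need a meager $w$ with $(\widehat x\ominus x)\oplus w$ sharp; take $w=x$ (meager by hypothesis, i.e. $x\in\Mea(E)$), and then $(\widehat x\ominus x)\oplus x=\widehat x=z\in\Sh(E)$, so by (ii) the smallest sharp element over $\widehat x\ominus x$ is $\widehat x=z$. Hence $\widehat y=z$.

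**Anticipated main obstacle.** The delicate point is the repeated use of the fact that for a sharp element $s$, the interval $[0,s]$ meets $\Sh(E)$ in a sub-effect algebra, so that $s\ominus t\in\Sh(E)$ whenever $t\le s$ and $t\in\Sh(E)$ — equivalently that $\Sh(E)$ being a sub-effect algebra is enough to guarantee this relative complementation stays sharp. This is where the hypothesis "$\Sh(E)$ is a sub-effect algebra of $E$" is doing real work, and I would make sure to phrase the argument so it only invokes the sub-effect-algebra axioms (closure under the ternary relation $a\oplus b=c$) rather than accidentally assuming fullness. Everything else is bookkeeping with the partial order, $\ominus$, and the defining minimality property of $\widehat{\,\cdot\,}$.
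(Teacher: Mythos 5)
Your arguments for (i) and for the first assertion of (ii) are correct and essentially identical to the paper's: in (i) the key point is that $\widehat{x}\ominus v\in\Sh(E)$ because $v\oplus(\widehat{x}\ominus v)=\widehat{x}$ with two of the three terms sharp (so the plain sub-effect-algebra axiom suffices; neither fullness nor the identity $\widehat{x}\ominus v=\widehat{x}\wedge v'$, which you should avoid since meets need not exist in a general effect algebra, is needed); in (ii) the observation that $z\ominus\widehat{x}$ is a sharp element below the meager $y$ is exactly the paper's cancellation argument. Your identification $y=\widehat{x}\ominus x$ by cancellation is also fine.

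The gap is in the final clause. You derive ``$\widehat{y}$ exists and equals $z$'' by applying the first assertion of (ii) with the roles of the two meager summands swapped ($\widehat{x}\ominus x$ in the role of $x$, and $x$ in the role of $y$). But that assertion is proved under the standing hypothesis that the hat of the first element exists: your own proof of it begins with ``$\widehat{x}\le z$ by minimality of $\widehat{x}$,'' which presupposes $\widehat{x}$. Applied in the swapped roles, it presupposes the existence of $\widehat{\widehat{x}\ominus x}=\widehat{y}$ --- which is precisely what the ``moreover'' clause asks you to prove, and which is not automatic in a lattice effect algebra that is not sharply dominating. Note that your argument never uses the lattice hypothesis, even though the statement explicitly adds it for this clause; that is the signal that something is missing. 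The missing step (the paper's argument) is: for any $u\in\Sh(E)$ with $u\ge y$, form $u\wedge z\in\Sh(E)$; then $z=y\oplus((u\wedge z)\ominus y)\oplus(z\ominus(u\wedge z))$ together with $z=x\oplus y$ gives $x=((u\wedge z)\ominus y)\oplus(z\ominus(u\wedge z))$, so $z\ominus(u\wedge z)$ is a sharp element below the meager $x$, hence $0$, hence $z=u\wedge z\le u$. This exhibits $z$ as the minimum of all sharp elements above $y$, establishing existence and the value of $\widehat{y}$ simultaneously.
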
}
\begin{proof} (i): Let $u\in\Sh(E)$ such that $u\leq \widehat{x}\ominus x$. 
Then $x\leq \widehat{x}\ominus u\in\Sh(E)$ which yields that 
$\widehat{x}\leq \widehat{x}\ominus u$. Hence $u=0$, i.e., 
$\widehat{x}\ominus x\in \Mea(E)$.

\noindent{}(ii): Since $x\leq z$ and hence $\widehat{x}\leq z$ we have 
$x\oplus y=z=\widehat{x}\oplus (z\ominus \widehat{x})$ and 
$\widehat{x}=x\oplus (\widehat{x}\ominus x)$. This yields 
$x\oplus y=x\oplus (\widehat{x}\ominus x)\oplus (z\ominus \widehat{x})$. 
By the cancellation law we get 
$y=(\widehat{x}\ominus x)\oplus %
\underbrace{(z\ominus \widehat{x})}_{\in\Sh(E)}$. Hence 
$z\ominus \widehat{x}=0$, i.e., $z=\widehat{x}$.

Now, assume that $E$ is a lattice effect algebra. Let $u\in\Sh(E)$, 
$u\geq y$. Then also $u\wedge z\geq y$, $u\wedge z\in \Sh(E)$ and 
$z\ominus (u\wedge z)\in \Sh(E)$. Then 
$x\oplus y=z=y \oplus ((u\wedge z)\ominus y)\oplus (z\ominus (u\wedge z))$. 
Therefore $x=((u\wedge z)\ominus y)\oplus %
(z\ominus (u\wedge z))$. 
Since $z\ominus (u\wedge z)\in\Sh(E)$ this 
yields $z\ominus (u\wedge z)=0$, i.e., 
$z=u\wedge z\leq u$.
\end{proof}

{\renewcommand{\labelenumi}{{\normalfont  (\roman{enumi})}}
\begin{lemma}\label{jpy2} Let $E$ be an effect algebra 
in which $\Sh(E)$ is a sub-effect algebra of $E$ and 
let $x\in E$ such that $\widetilde{x}$  
exists. Then 
\begin{enumerate}
\settowidth{\leftmargin}{(iiiii)}
\settowidth{\labelwidth}{(iii)}
\settowidth{\itemindent}{(ii)}
\item  ${x\ominus \widetilde{x}}\in  \Mea(E)$ and 
$x=\widetilde{x}\oplus {(x\ominus \widetilde{x})}$ is 
the unique decomposition $x = x_S \oplus x_M$, 
where $x_S\in\Sh(E)$ and $x_M \in \Mea(E)$. Moreover, 
$x_S\wedge x_M=0$ and if $E$ is a lattice effect algebra then 
 $x = x_S \vee x_M$. 
\item If $E$ is a lattice effect algebra such that $\widehat{x}$ exists then 
$\widehat{x\ominus \widetilde{x}}$ and  $\widehat{\widehat{x}\ominus {x}}$ exist,  
$\widehat{x}\ominus \widetilde{x}=\widehat{x\ominus \widetilde{x}}=\widehat{\widehat{x}\ominus {x}}$,
$\widehat{x}=\widetilde{x}\oplus \widehat{x\ominus \widetilde{x}}=%
\widetilde{x}\vee \widehat{x\ominus \widetilde{x}}$  and 
$\widetilde{x}\wedge \widehat{x\ominus \widetilde{x}}=0$. 
Moreover, $\widehat{x}\ominus \widetilde{x}=\widehat{(\widehat{x}\ominus \widetilde{x})\ominus %
(\widehat{x}\ominus {x})}$.
\end{enumerate}
\end{lemma}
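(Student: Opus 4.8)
The plan is to deduce both parts from Lemma~\ref{jmpy2} together with part~(i) of the present lemma, applied to the meager element $x\ominus\widetilde{x}$. First I would establish (i). Since $\widetilde{x}\in\Sh(E)$ and $\Sh(E)$ is a sub-effect algebra, the complement structure gives $x\ominus\widetilde{x}\in E$; to see it is meager, let $u\in\Sh(E)$ with $u\le x\ominus\widetilde{x}$. Then $\widetilde{x}\oplus u$ is defined and lies in $\Sh(E)$ (sub-effect algebra), and $\widetilde{x}\oplus u\le x$, so by the defining property of $\widetilde{x}$ we get $\widetilde{x}\oplus u\le\widetilde{x}$, whence $u=0$. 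For uniqueness of the decomposition $x=x_S\oplus x_M$ with $x_S\in\Sh(E)$, $x_M\in\Mea(E)$: from $x_S\le x$ and $x_S\in\Sh(E)$ we get $x_S\le\widetilde{x}$, so $\widetilde{x}=x_S\oplus(\widetilde{x}\ominus x_S)$; substituting into $x=\widetilde{x}\oplus(x\ominus\widetilde{x})=x_S\oplus x_M$ and cancelling $x_S$ yields $x_M=(\widetilde{x}\ominus x_S)\oplus(x\ominus\widetilde{x})$, and since $\widetilde{x}\ominus x_S\in\Sh(E)$ (sharp elements form a sub-effect algebra) and $x_M\in\Mea(E)$, a sharp summand below a meager element must be $0$, forcing $x_S=\widetilde{x}$ and $x_M=x\ominus\widetilde{x}$. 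For $x_S\wedge x_M=0$: any sharp lower bound of $x_M$ is $0$ by meagerness, but $\widetilde{x}\wedge x_M$ need not a priori be sharp; instead argue $\widetilde{x}\wedge(x\ominus\widetilde{x})\le\widetilde{x}$ and $\widetilde{x}\in\Sh(E)$, and use Statement~\ref{lem:2.1}(i) or a direct computation — here one uses that $\widetilde{x}\le(x\ominus\widetilde{x})'$ so $\widetilde{x}$ and $x\ominus\widetilde{x}$ are "orthogonal", giving $\widetilde{x}\wedge(x\ominus\widetilde{x})=0$ (this is the standard fact that $a\le b'$ together with one of them sharp forces $a\wedge b=0$; formally invoke that a common lower bound $w\le\widetilde{x}\wedge(x\ominus\widetilde{x})$ satisfies $w\le\widetilde{x}\le w'$ in the orthomodular lattice $\Sh(E)$ after replacing $w$ by $\widehat{w}$, or cite Statement~\ref{lem:2.1}(iii)). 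Finally, in the lattice case, $x=x_S\vee x_M$ follows from Statement~\ref{lem:2.1}(vi): $x_S\oplus x_M=(x_S\vee x_M)\oplus(x_S\wedge x_M)=(x_S\vee x_M)\oplus 0$.

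For (ii), assume $E$ is a lattice effect algebra and $\widehat{x}$ exists. Put $x_M=x\ominus\widetilde{x}\in\Mea(E)$ from part~(i). The key observation is that $\widehat{x}\ominus\widetilde{x}$ is a sharp element (since $\widetilde{x},\widehat{x}\in\Sh(E)$ and $\widetilde{x}\le x\le\widehat{x}$, and $\Sh(E)$ is a sub-effect algebra) which lies above $x_M$: indeed $\widetilde{x}\oplus x_M=x\le\widehat{x}=\widetilde{x}\oplus(\widehat{x}\ominus\widetilde{x})$, so cancelling $\widetilde{x}$ gives $x_M\le\widehat{x}\ominus\widetilde{x}$. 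To see it is the \emph{smallest} such, suppose $v\in\Sh(E)$ with $x_M\le v$; then $\widetilde{x}\oplus ?$ — more carefully, $\widetilde{x}$ and $v$ are both sharp, but $\widetilde{x}\oplus v$ need not be defined, so instead take $\widetilde{x}\vee v$: one shows $x=\widetilde{x}\oplus x_M\le\widetilde{x}\vee v$ and $\widetilde{x}\vee v\in\Sh(E)$ (join of two sharp elements in the orthomodular lattice $\Sh(E)$, which is full by Statement~\ref{lem:2.1}(ii)), hence $\widehat{x}\le\widetilde{x}\vee v$, and then $\widehat{x}\ominus\widetilde{x}\le(\widetilde{x}\vee v)\ominus\widetilde{x}\le v$ using that $v$ absorbs $(\widetilde{x}\vee v)\ominus\widetilde{x}$ in an orthomodular lattice. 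This establishes $\widehat{x_M}=\widehat{x\ominus\widetilde{x}}=\widehat{x}\ominus\widetilde{x}$. The identity $\widehat{\widehat{x}\ominus x}=\widehat{x}\ominus\widetilde{x}$ then follows by applying Lemma~\ref{jmpy2}(ii) with the meager elements $x_M$ and $\widehat{x}\ominus x$ (the latter meager by Lemma~\ref{jmpy2}(i)), whose sum $x_M\oplus(\widehat{x}\ominus x)=(x\ominus\widetilde{x})\oplus(\widehat{x}\ominus x)$ equals the sharp element $\widehat{x}\ominus\widetilde{x}$: the conclusion of that lemma gives both $\widehat{x_M}$ and $\widehat{\widehat{x}\ominus x}$ equal to $\widehat{x}\ominus\widetilde{x}$. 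The decomposition $\widehat{x}=\widetilde{x}\oplus\widehat{x\ominus\widetilde{x}}=\widetilde{x}\vee\widehat{x\ominus\widetilde{x}}$ with $\widetilde{x}\wedge\widehat{x\ominus\widetilde{x}}=0$ is then just part~(i) applied to the element $\widehat{x}$, since $\widetilde{\widehat{x}}=\widehat{x}$ is impossible unless... — rather, observe directly: $\widetilde{x}\le\widehat{x\ominus\widetilde{x}}{}'$ because $\widetilde{x}\oplus\widehat{x\ominus\widetilde{x}}=\widetilde{x}\oplus(\widehat{x}\ominus\widetilde{x})=\widehat{x}$ is defined, both summands are sharp, so their $\oplus$ equals their $\vee$ by Statement~\ref{lem:2.1}(vi) and the meet is $0$ by the orthogonality argument as in part~(i).

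The last assertion $\widehat{x}\ominus\widetilde{x}=\widehat{(\widehat{x}\ominus\widetilde{x})\ominus(\widehat{x}\ominus x)}$ is obtained by noticing $(\widehat{x}\ominus\widetilde{x})\ominus(\widehat{x}\ominus x)=x\ominus\widetilde{x}=x_M$ (a routine cancellation, valid since $\widehat{x}\ominus x\le\widehat{x}\ominus\widetilde{x}$ as $\widetilde{x}\le x$), and then applying the already-proved identity $\widehat{x_M}=\widehat{x}\ominus\widetilde{x}$.

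I expect the main obstacle to be the careful handling of the meets $\widetilde{x}\wedge(x\ominus\widetilde{x})=0$ and, in (ii), the minimality argument for $\widehat{x}\ominus\widetilde{x}$ over $x_M$: the naive approach via $\oplus$ fails because sums of the relevant sharp elements need not be defined, so one must pass to joins inside the orthomodular lattice $\Sh(E)$ and exploit fullness (Statement~\ref{lem:2.1}(ii)) together with the orthomodular law; everything else is cancellation and bookkeeping, much of it already packaged in Lemma~\ref{jmpy2}.
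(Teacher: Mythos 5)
Part (i) of your argument is correct and essentially the paper's: meagerness of $x\ominus\widetilde{x}$, uniqueness via $x_M=(\widetilde{x}\ominus x_S)\oplus(x\ominus\widetilde{x})$ with the sharp summand $\widetilde{x}\ominus x_S$ forced to be $0$, the meet $x_S\wedge x_M=0$ from $x_M\le x_S'$ and sharpness of $x_S$, and $x=x_S\vee x_M$ from Statement \ref{lem:2.1}, (vi). In part (ii), your idea of obtaining $\widehat{\widehat{x}\ominus x}=\widehat{x}\ominus\widetilde{x}$ by feeding the decomposition $(x\ominus\widetilde{x})\oplus(\widehat{x}\ominus x)=\widehat{x}\ominus\widetilde{x}$ into Lemma \ref{jmpy2}, (ii) is a legitimate shortcut (the paper instead repeats the minimality computation), but it only works once $\widehat{x\ominus\widetilde{x}}$ has been identified.

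That identification is where you have a genuine gap, precisely at the step you flagged as delicate. Given sharp $v\ge x\ominus\widetilde{x}$, you pass to $\widetilde{x}\vee v\in\Sh(E)$, deduce $\widehat{x}\le\widetilde{x}\vee v$, and then claim $(\widetilde{x}\vee v)\ominus\widetilde{x}\le v$ because ``$v$ absorbs'' that difference in the orthomodular lattice $\Sh(E)$. Inside $\Sh(E)$ this inequality reads $(\widetilde{x}\vee v)\wedge\widetilde{x}\,'\le v$, which is false in a general orthomodular lattice unless $\widetilde{x}$ and $v$ commute: in $\mathrm{MO}_2$ with non-orthogonal atoms $a,v$ one has $(a\vee v)\wedge a'=1\wedge a'=a'\not\le v$ (orthomodularity only gives $\widetilde{x}\vee\bigl((\widetilde{x}\vee v)\wedge\widetilde{x}\,'\bigr)=\widetilde{x}\vee v$, not the absorption you need). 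Nothing in the hypotheses makes $v$ commute with $\widetilde{x}$, since $v$ is an arbitrary sharp upper bound of the meager element $x\ominus\widetilde{x}$. The repair is to go down rather than up: put $w=v\wedge(\widehat{x}\ominus\widetilde{x})$. Then $w\in\Sh(E)$, $w\ge x\ominus\widetilde{x}$, and --- this is the whole point of meeting with $\widehat{x}\ominus\widetilde{x}$ --- $w\le\widehat{x}\ominus\widetilde{x}\le\widetilde{x}\,'$, so $w\oplus\widetilde{x}$ is defined, sharp, and $\ge(x\ominus\widetilde{x})\oplus\widetilde{x}=x$; hence $w\oplus\widetilde{x}\ge\widehat{x}=(\widehat{x}\ominus\widetilde{x})\oplus\widetilde{x}$, and cancellation gives $v\ge w\ge\widehat{x}\ominus\widetilde{x}$. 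This is exactly how the paper argues, both here and in its parallel treatment of $\widehat{\widehat{x}\ominus x}$. The remainder of your part (ii) (the decomposition of $\widehat{x}$ and the final identity via $(\widehat{x}\ominus\widetilde{x})\ominus(\widehat{x}\ominus x)=x\ominus\widetilde{x}$) is fine.
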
}
\begin{proof} (i): Let $v\in \Sh(E)$, $v\leq x\ominus \widetilde{x}$. Then 
$v\oplus \widetilde{x}\leq x$ and $v\oplus \widetilde{x}\in \Sh(E)$. 
Hence $v\oplus \widetilde{x}\leq \widetilde{x}$, i.e., $v=0$, 
$x_M \in \Mea(E)$ and 
$x=\widetilde{x}\oplus ({x\ominus \widetilde{x}})$. 

Assume that there is a decomposition $x = x_S \oplus x_M$ such that  
$x_S\in\Sh(E)$ and $x_M \in \Mea(E)$. Then $x_S\leq \widetilde{x}$ and 
$x_M=x \ominus x_S= %
(x \ominus \widetilde{x})\oplus  (\widetilde{x} \ominus x_S)\geq %
\widetilde{x} \ominus x_S\in \Sh(E)$. It follows that 
$\widetilde{x} \ominus x_S=0$ because $x_M \in \Mea(E)$. 
Therefore, $x_M=x \ominus \widetilde{x}$ and $x_S=\widetilde{x}$.

We have $x^{'}_S=1\ominus x_S\geq x \ominus x_S= x_M$. 
Hence $x_S\wedge x_M\leq x_S\wedge x^{'}_S=0$.

Let $E$ be a lattice effect algebra. By Statement
\ref{lem:2.1}, (vi) we have that 
$x = x_S \oplus x_M= (x_S \vee x_M)\oplus (x_S \wedge x_M)=x_S \vee x_M$.

\noindent{}(ii): We have that $\widehat{x}\ominus \widetilde{x}\geq 
x\ominus \widetilde{x}$, $\widehat{x}\ominus \widetilde{x}\in \Sh(E)$. 
Let $z\in \Sh(E)$ such that 
$z\geq x\ominus \widetilde{x}$. Let us 
put $w=z\wedge (\widehat{x}\ominus \widetilde{x})$. Then 
$w\leq \widehat{x}\ominus \widetilde{x}$ hence 
$w\oplus  \widetilde{x}\in \Sh(E)$ exists and 
$w\oplus  \widetilde{x}\geq x$. This yields that 
$w\oplus  \widetilde{x}\geq \widehat{x}$, i.e., 
$z\geq w\geq \widehat{x}\ominus  \widetilde{x}$.
Therefore, $\widehat{x}\ominus  \widetilde{x}=\widehat{x\ominus \widetilde{x}}$.

Since $\widehat{x}\ominus \widetilde{x}\leq 1\ominus \widetilde{x}=(\widetilde{x})'$ we obtain that 
$\widetilde{x}\wedge \widehat{x\ominus \widetilde{x}}\leq \widetilde{x}\wedge (\widetilde{x})'=0$.

We proceed similarly to prove that 
$\widehat{x}\ominus \widetilde{x}=\widehat{\widehat{x}\ominus {x}}$. 
Evidently, $\widehat{x}\ominus \widetilde{x}\geq {\widehat{x}\ominus {x}}$. 
Let $z\in \Sh(E)$ such that $z\geq {\widehat{x}\ominus {x}}$. We put 
$w=z\wedge (\widehat{x}\ominus \widetilde{x})$. Then 
${\widehat{x}\ominus {x}}\leq w\leq \widehat{x}\ominus \widetilde{x}\leq \widehat{x}$. 
It follows that ${\widehat{x}\ominus w \leq x}$ and ${\widehat{x}\ominus w \in \Sh(E)}$ 
which yields that 
${\widehat{x}\ominus w \leq \widetilde{x}}$. Hence 
${\widehat{x}\ominus \widetilde{x}}\leq w\leq z$.

Moreover, $\widehat{(\widehat{x}\ominus \widetilde{x})\ominus %
(\widehat{x}\ominus {x})}=\widehat{x\ominus \widetilde{x}}=\widehat{x}\ominus \widetilde{x}$.

\end{proof}

As proved in \cite{cattaneo}, 
$\Sh(E)$ is always a sub-effect algebra in 
a sharply dominating  effect algebra $E$.

\begin{corollary}\label{gejza}{\rm{}\cite[Proposition 15]{jenca}}
Let $E$ be a sharply dominating  effect algebra. 
Then every $x \in E$ has a
unique decomposition $x = x_S \oplus x_M$, where $x_S\in\Sh(E)$ and $x_M \in \Mea(E)$, 
namely $x=\widetilde{x}\oplus {(x\ominus \widetilde{x})}$.
\end{corollary}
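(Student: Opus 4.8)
The plan is to derive Corollary~\ref{gejza} as a direct consequence of the quoted result of \cite{cattaneo} together with Lemma~\ref{jpy2}(i). The key observation is that the hypothesis ``$\Sh(E)$ is a sub-effect algebra of $E$'', which is needed to invoke Lemma~\ref{jpy2}(i), is automatically satisfied in a sharply dominating effect algebra by the cited theorem of Cattaneo. Once this is in place, essentially nothing remains to prove: Lemma~\ref{jpy2}(i) already asserts both the existence and the uniqueness of the decomposition, together with the explicit formula $x=\widetilde{x}\oplus(x\ominus\widetilde{x})$.

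First I would note that since $E$ is sharply dominating, for every $x\in E$ the greatest sharp element $\widetilde{x}$ below $x$ exists (this is the equivalent formulation of sharp domination recorded just after the definition in the excerpt, obtained by applying the ``$\widehat{\ }$'' formulation to $x'$ and passing to complements). Hence the hypothesis of Lemma~\ref{jpy2} that $\widetilde{x}$ exists is met for all $x\in E$. Second, by \cite{cattaneo} the set $\Sh(E)$ is a sub-effect algebra of $E$, so the standing hypothesis of Lemma~\ref{jpy2} is also met. Therefore Lemma~\ref{jpy2}(i) applies to every $x\in E$, giving that $x_M:=x\ominus\widetilde{x}\in\Mea(E)$, that $x=\widetilde{x}\oplus(x\ominus\widetilde{x})$ is a decomposition of the required form with $x_S:=\widetilde{x}\in\Sh(E)$, and that this decomposition is the unique one with $x_S\in\Sh(E)$ and $x_M\in\Mea(E)$.

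There is essentially no obstacle here; the only thing to be careful about is making sure the two hypotheses of Lemma~\ref{jpy2} are genuinely available in the sharply dominating setting, and in particular citing Cattaneo's result for the sub-effect-algebra property rather than assuming it. The proof can therefore be written in two or three lines: invoke sharp domination to get existence of $\widetilde{x}$, invoke \cite{cattaneo} to get that $\Sh(E)$ is a sub-effect algebra, and then quote Lemma~\ref{jpy2}(i).

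\begin{proof}
Since $E$ is sharply dominating, for each $x\in E$ the greatest sharp element $\widetilde{x}$ below $x$ exists. By \cite{cattaneo}, $\Sh(E)$ is a sub-effect algebra of $E$. Hence Lemma~\ref{jpy2}(i) applies to every $x\in E$ and yields both the existence of the decomposition $x=\widetilde{x}\oplus(x\ominus\widetilde{x})$, with $\widetilde{x}\in\Sh(E)$ and $x\ominus\widetilde{x}\in\Mea(E)$, and its uniqueness among decompositions $x=x_S\oplus x_M$ with $x_S\in\Sh(E)$ and $x_M\in\Mea(E)$.
\end{proof}
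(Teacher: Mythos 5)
Your proof is correct and is exactly the derivation the paper intends: the remark that $\Sh(E)$ is a sub-effect algebra of a sharply dominating effect algebra (citing Cattaneo) is placed immediately before the corollary precisely so that Lemma~\ref{jpy2}(i) can be applied, with existence of $\widetilde{x}$ supplied by the equivalent formulation of sharp domination. Nothing is missing.
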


Moreover, the following statement holds. 

{\renewcommand{\labelenumi}{{\normalfont  (\roman{enumi})}}
\begin{statement}\label{shdisbifullcv} Let $E$ be a lattice effect algebra. Then 
\begin{enumerate}
\settowidth{\leftmargin}{(iiiii)}
\settowidth{\labelwidth}{(iii)}
\settowidth{\itemindent}{(ii)}
\item  {\rm \cite[Corollary 1]{kaparie}} If 
$E$ is a sharply dominating then $S(E)$ is bifull in $E$.
 \item {\rm \cite[Lemma 2.7]{PR6x}}   If $E$ is Archimedean and atomic  then 
$S(E)$ is bifull in $E$.
\end{enumerate}
\end{statement}}

First,  we shall need an extension of Statement
\ref{lem:2.1}, (iii).

\begin{lemma}\label{extfu} Let $E$ be a lattice effect algebra, 
$x_1, \dots, x_n\in E$, 
$k_1, \dots, k_n\in {\mathbb N}$, $n\geq 2$ such that 
$k_ix_i$ exist in $E$ for all\, $1\leq i \leq n$. 
Then 
\begin{center}
\begin{tabular}{r l}
&$x_i\wedge x_j=0$ and $x_i\leq x_j'$ for all $1\leq i < j \leq n$\\
 iff& \\ 
\phantom{\huge I}&$\bigoplus_{j=1}^{n} k_j x_j$ exists and\  $\bigoplus_{j=1}^{n} k_j x_j= \bigvee_{j=1}^{n} k_j x_j$,\\
\phantom{\huge I}&$\bigoplus_{i\in I}k_i x_i\wedge \bigoplus_{j\in J} k_j x_j=0$ and 
\ $\bigoplus_{j\in J} k_j x_j \leq (\bigoplus_{i\in I} k_i x_i)'$\\ 
\phantom{\huge I}&for all\, $\emptyset \not= I \subset \{1, \dots, n\}$, $J=\{1, \dots, n\}\setminus I$. 
\end{tabular}
\end{center}
\end{lemma}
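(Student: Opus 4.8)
The plan is to treat the two implications separately; the statement is essentially a ``finite-system'' strengthening of Statement~\ref{lem:2.1}(iii), and the forward implication will be reduced to that via an induction on the size of the index set.

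For the implication ``$\Leftarrow$'', assume the right-hand conditions and fix $1\le i<j\le n$. Put $I=\{i\}$ and $J=\{1,\dots,n\}\setminus\{i\}$, so $j\in J$. Since every summand of a finite orthogonal sum lies below that sum, $k_jx_j\le\bigoplus_{l\in J}k_lx_l$; hence from $k_ix_i\wedge\bigoplus_{l\in J}k_lx_l=0$ and $\bigoplus_{l\in J}k_lx_l\le(k_ix_i)'$ we obtain $k_ix_i\wedge k_jx_j=0$ and $k_ix_i\le(k_jx_j)'$, and Statement~\ref{lem:2.1}(iii) yields $x_i\wedge x_j=0$ and $x_i\le x_j'$. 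This direction is routine.

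For ``$\Rightarrow$'', first apply Statement~\ref{lem:2.1}(iii) to each pair to get $k_ix_i\wedge k_jx_j=0$ and $k_ix_i\le(k_jx_j)'$ for $i<j$; then Statement~\ref{lem:2.1}(vi) gives $k_ix_i\oplus k_jx_j=(k_ix_i\vee k_jx_j)\oplus(k_ix_i\wedge k_jx_j)=k_ix_i\vee k_jx_j$, so $k_1x_1,\dots,k_nx_n$ are pairwise compatible. The heart of the argument is the claim that for every nonempty $K\subseteq\{1,\dots,n\}$ the sum $\bigoplus_{k\in K}k_kx_k$ exists and equals $\bigvee_{k\in K}k_kx_k$; I would prove this by induction on $|K|$, the case $|K|=1$ being trivial. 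For the inductive step write $K=K_0\cup\{p\}$ with $p\notin K_0\neq\emptyset$, use the inductive hypothesis to replace $\bigoplus_{k\in K_0}k_kx_k$ by $\bigvee_{k\in K_0}k_kx_k$, and apply Lemma~\ref{joyka} with $b=k_px_p$ and $A=\{k_kx_k:k\in K_0\}$: the meet terms $k_px_p\wedge k_kx_k$ all vanish, so Lemma~\ref{joyka} gives both that $k_px_p\oplus\bigvee_{k\in K_0}k_kx_k$ exists and that it equals $k_px_p\vee\bigvee_{k\in K_0}k_kx_k=\bigvee_{k\in K}k_kx_k$; commutativity and associativity of $\oplus$ identify this with $\bigoplus_{k\in K}k_kx_k$. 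Taking $K=\{1,\dots,n\}$ gives the first line of the right-hand side.

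It remains to check the partition conditions. Given a partition $\{I,J\}$ of $\{1,\dots,n\}$ into nonempty sets, write $s_I=\bigoplus_{i\in I}k_ix_i=\bigvee_{i\in I}k_ix_i$ and $s_J=\bigoplus_{j\in J}k_jx_j=\bigvee_{j\in J}k_jx_j$ by the claim. Since each $k_ix_i$ ($i\in I$) is compatible with each $k_jx_j$ ($j\in J$), Statement~\ref{lem:2.1}(i)(a) gives first $s_I\comp k_jx_j$ for all $j\in J$ and then $s_I\comp s_J$; and Statement~\ref{lem:2.1}(i)(b), applied twice, gives $s_I\wedge k_jx_j=\bigvee_{i\in I}(k_ix_i\wedge k_jx_j)=0$ for each $j$ and then $s_I\wedge s_J=\bigvee_{j\in J}(s_I\wedge k_jx_j)=0$. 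Finally, from $s_I\comp s_J$ together with $s_I\wedge s_J=0$ and the definition of compatibility, $s_I\vee s_J=s_I\oplus(s_J\ominus(s_I\wedge s_J))=s_I\oplus s_J$, so $s_I\oplus s_J$ is defined, i.e.\ $s_J\le s_I'$. The only step requiring real care is the inductive claim: one must apply Lemma~\ref{joyka} to exactly the family $\{k_kx_k:k\in K_0\}$ and use Statement~\ref{lem:2.1}(iii) to know the relevant meets are $0$; everything else is bookkeeping with the effect-algebra axioms.
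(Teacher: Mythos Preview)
Your proof is correct and follows essentially the same route as the paper: reduce to the pairwise relations $k_ix_i\wedge k_jx_j=0$, $k_ix_i\le (k_jx_j)'$ via Statement~\ref{lem:2.1}(iii), build the partial sums inductively, and handle the partition conditions with the distributivity in Statement~\ref{lem:2.1}(i). The only cosmetic difference is that you invoke Lemma~\ref{joyka} for the inductive step (and induct on $|K|$ over subsets), whereas the paper uses Statement~\ref{lem:2.1}(vii) directly and inducts on $n$; these are interchangeable packagings of the same distribution argument.
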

\begin{proof} Assume that $x_i\wedge x_j=0$ and $x_i\leq x_j'$ for all $1\leq i < j \leq n$. 
Let $k_ix_i$ exist in $E$ for all $1\leq i \leq n$.
If $n=2$ then from 
Statement \ref{lem:2.1}, (iii) we know that $k_1 x_1\wedge k_2 x_2=0$, 
$k_1 x_1\leq (k_2 x_2)'$ and $k_2 x_2\leq (k_1 x_1)'$. Since 
$k_1 x_1\comp k_2 x_2$ 
we have that $k_1 x_1\vee k_2 x_2=k_1 x_1\oplus (k_2 x_2 \ominus (k_1 x_1\wedge k_2 x_2))=%
k_1 x_1\oplus k_2 x_2$. We shall proceed by induction. 
Let $n\in {\mathbb N}$ be arbitrary, 
$n\geq 3$ and assume that the statement holds for every $m < n$. 
Let us take $\emptyset \not= I \subset \{1, \dots, n\}$ arbitrarily and put $J=\{1, \dots, n\}\setminus I$. 
Hence $|I|<n$ and $|J|<n$.
Then we have (again by Statement \ref{lem:2.1}, (iii)) 
that $k_i x_i\wedge k_j x_j=0$ and $k_j x_j\leq (k_i x_i)'$ for all $i\in I$ and $j\in J$. 
This and the induction assumption yield that 
$\bigoplus_{j\in J} k_j x_j=\bigvee_{j\in J} k_j x_j \leq (k_i x_i)'$ for all $i \in I$. 
This is equivalent to $k_i x_i \leq (\bigoplus_{j\in J} k_j x_j)'$  for all $i \in I$ i.e., 
$\bigoplus_{i\in I} k_i x_i = \bigvee_{i\in I} k_i x_i \leq (\bigoplus_{j\in J} k_j x_j)'$.
Furthermore, $k_j x_j\comp k_i x_i$ for all $i\in I$ and $j\in J$ 
implies by 
Statement \ref{lem:2.1}, (i) that 
$$\bigoplus_{i\in I} k_i x_i\wedge \bigoplus_{j\in J} k_j x_j =%
\bigvee_{i\in I} k_i x_i\wedge \bigvee _{j\in J} k_j x_j=%
\bigvee_{i\in I} \bigvee _{j\in J} (\underbrace{k_i x_i\wedge  k_j x_j}_{=0})=0.$$ 
Similarly by the induction assumption and Statement \ref{lem:2.1}, (iii) and (vii), 
$$
\begin{array}{r c l}
\bigoplus_{j=1}^{n} k_j x_j&=& \left(\bigoplus_{j=1}^{n-1} k_j x_j \right)\oplus k_n x_n  =%
\left(\bigvee_{j=1}^{n-1} k_j x_j \right)\oplus k_n x_n\\
&=& \bigvee_{j=1}^{n-1} (k_j x_j\oplus k_n x_n) %
=\bigvee_{j=1}^{n-1} (k_j x_j\vee k_n x_n)= \bigvee_{j=1}^{n} k_j x_j.
\end{array}
$$

The converse implication is evident.
\end{proof}

\begin{corollary}\label{fsumato} 
Let $E$ be an Archime\-dean lattice effect algebra and $a_1, \dots, a_n$ 
mutually compatible different atoms from $E$, $1\leq k_i \leq n_{a_i}$ for 
all $1\leq i \leq n$. Then $k_1 a_1 \oplus \dots \oplus k_n a_n$ exists and 
$k_1 a_1 \oplus \dots \oplus k_n a_n=k_1 a_1 \vee \dots \vee k_n a_n$ . 
Moreover, 
$n_{a_1} a_1 \oplus \dots \oplus n_{a_n} a_n=n_{a_1} a_1 \vee \dots \vee n_{a_n} a_n$
is the smallest sharp element over $k_1 a_1 \oplus \dots \oplus k_n a_n$.
\end{corollary}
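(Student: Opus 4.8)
The plan is to apply Lemma \ref{extfu} with $x_i = a_i$ and the given $k_i$, and then to identify the resulting top element as the smallest sharp element. First I would check the hypotheses of Lemma \ref{extfu}: since the $a_i$ are mutually different atoms, distinct atoms $a_i, a_j$ satisfy $a_i \wedge a_j = 0$ (an atom has nothing nonzero strictly below it, so $a_i \wedge a_j \in \{0, a_i\} = \{0, a_j\}$ forces $0$), and since $a_i \comp a_j$, compatibility of distinct atoms with $a_i \wedge a_j = 0$ gives $a_i \leq a_j'$ by the compatibility formula $a_i \vee a_j = a_i \oplus (a_j \ominus (a_i\wedge a_j)) = a_i \oplus a_j$. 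The integers $k_i a_i$ exist in $E$ because $1 \le k_i \le n_{a_i}$ and $E$ is Archimedean. Hence Lemma \ref{extfu} applies and yields both that $\bigoplus_{i=1}^n k_i a_i$ exists with $\bigoplus_{i=1}^n k_i a_i = \bigvee_{i=1}^n k_i a_i$, and (taking $k_i = n_{a_i}$) that $\bigoplus_{i=1}^n n_{a_i} a_i = \bigvee_{i=1}^n n_{a_i} a_i$ exists.

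Next I would show $s := \bigoplus_{i=1}^n n_{a_i} a_i$ is sharp. By Statement \ref{lem:2.1}, (v), each $n_{a_i} a_i$ is the smallest sharp element over $a_i$, hence $n_{a_i} a_i \in \Sh(E)$. Since the $a_i$ are mutually compatible, so are the $n_{a_i}a_i$, and an orthogonal join of finitely many pairwise compatible sharp elements is sharp: indeed $\Sh(E)$ restricted to any block $M$ containing all $a_i$ coincides with $\C(M) = \Sh(M)$, a Boolean algebra closed under the join $\bigvee_M$, and by Statement \ref{lem:2.1}, (ii) this join agrees with the join in $E$; alternatively one can argue directly that $s \wedge s' = 0$ using $s = \bigvee_i n_{a_i} a_i$, $s' = \bigwedge_i (n_{a_i}a_i)'$, and distributivity of $\wedge$ over $\vee$ along the compatible family via Statement \ref{lem:2.1}, (i)(b). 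So $s \in \Sh(E)$.

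Then I would verify minimality. Let $t \in \Sh(E)$ with $t \geq \bigoplus_{i=1}^n k_i a_i = \bigvee_{i=1}^n k_i a_i$. Then $t \geq k_i a_i \geq a_i$ for each $i$, and since $n_{a_i} a_i$ is the smallest sharp element over $a_i$ (Statement \ref{lem:2.1}, (v)), we get $t \geq n_{a_i} a_i$ for every $i$, whence $t \geq \bigvee_{i=1}^n n_{a_i} a_i = s$. Combined with $s \in \Sh(E)$ and $s \geq \bigoplus_i k_i a_i$ (as $n_{a_i} \ge k_i$, so $n_{a_i} a_i \ge k_i a_i$), this shows $s$ is the smallest sharp element over $\bigoplus_{i=1}^n k_i a_i$.

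I expect the only genuine obstacle to be the claim that the orthogonal join of the pairwise compatible sharp elements $n_{a_i} a_i$ is again sharp; the cleanest route is to fix a block $M \supseteq \{a_1, \dots, a_n\}$ (available since the $a_i$ are pairwise compatible) and invoke that $\Sh(M) = \C(M)$ is a sub-lattice effect algebra which is full in $E$ by Statement \ref{lem:2.1}, (ii), so that the join computed in $E$ lands in $\Sh(M) \subseteq \Sh(E)$. Everything else is a direct bookkeeping application of Lemma \ref{extfu} and Statement \ref{lem:2.1}, (v).
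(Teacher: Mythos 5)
Your proof is correct and follows exactly the route the paper intends: the corollary is stated without a separate proof as an immediate consequence of Lemma \ref{extfu} (applied to the mutually compatible atoms $a_i$, which satisfy $a_i\wedge a_j=0$ and $a_i\le a_j'$ as you check) together with Statement \ref{lem:2.1}, (v). The one detail that genuinely needs an argument --- that $\bigvee_{i}n_{a_i}a_i$ is sharp --- you handle correctly, either by passing to a block containing the $a_i$ or by the direct computation $s\wedge s'=\bigvee_j\bigl(n_{a_j}a_j\wedge s'\bigr)=0$, and the minimality step via $t\ge a_i\Rightarrow t\ge n_{a_i}a_i$ is exactly right.
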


{\renewcommand{\labelenumi}{{\normalfont  (\roman{enumi})}}
\begin{theorem}\label{popismeager} Let $E$ be 
an atomic Archime\-dean  lattice
effect algebra and let $x\in \Mea(E)$. Let us denote 
$A_x=\{a \mid \ a\ \text{an atom of}\ E,$ $ a\leq x\}$ and, for any $a\in A_x$, 
we shall put $k^{x}_a=\text{max}\{ k\in {\mathbb N}\mid ka \leq x\}$. Then
\begin{enumerate}
\settowidth{\leftmargin}{(iiiii)}
\settowidth{\labelwidth}{(iii)}
\settowidth{\itemindent}{(ii)}
\item For any $a\in A_x$ we have $k^{x}_a < n_a$.
\item The set $F_x=\{k^{x}_a a \mid \ a\in A_x\}$ is orthogonal and 
$$
x=\bigoplus\{k^{x}_a a \mid a\ \text{an atom of}\ E,\ a\leq x\}=\bigoplus F_x = \bigvee F_x.
$$
Moreover, for all $B\subseteq A_x$ and all natural numbers   $l_b< n_b, b\in B$ such that 
$x=\bigoplus \{l_b b \mid b\in B\}$ we have that $B=A_x$ and 
$l_a=k^{x}_a$ for all $a\in A_x$ i.e., $F_x$ is the unique set  
of multiples of atoms from $A_x$ such that its orthogonal sum is $x$. 
\item For every atomic block $M$ of $E$, $x\in M$ implies that 
$[0, x]\subseteq M$.
\item $x\in \B(E)$ implies that 
$[0, x]\subseteq \B(E)$.
\item If  $\widehat{x}$ exists then 
$$
\widehat{x}=\widehat{\widehat{x}\ominus x}=\bigoplus\{n_a a \mid a\ \text{an atom of}\ E,\ a\leq x\}%
=\bigvee\{n_a a \mid\ a\in A_x\}
$$
and 
$$
\widehat{x}\ominus x=\bigoplus\{(n_a-k^{x}_a) a \mid\ a\in A_x\}%
=\bigvee\{(n_a-k^{x}_a) a \mid\ a\in A_x\}.
$$

\item If $x$ is finite then $[0, x]$ is a finite lattice, 
$x=\bigoplus_{i=1}^{n} k_i a_i=\bigvee_{i=1}^{n} k_i a_i$ for a suitable finite set 
$A_x=\{a_1, \dots, a_n\}$ of atoms of $E$ and 
$[0, x]\cong \prod _{i=1}^{n} [0, k_i a_i]$.
\end{enumerate}
\end{theorem}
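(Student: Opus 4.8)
The plan is to do the real work in part~(ii) and to let parts~(iii)--(vi) follow from it by bookkeeping with the basic decomposition and the statements of Section~1. Note first that $k^{x}_a$ is well defined because $E$ is Archimedean. Part~(i) is immediate: if $k^{x}_a=n_a$ then $n_aa\le x$, and by Statement~\ref{lem:2.1}(v) the element $n_aa$ is a nonzero sharp element below $x$, contradicting $x\in\Mea(E)$; hence $k^{x}_a<n_a$. The engine for everything else is the claim
\begin{center}
$(\ast)$\qquad if $M$ is an atomic block of $E$ with $x\in M$, then every atom $a\le x$ lies in $M$,
\end{center}
which I expect to be the only non-routine step and which I discuss at the end.

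For part~(ii) the plan is to feed $x$ into Statement~\ref{lem:2.0}. Assume $x\ne 0$. By Statement~\ref{lem:2.0} there are mutually distinct atoms $a_\alpha$ and positive integers $k_\alpha$ with $x=\bigoplus\{k_\alpha a_\alpha\}=\bigvee\{k_\alpha a_\alpha\}$; since the orthogonal sum exists, $a_\alpha\le k_\alpha a_\alpha\le(k_\beta a_\beta)'\le a_\beta'$ for $\alpha\ne\beta$, so $\{a_\alpha\}$ is pairwise compatible, and extending it to a maximal pairwise compatible set of atoms (Statement~\ref{lem:2.2}(i),(iii)) gives an atomic block $M$ with $x=\bigvee\{k_\alpha a_\alpha\}\in M$ by fullness (Statement~\ref{lem:2.1}(ii)). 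Now apply~$(\ast)$: any atom $a\le x$ is an atom of $M$; if $a\ne a_\alpha$ for all $\alpha$, then $a\wedge a_\alpha=0$, hence $a\le a_\alpha'$, hence $a\wedge k_\alpha a_\alpha=0$ by Statement~\ref{lem:2.1}(iii), so Statement~\ref{lem:2.1}(i)(b) gives $a=a\wedge x=\bigvee\{a\wedge k_\alpha a_\alpha\}=0$, a contradiction; thus $A_x=\{a_\alpha\}$. If $k^{x}_{a_\alpha}>k_\alpha$, then $(k_\alpha+1)a_\alpha\le x$ and, reassociating the orthogonal sum, $x\ominus k_\alpha a_\alpha=\bigoplus\{k_\beta a_\beta:\beta\ne\alpha\}=\bigvee\{k_\beta a_\beta:\beta\ne\alpha\}$, forcing $a_\alpha\le\bigvee\{k_\beta a_\beta:\beta\ne\alpha\}$ and hence $a_\alpha=0$, impossible; so $k^{x}_{a_\alpha}=k_\alpha$. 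Therefore $F_x=\{k_\alpha a_\alpha\}$ is orthogonal with $\bigoplus F_x=\bigvee F_x=x$, and for the uniqueness clause any $\{l_bb:b\in B\}$ as in the statement has $B\subseteq A_x\subseteq M$, so $x=\bigvee\{l_bb:b\in B\}$ by Corollary~\ref{fsumato}, and the meet computation above repeated inside $M$ forces $B=A_x$ and $l_b=k^{x}_b$.

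Parts~(iii)--(vi) now follow. For~(iii): if $M$ is an atomic block with $x\in M$, then $A_x\subseteq M$ by~$(\ast)$, and for $0<y\le x$ the element $y$ is meager with $A_y\subseteq A_x\subseteq M$, so $y=\bigvee\{k^{y}_bb:b\in A_y\}\in M$ by~(ii) and fullness of $M$; hence $[0,x]\subseteq M$. Part~(iv) follows at once from~(iii) and Statement~\ref{lem:2.2}(v). For~(v), assume $\widehat{x}$ exists; since $a\le x\le\widehat{x}\in\Sh(E)$ and $n_aa=\widehat{a}$ is the least sharp element over $a$ (Statement~\ref{lem:2.1}(v)), we have $n_aa\le\widehat{x}$ for every $a\in A_x$, so --- using~(ii), Corollary~\ref{fsumato}, Lemma~\ref{joyka} and bifullness of $\Sh(E)$ (Statement~\ref{shdisbifullcv}(ii)) --- the element $s:=\bigoplus\{n_aa:a\in A_x\}=\bigvee\{n_aa:a\in A_x\}$ exists, is sharp, satisfies $x\le s$ and $s\le u$ for every $u\in\Sh(E)$ with $u\ge x$, hence $s=\widehat{x}$; then $\widehat{x}=x\oplus\bigoplus\{(n_a-k^{x}_a)a:a\in A_x\}$ by regrouping, so cancellation gives $\widehat{x}\ominus x=\bigoplus\{(n_a-k^{x}_a)a:a\in A_x\}=\bigvee\{(n_a-k^{x}_a)a:a\in A_x\}$, and $\widehat{\widehat{x}\ominus x}=\widehat{x}$ by Lemma~\ref{jmpy2}(i),(ii) (as $\widehat{x}\ominus x\in\Mea(E)$ and $x\oplus(\widehat{x}\ominus x)=\widehat{x}\in\Sh(E)$). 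For~(vi), if $x$ is finite then $A_x$ is finite (by~(ii)) and $F_x=\{k_ia_i:1\le i\le n\}$ with the $a_i$ pairwise compatible; by Lemma~\ref{extfu}, $k_ia_i\wedge\bigoplus_{j\ne i}k_ja_j=0$, so $k_ia_i$ is sharp in $[0,x]$, and by~(ii) and Statement~\ref{lem:2.1}(i) every $y\le x$ is compatible with $a_i$ and hence with $k_ia_i$, so $k_ia_i\in\C([0,x])$; the resulting central decomposition gives $[0,x]\cong\prod_{i=1}^{n}[0,k_ia_i]$, and applying~(ii) to the meager element $k_ia_i$ shows $A_{k_ia_i}=\{a_i\}$, i.e.\ $[0,k_ia_i]=\{0,a_i,\dots,k_ia_i\}$ is a chain; hence $[0,x]$ is a finite lattice of the stated form.

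It remains to prove~$(\ast)$, and this is where I expect the difficulty to lie: it is exactly the point at which the hypothesis $x\in\Mea(E)$ must be used essentially (for general $x$, e.g.\ $x=1$, the conclusion of~$(\ast)$ is false). Meagerness forces every atom $a\le x$ to be non-sharp, hence $a\le a'$, since $a\wedge a'$ is an atom or $0$. My plan for~$(\ast)$ is to reduce, via Statement~\ref{lem:2.2}(ii), to proving $a\comp c$ whenever $a\le x$ is an atom and $c$ is an atom of a block $M$ containing $x$: then $c\comp x$, and a case distinction on $c\wedge x\in\{0,c\}$ --- in the first case $c\le x'\le a'$ so that $a\oplus c$ exists, in the second case $a$ and $c$ are two non-sharp atoms below the meager element $x$ --- should, combined with the compatibility calculus of Statement~\ref{lem:2.1}, yield $a\comp c$. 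Making this compatibility dichotomy precise is the one genuinely new ingredient; everything above it is routine.
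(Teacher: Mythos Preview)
Your bookkeeping in (iii)--(vi) is sound, but the organization around $(\ast)$ leaves a real gap precisely where you flag it. In the second case of $(\ast)$ you have atoms $a,c\le x$ with $c$ an atom of $M$, and you suggest that ``the compatibility calculus of Statement~\ref{lem:2.1}'' will yield $a\comp c$; but nothing in that statement manufactures compatibility from a common upper bound, and there is no \emph{a priori} reason two non-sharp atoms under a meager element must be compatible. What is actually needed --- and what the paper uses --- is the observation that for every atom $b$ appearing in an orthogonal decomposition $x=\bigoplus\{l_b b:b\in B\}$ with $l_b<n_b$ one has $x\le b'$: since $(l_b+1)b$ exists, Corollary~\ref{fsumato} gives that $b\oplus\bigoplus\{l_d d:d\in D\}$ exists for every finite $D\subseteq B$, so every finite subsum is $\le b'$ and hence $x\le b'$. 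From $a\le x\le b'$ one gets $a\comp b$ directly, and then the meet computation $k^x_a a=k^x_a a\wedge x=\bigvee_{b\in B}(k^x_a a\wedge l_b b)=k^x_a a\wedge l_a a$ (Statement~\ref{lem:2.1}(i)(b),(iii)) forces $a\in B$ and $l_a=k^x_a$.

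This means you have the logical dependence inverted relative to the paper. The paper proves (ii) with no block in sight, via the observation above, and then derives (iii) --- and hence your $(\ast)$ --- from the uniqueness clause of (ii): if $x$ lies in an atomic block $M$, decompose $x$ by atoms of $M$ (Statement~\ref{lem:2.0} inside $M$); bifullness of $M$ makes this a decomposition in $E$, and uniqueness forces the atom set to equal $A_x$, so $A_x\subseteq M$ and $[0,x]\subseteq M$ follows. Your route through $(\ast)$ can be salvaged --- in the second case, the same meet argument inside $M$ shows that the atom $c\le x$ must itself be a decomposition atom of $x$ in $M$, whence $x\le c'$ and $a\comp c$ --- but then you are rerunning the paper's argument inside $M$, and the block detour for (ii) buys nothing. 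One smaller point: in (v) your ``regrouping'' of the infinite orthogonal sums to obtain $\widehat{x}\ominus x$ is glib; the paper instead checks directly that $\widehat{x}\ominus x\ge(n_b-k^x_b)b$ for each $b\in A_x$ and that any common upper bound of these elements dominates $\widehat{x}\ominus x$.
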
}
\begin{proof}(i): Let $a\in A_x$. Since $E$ is Archimedean we have 
$k^{x}_a \leq n_a$. Assume that  $k^{x}_a = n_a$. 
Then $0< n_a a\leq x$ and $n_a a\in \Sh(E)$ by 
Statement \ref{lem:2.0}, i.e., $x\not\in \Mea(E)$, a contradiction.

(ii): From Statement \ref{lem:2.0}, (i) we know that there is a subset 
$B\subseteq A_x$ and natural numbers   $l_b< n_b, b\in B$ such that 
$$
x=\bigoplus \{l_b b \mid b\in B\} = \bigvee \{l_b b \mid b\in B\}.
$$
Let us show that $F_x=\{l_b b \mid b\in B\}$. 
Evidently, $l_b \leq k^{x}_b < n_b$ and $l_b b\leq x$ for all $b\in B$. Hence, for any finite subset 
$D\subseteq B$ and for any $c\in B$, we have by Corollary \ref{fsumato}  
that $c\oplus \bigoplus \{l_b b \mid b\in D\}$ exists. This 
yields that $\bigoplus \{l_b b \mid b\in D\}\leq c'$ and 
therefore $x\leq c'$ for all $c\in B$. 
Now, let $a\in A_x$. Then  $a\leq x\leq c'$ for all $c\in B$ i.e., $a\comp c$. 

We then have
$$
\begin{array}{r c l}
0\not = k^{x}_a a&=&k^{x}_a a \wedge x = k^{x}_a a \wedge \bigvee \{l_b b \mid b\in B\}\\
&=&\bigvee \{k^{x}_a a \wedge l_b b \mid b\in B\}= k^{x}_a a \wedge l_a a .
\end{array}
$$
The third equation follows from Statement \ref{lem:2.1}, (i) and 
the last equation follows from the fact that $a\not =b$, $a\comp b$ implies 
$k^{x}_a a \wedge l_b b=0$. Hence $k^{x}_a\leq l_a \leq k^{x}_a$ i.e., $a\in B$ and $l^{x}_a=k_a$. 
Therefore $A_x=B$ and $F_x=\{l_b b \mid b\in B\}$. The remaining part of the statement is evident.

\noindent{}(iii): Let $y\leq x$. Then $y\in \Mea(E)$, $A_y\subseteq A_x$ and  
$k^{y}_a \leq k^{x}_a $ for all $a\in A_y$. 
Recall that by \cite[Lemma 2.7 (i)]{PR6x} we 
know that $M$ is a bifull sub-lattice effect algebra of $E$. 
Since $M$ is atomic we have that 
$x=\bigoplus_M \{l_b b \mid b\in A^{M}_x\}=\bigoplus_E \{l_b b \mid b\in A^{M}_x\}$; here 
$A^{M}_x=\{a \mid \ a\ \text{an atom of}\ M,$ $ a\leq x\}$.
This immediately implies by (ii) that the sets 
$A^{M}_x$ and $A_x$ coincide. 
Therefore, $A_x\subseteq M$. Note also that $M$ is closed under arbitrary joins existing 
in $E$. Hence $y=\bigvee\{k^{y}_a a \mid \ a\leq y\}\in M$.

\noindent{}(iv): It follows immediately from (iii) and by  
$B(E)=\bigcap \{ M\subseteq E \mid 
M \ \text{is an atomic}$ $\text{block of}\ E\/\}$ (see \ref{lem:2.2}).

\noindent{}(v):  We have that $x=\bigoplus\{k^{x}_a a \mid a\ \text{an atom of}\ E, 
\ a\leq x\}$. 
Let $a\in A_x$. Then $a\leq x\leq \widehat{x}\in\Sh(E)$. Therefore 
 $n_a a\leq \widehat{x}$. Assume that $z\in\Sh(E)$, 
$n_a a\leq z$ for all $a\in A_x$. Then $k^{x}_a a\leq z$ 
 for all $a\in A_x$, i.e., $x\leq z$. This yields that 
 $\widehat{x}\leq z$, i.e. 
$\widehat{x}=\bigvee_{\Sh(E)}\{n_a a \mid\ a\ \text{an atom of}\ E, \ a\leq x\}$. 
By Statement \ref{shdisbifullcv}, (ii) we obtain that  
$\widehat{x}=\bigvee_{E}\{n_a a \mid\ a\ \text{an atom of}\ E, \ a\leq x\}$. 
Let $G\subseteq A_x$, $G$ finite. Then 
$\bigoplus \{n_a a \mid\ a\in G\}= \bigvee \{n_a a \mid\ a\in G\}\leq \widehat{x}$. 
Hence $\widehat{x}=\bigoplus\{n_a a \mid\ a\ \text{an atom of}\ E, \ a\leq x\}$. 

Further, we have 
$$
\begin{array}{@{}r c l}
\widehat{x}\ominus x&=& \left(\bigoplus\{n_{a} a \mid a\in A_x\}\right)\ominus %
\left(\bigoplus\{k^{x}_a a \mid a\in A_x\}\right)\\
\phantom{\text{\huge I}}&\geq&%
\left(\bigoplus\{n_{a} a \mid a\in A_x, a\not =b\}\oplus n_{b} b\right)\ominus %
\left(\bigoplus\{n_{a} a \mid a\in A_x, a\not =b\}\oplus k^{x}_b b\right)\\
\phantom{\text{\huge I}}&=& \left(n_{b} -k^{x}_b\right)b
\end{array}
$$
\noindent{}for all $b\in A_x$.
Now, let $z\in E$ such that $z\geq \left(n_{b} -k^{x}_b\right)b$ 
for all $b\in A_x$. Then also 
$z\wedge (\widehat{x}\ominus x)\geq \left(n_{b} -k^{x}_b\right)b$. 
Hence $\left(z\wedge (\widehat{x}\ominus x)\right)\oplus k^{x}_b b\geq 
n_{b} b$ i.e., 
$\left(z\wedge (\widehat{x}\ominus x)\right)\oplus %
\bigoplus\{k^{x}_a a \mid a\in A_x\}\geq 
\bigvee\{n_{a}a\mid  a\in A_x\}$. This yields 
$$
\begin{array}{c}
z\geq z\wedge (\widehat{x}\ominus x)\geq 
 \left(\bigoplus\{n_{a} a \mid a\in A_x\}\right)\ominus %
\left(\bigoplus\{k^{x}_a a \mid a\in A_x\}\right)=\widehat{x}\ominus x.
\end{array} $$
Therefore $\widehat{x}\ominus x= \bigoplus\{(n_a-k^{x}_a) a \mid\ a\in A_x\}%
=\bigvee\{(n_a-k^{x}_a) a \mid\ a\in A_x\}$.

The equality  $\widehat{x}=\widehat{\widehat{x}\ominus x}$ 
follows from Lemma \ref{jpy2}, (ii).

\noindent{}(vi): Let $x=\bigoplus_{i=1}^{n} k_i a_i$. By (ii) we have that the only atoms 
below $x$ are $a_1, \dots,  a_n$. Hence 
$x=\bigoplus_{i=1}^{n} k_i a_i=\bigvee_{i=1}^{n} k_i a_i$. From 
the proof of (iii) we know that any element of $[0, x]$ is of the 
form $\bigvee_{i=1}^{n} l_i a_i$ 
for uniquely determined natural numbers $0\leq l_i<n_{a_i}$, $1\leq i\leq n$ 
and conversely, for any system of natural numbers 
$0\leq l_i< n_{a_i}$, $1\leq i\leq n$, 
$\bigvee_{i=1}^{n} l_i a_i\in [0, x]$. This yields the required isomorphism between 
$[0, x]$ and $\prod _{i=1}^{n} [0, k_i a_i]$.
\end{proof}

Note that  Theorem \ref{popismeager} (ii), (iv) immediately yields that 
the set of meager (finite meager)
elements of an atomic Archime\-dean  lattice
effect algebra is a dual of a weak implication algebra introduced in \cite{chhak}.

Motivated by \cite[Proposition 15]{jenca} we have the following proposition.

{\renewcommand{\labelenumi}{{\normalfont  (\roman{enumi})}}
\begin{proposition}\label{mvme}
Let $E$ be an atomic Archimedean MV-effect algebra. Then:
\begin{enumerate}
\settowidth{\leftmargin}{(iiiii)}
\settowidth{\labelwidth}{(iii)}
\settowidth{\itemindent}{(ii)}
\item Let $x\in \Mea(E)$ and  $y\in E$ such that $x\wedge y=0$ and 
$\widehat{x}$ exists.  Then 
$\widehat{x}\wedge y=0$.
\item $\Mea(E)$ is a $\bigvee$-bifull sub-poset of $E$.
\item $\Mea(E)$ is a lattice ideal  of $E$.
\end{enumerate}
\end{proposition}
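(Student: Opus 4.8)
The plan is to treat the three items in order; (i) is a short atom-chasing argument resting on Theorem~\ref{popismeager}(v), and (iii) follows immediately from (ii), so the real content is in (ii). Throughout I use that $E=\B(E)$, i.e.\ any two elements of $E$ are compatible. For (i): since $\widehat{x}$ exists, Theorem~\ref{popismeager}(v) gives $\widehat{x}=\bigvee\{n_a a\mid a\in A_x\}$, where $A_x$ is the set of atoms below $x$. Suppose $\widehat{x}\wedge y\neq 0$ and pick, by atomicity, an atom $c\le\widehat{x}\wedge y$. Compatibility and Statement~\ref{lem:2.1}(i) give $c=c\wedge\widehat{x}=\bigvee\{c\wedge n_a a\mid a\in A_x\}$, so $c\wedge n_a a\neq 0$ for some $a\in A_x$, whence $c\le n_a a$ because $c$ is an atom. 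If $c\neq a$ then $c\wedge a=0$, and Statement~\ref{lem:2.1}(iii) forces $c\wedge n_a a=0$, a contradiction; hence $c=a\in A_x$, so $c\le x$, and then $c\le x\wedge y=0$, a contradiction. Thus $\widehat{x}\wedge y=0$.

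Before (ii) I record an auxiliary fact: for every atom $a\in E$, the interval $[0,n_a a]$ is the chain $\{0,a,2a,\dots,n_a a\}$. Indeed, if $0\neq t\le n_a a$ and $b\le t$ is an atom, then $b\le n_a a$, and $b\neq a$ would give $b\wedge a=0$, hence $b\wedge n_a a=0$ by Statement~\ref{lem:2.1}(iii), which is impossible; so $a$ is the only atom below $t$, and by Statement~\ref{lem:2.0}, $t=ka$ for some $1\le k\le n_a$, while $0<a<2a<\dots<n_a a$ by the cancellation law. In particular, if moreover $t\in\Mea(E)$, then $k<n_a$ (as $n_a a\in\Sh(E)$ is nonzero), i.e.\ $t\le(n_a-1)a$.

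For (ii), let $X\subseteq\Mea(E)$. First suppose $z:=\bigvee_E X$ exists, and assume for contradiction $z\notin\Mea(E)$; then by atomicity there are an atom $c$ and $s\in\Sh(E)$ with $0\neq c\le s\le z$, so $n_c c\le s\le z$ by Statement~\ref{lem:2.1}(v). Compatibility and Statement~\ref{lem:2.1}(i) yield $n_c c=n_c c\wedge z=\bigvee\{n_c c\wedge x\mid x\in X\}$. For each $x\in X$ we have $n_c c\wedge x\le n_c c$ and $n_c c\wedge x\le x\in\Mea(E)$, so the auxiliary fact gives $n_c c\wedge x\le(n_c-1)c$; hence $n_c c\le(n_c-1)c<n_c c$, a contradiction. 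So $z\in\Mea(E)$, and since every upper bound of $X$ in $\Mea(E)$ is one in $E$, $z=\bigvee_{\Mea(E)}X$. Conversely, suppose $w:=\bigvee_{\Mea(E)}X$ exists, and let $u\in E$ be any upper bound of $X$ in $E$. Then $w\wedge u\in\Mea(E)$ (it lies below $w$) and $w\wedge u\ge x$ for every $x\in X$, so $w\wedge u$ is an upper bound of $X$ in $\Mea(E)$; minimality of $w$ gives $w\le w\wedge u\le u$. Hence $w=\bigvee_E X$, and $\Mea(E)$ is $\bigvee$-bifull in $E$.

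For (iii): $\Mea(E)$ contains $0$ and is downward closed (as recalled before Lemma~\ref{jmpy2}), and for $x,y\in\Mea(E)$ the join $x\vee y=\bigvee_E\{x,y\}$ exists since $E$ is a lattice, hence lies in $\Mea(E)$ by (ii). Thus $\Mea(E)$ is a lattice ideal of $E$. The only delicate point is the first half of (ii): the crux is that $n_c c\wedge x$ is forced to be a \emph{proper} multiple of $c$, so the elements $n_c c\wedge x$ are uniformly bounded below $n_c c$ and cannot join up to it; this is exactly where the chain structure of $[0,n_c c]$ — and hence the single-block (MV) hypothesis, via Statement~\ref{lem:2.1}(iii) — is essential.
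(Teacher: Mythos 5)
Your proof is correct and follows essentially the same route as the paper's: part (i) rests on the description $\widehat{x}=\bigvee\{n_aa\mid a\in A_x\}$ from Theorem~\ref{popismeager}(v) together with Statement~\ref{lem:2.1}(i) and (iii), and part (ii) derives the contradiction from the fact that each $n_cc\wedge x$ is a proper multiple of $c$ (your auxiliary chain fact is just an explicit form of the paper's computation $n_bb\wedge x=k^x_bb$). The only cosmetic difference is that you argue (i) by contradiction through an atom below $\widehat{x}\wedge y$, where the paper computes $\widehat{x}\wedge y=\bigvee\{n_aa\wedge y\}=0$ directly.
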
}
\begin{proof} (i): As in Theorem \ref{popismeager} let us put 
$A_x=\{a \mid \ a\ \text{an atom of}\ E,$ $ a\leq x\}$. 
Evidently, $a\wedge y=0$ and $y\leq a'$ for all $a\in A_x$. Therefore 
by  Statement \ref{lem:2.1}, (iii)
$n_a a\wedge y=0$  for all $a\in A_x$. 
Then Theorem \ref{popismeager}, (v) yields that 
$\widehat{x}\wedge y=\bigvee\{n_a a \mid\ a\in A_x\}\wedge y=%
\bigvee\{n_a a \wedge y\mid\ a\in A_x\}=0$.

\noindent{}(ii): Let $X\subseteq \Mea(E)$. Assume that 
$z=\bigvee_{\Mea(E)} X$ exists. Let $u\in E$ be an upper bound of $X$. Hence 
also $u\wedge z$ is an upper bound of $X$ and clearly $u\wedge z$ is meager. 
Therefore $z=u\wedge z\leq u$, i.e., $z=\bigvee_{E} X$. 

Now, assume that $z=\bigvee_{E} X$ exists. It is enough to check that 
$z\in \Mea(E)$. Let $t\in\Sh(E)$, $t\leq z$, $t\not =0$. Then 
there exists an atom $b\in E$ such that $b\leq t$. Let us put 
$k_b^{x}=\text{max}\{k \mid kb \leq x\}< n_b$ (since any $x\in X$ is meager) 
and $k_b=\text{max}\{k^{x}_b \mid x\in X\}< n_b$. 
Hence also $n_b b\leq t\leq z$ and 
$n_b b= n_b b\wedge \bigvee_{E} X=%
\bigvee_{E}\{ n_b b \wedge x\mid x\in X\}=%
 \bigvee_{E}\{ k_b^{x} b \mid x\in X\}=k_b b <n_b b$, a contradiction. 
 Hence $\widetilde{z}=0$ and $z\in \Mea(E)$.

\noindent{}(iii): It follows immediately from (ii) because $\Mea(E)$ is a 
downset in $E$ and $E$ is a lattice.
\end{proof}

Moreover we have 

{\renewcommand{\labelenumi}{{\normalfont  (\roman{enumi})}}
\begin{proposition}\label{meagerbe}Let $E$ be 
an atomic Archime\-dean  lattice effect algebra. 
Then 
\begin{enumerate}
\settowidth{\leftmargin}{(iiiii)}
\settowidth{\labelwidth}{(iii)}
\settowidth{\itemindent}{(ii)}
\item For all $X\subseteq \B(E)\cap \Mea(E)$, 
$\bigvee_E X$ exists iff \, $\bigvee_{\B(E)} X$ exists, in which case 
$\bigvee_E X=\bigvee_{\B(E)} X\in \Mea(E)$. 
\item $\B(E)\cap \Mea(E)$ is a $\bigvee$-bifull sub-poset of $E$.
\end{enumerate}
\end{proposition}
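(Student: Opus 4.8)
The plan is to prove (i) and deduce (ii). For (i) the forward implication [$\bigvee_E X$ exists] is routine: $\B(E)$ is a full sub-lattice effect algebra of $E$ (Statement \ref{lem:2.1}(ii)), so $\bigvee_{\B(E)}X=\bigvee_E X$, and this element is meager by the argument of Proposition \ref{mvme}(ii) adapted to the present setting — if some $0\neq s\in\Sh(E)$ lay below it, an atom $b\leq s$ would, by Statement \ref{lem:2.1}(i)(b) (applicable since $\bigvee_E X$ exists and each $x\in X\subseteq\B(E)$ is compatible with $b$), satisfy $b\leq x_0$ for some $x_0\in X$, hence $b\in\B(E)$ by Theorem \ref{popismeager}(iv); then $n_b b\leq s\leq\bigvee_E X$ while $n_b b\wedge x\leq(n_b-1)b$ for every $x\in X$ (Theorem \ref{popismeager}(i),(ii) together with Statement \ref{lem:2.1}(i)(b),(iii)), whence $n_b b=\bigvee_E\{n_b b\wedge x\mid x\in X\}\leq(n_b-1)b$, a contradiction.

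The substance is the converse. Assume $z:=\bigvee_{\B(E)}X$ exists. Expanding each $x\in X$ via Theorem \ref{popismeager}(ii),(iv) as $\bigvee_{\B(E)}\{k^x_a a\mid a\in A_x\}$ with $A_x\subseteq\B(E)$ a set of atoms and $k^x_a<n_a$, and putting $A=\bigcup_{x\in X}A_x$ and $k_a=\max\{k^x_a\mid x\in X,\ a\in A_x\}\in\{1,\dots,n_a-1\}$, a direct computation gives $z=\bigvee_{\B(E)}\{k_a a\mid a\in A\}=\bigoplus_{\B(E)}\{k_a a\mid a\in A\}$, the family being orthogonal by Corollary \ref{fsumato} since the atoms of $A$ are pairwise compatible as elements of $\B(E)$. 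The heart of the matter is the claim that \emph{every atom $b$ of $E$ with $b\leq z$ already lies in $A$} (equivalently, in $\B(E)$). Granting this, $\max\{k\mid kb\leq z\}=k_b<n_b$ for every atom $b\leq z$, so no nonzero sharp element of $E$ lies below $z$ (such an element would dominate an atom $b$, hence $n_b b$), i.e. $z\in\Mea(E)$; then $[0,z]_E\subseteq\B(E)$ by Theorem \ref{popismeager}(iv), so for any upper bound $u\in E$ of $X$ the element $u\wedge z$ lies in $[0,z]_E\subseteq\B(E)$ and is an upper bound of $X$, giving $u\wedge z\geq\bigvee_{\B(E)}X=z$, hence $u\geq z$; thus $z=\bigvee_E X\in\Mea(E)$, proving (i). Part (ii) follows at once: if $\bigvee_E X$ exists it lies in $\B(E)\cap\Mea(E)$ by (i) and is plainly the join of $X$ there; conversely if $z=\bigvee_{\B(E)\cap\Mea(E)}X$ exists then $z\in\B(E)\cap\Mea(E)$, so $[0,z]_E\subseteq\B(E)$, and for any upper bound $u\in E$ of $X$ the element $u\wedge z$ lies in $[0,z]_E\subseteq\B(E)$ and, being below the meager $z$, is meager, so $u\wedge z\geq z$ and $z=\bigvee_E X$.

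To prove the claim, fix an atom $b\leq z$ and choose, using Statement \ref{lem:2.2}(iii),(iv), an atomic block $M$ of $E$ with $b\in M$; since $z\in\B(E)=\bigcap\{\text{atomic blocks}\}$ (Statement \ref{lem:2.2}(v)) we have $z\in M$, and $A\subseteq\B(E)\subseteq M$. Applying the basic decomposition (Statement \ref{lem:2.0}) in the atomic Archimedean lattice effect algebra $M$ we get $z=\bigoplus_M\{l_c c\mid c\in\mathcal C\}$, where $\mathcal C$ is the set of all atoms of $M$ below $z$; then $b\in\mathcal C$, $A\subseteq\mathcal C$, and $l_c=\max\{k\mid kc\leq z\}$, so in particular $l_a=k_a$ for $a\in A$ (a short computation with Statement \ref{lem:2.1}(i)(b),(iii) carried out in $\B(E)$). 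Regrouping the orthogonal sum along the partition $\mathcal C=A\cup(\mathcal C\setminus A)$, write $z=w_A\oplus w_B$ with $w_A=\bigoplus_M\{k_a a\mid a\in A\}$ and $w_B=\bigoplus_M\{l_c c\mid c\in\mathcal C\setminus A\}$. Now $w_A$ is the supremum in $M$ of the finite partial sums $\bigoplus_{a\in F}k_a a$ ($F\subseteq A$ finite), which lie in $\B(E)$; since atomic blocks are bifull in $E$ (\cite[Lemma~2.7(i)]{PR6x}) and $\B(E)$ is full in $E$, this forces $w_A\in\B(E)$. As $w_A\geq k_a a$ for all $a\in A$, we get $w_A\geq\bigvee_{\B(E)}\{k_a a\mid a\in A\}=z$; but $w_A\leq z$, so $w_A=z$, hence by cancellation $w_B=0$, forcing $\mathcal C\setminus A=\emptyset$, i.e. $b\in\mathcal C\subseteq A$.

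The main obstacle is precisely this claim: a priori $z$ is only a supremum taken inside the subalgebra $\B(E)$, so nothing guarantees that atoms of $E$ lying below $z$ are ``visible'' to $\B(E)$; breaking this circularity requires combining the block structure of $E$ (Statement \ref{lem:2.2}), the uniqueness of basic decompositions inside atomic blocks (Statement \ref{lem:2.0}), and the bifullness of atomic blocks, so as to pin the atoms below $z$ down into $\B(E)$ via the minimality of $z$ among upper bounds of $X$ in $\B(E)$. The rest of the argument is then bookkeeping with Theorem \ref{popismeager} and Statement \ref{lem:2.1}.
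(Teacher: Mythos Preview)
Your forward direction of (i) and your argument for (ii) are correct and close in spirit to the paper's. The issue is in the converse direction of (i), where your route diverges from the paper's and contains a real gap. The paper does not pass through atomic blocks at all: after writing $z=\bigvee_{\B(E)}\{k_a a\mid a\in A\}$ it shows $z\in\Mea(E)$ directly by assuming some $0\neq y\in\Sh(E)$ with $y\leq z$, choosing an atom $c\leq y$ (so $n_c c\leq z$), and splitting into two cases. If $c\in A$ then $n_c c\in\B(E)$, and Statement~\ref{lem:2.1}(i)(b) applied \emph{inside} the MV-effect algebra $\B(E)$ gives $n_c c=n_c c\wedge\bigvee_{\B(E)}\{k_a a\}=\bigvee_{\B(E)}\{n_c c\wedge k_a a\}=k_c c<n_c c$. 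If $c\wedge a=0$ for all $a\in A$ then each $k_a a\leq(n_c c)'$, and the paper concludes $z\leq(n_c c)'$, contradicting $n_c c\leq z$. Once $z\in\Mea(E)$, Theorem~\ref{popismeager}(iv) and your upper-bound trick finish the job.

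Your proof of the stronger claim (every atom $b\leq z$ already lies in $A$) breaks at the ``regrouping'' step $z=w_A\oplus w_B$. Neither $M$ nor $E$ is assumed complete, and an infinite orthogonal sum $\bigoplus_M\{l_c c\mid c\in\mathcal C\}$ need not split along a partition of its index set: by definition $w_A=\bigoplus_M\{k_a a\mid a\in A\}$ is the supremum in $M$ of the finite partial sums over $A$, and nothing you have established guarantees that this supremum exists. Your next sentence (``since atomic blocks are bifull in $E$ and $\B(E)$ is full in $E$, this forces $w_A\in\B(E)$'') is valid only \emph{after} $w_A$ is known to exist; it does not produce existence. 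In fact the argument is circular: the natural way to get $\bigvee_E\{k_a a\mid a\in A\}$ to exist and equal $z$ is to know $[0,z]_E\subseteq\B(E)$ (so that for any upper bound $u$ one has $u\wedge z\in\B(E)$), which by Theorem~\ref{popismeager}(iv) needs $z\in\Mea(E)$ --- precisely the conclusion the claim is meant to deliver. A smaller point in the same passage: Statement~\ref{lem:2.0} only provides \emph{some} set of distinct atoms in the decomposition of $z$ in $M$; identifying it with the set of all atoms of $M$ below $z$, and $l_c$ with $\max\{k:kc\leq z\}$, requires the MV-computation from the proof of Theorem~\ref{popismeager}(ii), which you invoke only implicitly.
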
}
\begin{proof} (i): Let $X\subseteq \B(E)\cap \Mea(E)$. Assume first that 
$z=\bigvee_{\B(E)} X$ exists. Any $x\in X$ is by Theorem \ref{popismeager} 
of the form $x=\bigvee_E\{k^{x}_a a \mid \ a\in A_x\}=%
\bigvee_{\B(E)}\{k^{x}_a a \mid \ a\in A_x\}$, 
$A_x\subseteq \B(E)\cap \Mea(E)$. Hence 
$z=\bigvee_{\B(E)}\{\bigvee_{\B(E)}\{k^{x}_a a \mid \ a\in A_x\} \mid 
x\in X\}$. Let us put $k_a=\text{max}\{k^{x}_a \mid x\in X\}< n_a$. 
Then $z=\bigvee_{\B(E)}\{k_a a \mid a\in A_x, x\in X\}$. 

First, we shall show that $z\in \Mea(E)$. Assume that there is 
$y\not =0$, $y\leq z$, $y\in \Sh(E)$. Then there is an atom $c\in E$ 
such that $c\leq y$ i.e., also $n_c c\leq y\leq z$. Either $c\in A_x$ 
for some $x\in X$ or $c\wedge a=0$ for all $a\in A_x, x\in X$. Let 
$c\in A_x$ for some $x\in X$. Then $n_c c\in B(E)$. Therefore 
$$
\begin{array}{r c l}
n_c c&=&n_c c \wedge z= %
n_c c \wedge \bigvee_{\B(E)}\{k_a a \mid a\in A_x, x\in X\}\\
&=&\bigvee_{\B(E)}\{n_c c \wedge k_a a \mid a\in A_x, x\in X\}%
=k_c c < n_c c,\phantom{\text{\huge I}}
\end{array}
$$
\noindent{}a contradiction. Now, let $c\wedge a=0$ for all $a\in A_x, x\in X$.
Then $c\comp a$ yields that $k_a a \leq (n_c c)'\in C(E)$. Hence 
$z\leq (n_c c)'$. But $n_c c=n_c c \wedge z\leq n_c c \wedge (n_c c)'=0$ and 
we have a contradiction again. Hence $z\in \Mea(E)$.

Now, let $u\in E$ be an upper bound of $X$. Then also 
$u\wedge z$ is an upper bound of $X$, $u\wedge z\leq z\in \B(E)\cap\Mea(E)$. 
From Theorem \ref{popismeager} we have that 
$u\wedge z\in \B(E)\cap\Mea(E)$. Hence $z\leq u\wedge z \leq u$ i.e., 
$z=\bigvee_{E} X$.

Now, assume that $\bigvee_{E} X$ exists. Then $\bigvee_{E} X=\bigvee_{\B(E)} X$ 
by Statement \ref{lem:2.1}, (i). Hence $\bigvee_{E} X\in \Mea(E)$ by the above argument.

\noindent{}(ii): It follows immediately from (i) because $\B(E)\cap \Mea(E)$ is a 
downset in $\B(E)$.
\end{proof}

The following statement is well known.

\begin{statement}\label{shdombe} \label{sharpbe} Let $E$ be a sharply dominating 
Archime\-dean atomic  lattice effect algebra.
Then
\begin{enumerate}
\item[\rm(i)]  {\rm{}\cite[Theorem 3.4]{wujunde}} For every $x\in E, x\neq 0$ 
there exists the unique $w_x\in \Sh(E)$, unique set of atoms $\{a_{\alpha}|\alpha\in\Lambda\}$ and
unique positive integers $k_{\alpha}\neq ord(a_{\alpha})$ such
that
$$x=w_x\oplus (\bigoplus\{k_{\alpha}a_{\alpha}|\alpha\in\Lambda\}).$$
We call such a decomposition the 
{\em basic decomposition} ({\em{}BDE} for short) of $x$.
\item[\rm(ii)] {\rm{}\cite[Theorem 3.2]{PR6x}} $B(E)$ is sharply dominating and  
for every $x\in B(E), x\neq 0$ 
there exists the unique $w_x\in
C(E)$, unique set $\{a_{\alpha}|\alpha\in\Lambda\}\subseteq B(E)$ of atoms of $E$  and
unique positive integers $k_{\alpha}\neq \ord(a_{\alpha})$ such
that
$$x=w_x\oplus (\bigoplus\{k_{\alpha}a_{\alpha}|\alpha\in\Lambda\}).$$
\item[\rm(iii)] {\rm{}\cite[Theorem 3.1]{PR6x}} Let $M\subseteq E$ be an 
atomic block of $E$. Then 
$M$ is sharply dominating and, for every $x\in M$, there exists 
BDE of $x$ in $M$ 
and it coincides with BDE of $x$ in $E$.
\end{enumerate}
\end{statement}

\begin{proposition}\label{meagerblock}Let $E$ be 
a sharply dominating atomic Archime\-dean  lattice effect algebra 
and let $B\subseteq E$  be an
atomic block of $E$. Then $\Mea(B)\subseteq \Mea(E)$.
\end{proposition}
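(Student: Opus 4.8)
The plan is to reduce everything to the structure of meager elements of $B$ supplied by Theorem~\ref{popismeager}, combined with the fact — already used in the proof of Theorem~\ref{popismeager}, (iii), and proved in \cite[Lemma 2.7]{PR6x} — that an atomic block $B$ is a bifull sub-lattice effect algebra of $E$. Consequently, all meets, joins and orthogonal sums existing in $B$ are inherited from $E$, the order $n_a=\ord(a)$ of an atom $a$ of $B$ is the same computed in $B$ or in $E$ (because a sub-effect algebra is closed under the existing multiples of its elements), and every atom of $B$ is an atom of $E$ (Statement~\ref{lem:2.2}, (i)). So first I would dispose of the trivial case $x=0$, and then apply Theorem~\ref{popismeager}, (i)--(ii) to the atomic Archimedean lattice effect algebra $B$ and to $x\in\Mea(B)$, transporting the resulting join along bifullness of $B$, to obtain $x=\bigvee_E\{k^x_a a\mid a\in A_x\}$, where $A_x$ is the set of atoms of $B$ below $x$, $k^x_a=\max\{k\mid ka\le x\}$, and $1\le k^x_a<n_a$ for every $a\in A_x$.

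The key step I would prove next is that $x\le a'$ for every $a\in A_x$. Fix $a_0\in A_x$. For $a\in A_x$ with $a\neq a_0$, the atoms $a_0,a$ of the block $B$ are compatible and distinct, so $a_0\wedge a=0$ and $a_0\le a'$; by Statement~\ref{lem:2.1}, (iii) this yields $a_0\le(k^x_a a)'$, i.e.\ $k^x_a a\le a_0'$. Moreover $k^x_{a_0}<n_{a_0}$ means $(k^x_{a_0}+1)a_0$ exists, i.e.\ $k^x_{a_0}a_0\le a_0'$. Hence $k^x_a a\le a_0'$ for all $a\in A_x$, and therefore $x=\bigvee_E\{k^x_a a\mid a\in A_x\}\le a_0'$.

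Then I would finish as follows. Let $c$ be any atom of $E$ with $c\le x$. For each $a\in A_x$ we have $c\le x\le a'$, hence $c\comp a$ (in a lattice effect algebra $p\le q'$ forces $p\comp q$: by Statement~\ref{lem:2.1}, (vi), $p\oplus q=(p\vee q)\oplus(p\wedge q)$, while also $p\oplus q=p\oplus(q\ominus(p\wedge q))\oplus(p\wedge q)$, and cancelling $p\wedge q$ gives $p\vee q=p\oplus(q\ominus(p\wedge q))$). Suppose $c\neq a$ for every $a\in A_x$. Then $c\wedge a=0$ and $c\le a'$ for all $a\in A_x$, so Statement~\ref{lem:2.1}, (iii) gives $c\wedge k^x_a a=0$ and $c\le(k^x_a a)'$, hence $c\comp k^x_a a$, for every $a\in A_x$; Statement~\ref{lem:2.1}, (i) then yields $c=c\wedge x=c\wedge\bigvee_E\{k^x_a a\mid a\in A_x\}=\bigvee_E\{c\wedge k^x_a a\mid a\in A_x\}=0$, a contradiction. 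So $c=a$ for some $a\in A_x$; since $k^x_c<n_c$, this gives $n_c c\not\le x$. Finally, if some $v\in\Sh(E)$ satisfied $v\le x$ and $v\neq0$, atomicity of $E$ would supply an atom $c\le v$, and Statement~\ref{lem:2.1}, (v) would force $n_c c\le v\le x$, contradicting $n_c c\not\le x$; hence $v=0$, i.e.\ $x\in\Mea(E)$.

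The main obstacle is exactly the key step of the second paragraph: one must be certain that the orthogonal/join decomposition of $x$ that lives inside $B$ still pins $x$ down in the possibly much larger algebra $E$ — concretely, that $x\le a'$ for all $a\in A_x$, so that no atom of $E$ lying outside $B$ can occur below $x$. This is where bifullness of $B$ and the compatibility calculus of Statement~\ref{lem:2.1} (parts (i), (iii), (v), (vi)) are essential; everything else is routine bookkeeping. Note that the argument in fact uses only that $E$ is an atomic Archimedean lattice effect algebra and not that it is sharply dominating.
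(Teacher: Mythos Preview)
Your proof is correct, but it takes a genuinely different route from the paper's. The paper's argument is a three-line appeal to the uniqueness of the basic decomposition: having written $x=0\oplus\bigoplus_{B}\{k_\alpha a_\alpha\}$ with $k_\alpha<n_{a_\alpha}$ via Theorem~\ref{popismeager}(ii), bifullness of $B$ gives the same orthogonal sum in $E$, and then Statement~\ref{sharpbe}(i) (the BDE in $E$, which requires sharp domination) forces the sharp part $w_x$ of this decomposition to be $0$, i.e.\ $\widetilde{x}=0$. Your argument instead works atom by atom: you show $x\le a'$ for every $a\in A_x$, deduce that every atom of $E$ under $x$ must already lie in $A_x$, and conclude that no $n_c c$ can sit below $x$. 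This is longer but more elementary, and---as you correctly observe---it never uses the sharp-domination hypothesis, so it actually proves a slightly stronger statement than the paper's formulation. The trade-off is that the paper buys brevity by invoking the heavier uniqueness result, while you pay for generality with the compatibility computation of your second paragraph.
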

\begin{proof} Let $x\in \Mea(B)$. Then by Theorem \ref{popismeager}, (ii) 
$x=0\oplus (\bigoplus_{B}\{k_{\alpha}a_{\alpha}|\alpha\in\Lambda\})$ 
for a set of atoms $\{a_{\alpha}|\alpha\in\Lambda\}$ of $B$ and
positive integers $k_{\alpha}\neq ord(a_{\alpha})$. Since  
$B$ is a bifull sub-lattice effect algebra of $E$ 
(see \cite[Lemma 2.7 (i)]{PR6x}) we obtain  that 
$x=0\oplus (\bigoplus_{E}\{k_{\alpha}a_{\alpha}|\alpha\in\Lambda\})$.
As $E$ is sharply dominating we have from Statement \ref{sharpbe}, (i) 
that 
$\widetilde{x}=0$ and hence  $x\in \Mea(E)$.
\end{proof}

Let us recall the following statement

\begin{statement}\label{lemkaol}{\rm\cite[Lemma 2]{kaolparie}}
 Let $(E;\oplus,0,1)$ be an Archimedean atomic lattice effect
algebra, $x, y\in E$, $x\comp y$. Then there is  an atomic block $B$ 
of $E$ such that $x, y\in B$. 
\end{statement}

Similarly to \cite[Proposition 23]{jenca} for 
complete lattice effect algebras we have now the following proposition.

{\renewcommand{\labelenumi}{{\normalfont  (\roman{enumi})}}
\begin{proposition} \label{shaco}
Let  $E$ be a sharply dominating atomic Archime\-dean  lattice
effect algebra and let $x, y\in \Mea(E)$.
Then 
\begin{enumerate}
\settowidth{\leftmargin}{(iiiii)}
\settowidth{\labelwidth}{(iii)}
\settowidth{\itemindent}{(ii)}
\item $x\comp y$ if and only if $x\vee y\in \Mea(E)$, 
\item If $x\oplus y$ exists and $x\oplus y=z\in\Sh(E)$  then 
$z=\widehat{x}=\widehat{y}$. 
\end{enumerate}
\end{proposition}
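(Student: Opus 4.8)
The plan is to reduce both parts to facts already available: part (ii) is essentially Lemma \ref{jmpy2}(ii) read off directly, and part (i) follows from the structure theory of atomic blocks together with Theorem \ref{popismeager} and Propositions \ref{mvme} and \ref{meagerblock}.

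For the forward implication of (i): assume $x\comp y$. By Statement \ref{lemkaol} there is an atomic block $B$ of $E$ containing both $x$ and $y$. Since $B$ is a sub-lattice effect algebra of $E$, the operations ${}'$, $\wedge$, $\oplus$ of $B$ are restrictions of those of $E$; hence $\Sh(B)=\Sh(E)\cap B$, $B$ is Archimedean, and $B$ is an MV-effect algebra (being a block). From $\Sh(B)=\Sh(E)\cap B$ one sees at once that $x,y\in\Mea(B)$. Applying Proposition \ref{mvme}(iii) to the atomic Archimedean MV-effect algebra $B$, the set $\Mea(B)$ is a lattice ideal of $B$, so $x\vee_B y\in\Mea(B)$; since $B$ is a sublattice of $E$ we have $x\vee_B y=x\vee y$, and by Proposition \ref{meagerblock} $\Mea(B)\subseteq\Mea(E)$. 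Therefore $x\vee y\in\Mea(E)$.

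For the backward implication of (i): assume $x\vee y\in\Mea(E)$ (the join exists because $E$ is a lattice). By Statement \ref{lem:2.2}(iv) the element $x\vee y$ lies in some atomic block $M$ of $E$, and then Theorem \ref{popismeager}(iii), applied to the meager element $x\vee y$, gives $[0,x\vee y]\subseteq M$. In particular $x,y\in M$, and since $M$ is a block its elements are pairwise compatible, so $x\comp y$.

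For (ii): because $E$ is sharply dominating, $\Sh(E)$ is a sub-effect algebra of $E$ (as recalled above from \cite{cattaneo}) and $\widehat{x}$ exists. Thus all hypotheses of Lemma \ref{jmpy2}(ii) hold — $x,y\in\Mea(E)$, $x\oplus y$ exists and equals $z\in\Sh(E)$, and $E$ is a lattice effect algebra — and that lemma yields $\widehat{x}=z$ together with the existence of $\widehat{y}$ and $\widehat{y}=\widehat{\widehat{x}\ominus x}=z$; hence $z=\widehat{x}=\widehat{y}$. The only delicate point in the whole argument is the transfer of meagerness across the block $B$ in the forward direction of (i): one must verify $\Sh(B)=\Sh(E)\cap B$ (so that $x,y\in\Mea(B)$), that $B$ really is an atomic Archimedean MV-effect algebra, and that the join computed in $B$ coincides with the one in $E$ — after which Propositions \ref{mvme} and \ref{meagerblock} do the rest.
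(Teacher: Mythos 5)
Your proof is correct and follows essentially the same route as the paper: Statement \ref{lemkaol} plus Propositions \ref{mvme} and \ref{meagerblock} for the forward direction of (i), Theorem \ref{popismeager}(iii) for the converse, and Lemma \ref{jmpy2}(ii) for (ii). The extra verifications you flag (that $\Sh(B)=\Sh(E)\cap B$ so meagerness transfers into the block, and that the join in $B$ agrees with the join in $E$) are exactly the points the paper leaves implicit, and they hold because blocks are full sub-lattice effect algebras.
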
}
\begin{proof} (i): Assume first that $x\comp y$. Then by Statement 
\ref{lemkaol} there is   an atomic block $B$ 
of $E$ such that $x, y\in B$. Since 
$B$ is an atomic Archimedean MV-effect algebra and 
$E$ is sharply dominating we have from Propositions 
\ref{mvme} and \ref{meagerblock} that 
$x\vee y\in \Mea(B)\subseteq \Mea(E)$.

Now, assume that $x\vee y\in \Mea(E)$. Then from Theorem 
\ref{popismeager}, (iii) we obtain that $[0, x\vee y]$ is 
an MV-effect algebra. This yields that $x\comp y$. 

(ii): It follows immediately from Lemma \ref{jmpy2}. 
\end{proof}

\begin{theorem}\label{popissharp} Let $E$ be 
a sharply dominating atomic Archime\-dean  lattice
effect algebra. Then for every $x\in E, x\neq 0$ there exists 
unique set of atoms $\{a_{\alpha}\mid \alpha\in\Lambda\}$ {\rm (}namely 
$\{a\in E \mid a\ \text{an atom of}\ E,$ $ a\leq x\ominus \widetilde{x}\}${\rm )} 
and unique positive integers $k_{\alpha}\neq n_{a_{\alpha}}$ {\rm (}namely 
$k_{\alpha}={\rm\text{\rm{}max}}\{ k\in {\mathbb N}\mid k{a_{\alpha}} 
\leq x\}${\rm )}  such that
$$
\begin{array}{r@{\,\,}c@{\,\,}l}
x&=&\widetilde{x}\oplus (\bigoplus\{k_{\alpha}a_{\alpha}\mid \alpha\in\Lambda\}).\\
& &\text{\phantom{\tiny and}}\\
\text{Moreover,}\quad& &\text{\phantom{\tiny and}}\\
x&=&\widetilde{x}\oplus (\bigvee\{k_{\alpha}a_{\alpha}\mid \alpha\in\Lambda\})=%
\widetilde{x}\vee (\bigvee\{k_{\alpha}a_{\alpha}\mid \alpha\in\Lambda\}),\phantom{\text{Moreover,}}\\
0&=&\widetilde{x}\wedge (\bigvee\{k_{\alpha}a_{\alpha}\mid \alpha\in\Lambda\})=%
\widetilde{x}\wedge (\bigoplus\{k_{\alpha}a_{\alpha}\mid \alpha\in\Lambda\}),\phantom{\text{\huge I}}\\
& &\text{\phantom{\tiny and}}\\
\widehat{x}&=&\bigvee\{\widetilde{x}\oplus n_{a_{\alpha}}a_{\alpha}\mid \alpha\in\Lambda\}=
\widetilde{x}\oplus (\bigvee\{n_{a_{\alpha}}a_{\alpha}\mid \alpha\in\Lambda\})\\
&=&\widetilde{x}\oplus (\bigoplus\{n_{a_{\alpha}}a_{\alpha}\mid \alpha\in\Lambda\})%
=\widetilde{x}\vee (\bigvee\{n_{a_{\alpha}}a_{\alpha}\mid \alpha\in\Lambda\}),%
\phantom{\text{\huge I}}\\
0&=&\widetilde{x}\wedge (\bigvee\{n_{a_{\alpha}}a_{\alpha}\mid \alpha\in\Lambda\})=%
\widetilde{x}\wedge (\bigoplus\{n_{a_{\alpha}}a_{\alpha}\mid \alpha\in\Lambda\}),\phantom{\text{\huge I}}\\
& &\text{\phantom{\tiny and}}\\
\widehat{x}&=&x\oplus%
\bigoplus\{(n_{a_{\alpha}}-k_{\alpha})a_{\alpha}\mid \alpha\in\Lambda\}\\
&=&x\oplus (\bigvee\{(n_{a_{\alpha}}-k_{\alpha})a_{\alpha}\mid \alpha\in\Lambda\}).%
\phantom{\text{\huge I}}
\end{array}$$
\end{theorem}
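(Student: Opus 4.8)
The plan is to reduce everything to the structure theory already established for meager elements.  Recall from Statement~\ref{shdombe},~(i) that $x$ has a basic decomposition $x=w_x\oplus(\bigoplus\{k_\alpha a_\alpha\mid\alpha\in\Lambda\})$ with $w_x\in\Sh(E)$, the $a_\alpha$ atoms of $E$, and $k_\alpha\neq n_{a_\alpha}$.  The first step is to identify $w_x$ with $\widetilde{x}$ and the orthogonal sum $\bigoplus\{k_\alpha a_\alpha\mid\alpha\in\Lambda\}$ with $x\ominus\widetilde{x}$.  For this I invoke Corollary~\ref{gejza} (via Lemma~\ref{jpy2},~(i)): $x=\widetilde{x}\oplus(x\ominus\widetilde{x})$ is \emph{the} unique decomposition of $x$ into a sharp part plus a meager part, and since $w_x\in\Sh(E)$ while $\bigoplus\{k_\alpha a_\alpha\}$ is meager (being an orthogonal sum of proper multiples of atoms, by Theorem~\ref{popismeager},~(ii) applied in reverse, or directly because no $n_{a_\alpha}a_\alpha$ sits below it), uniqueness forces $w_x=\widetilde{x}$ and $\bigoplus\{k_\alpha a_\alpha\}=x\ominus\widetilde{x}$.

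With that identification in hand, the second step applies Theorem~\ref{popismeager} to the meager element $x\ominus\widetilde{x}$.  Part~(ii) of that theorem says the set of atoms below $x\ominus\widetilde{x}$ is exactly $A_{x\ominus\widetilde{x}}=\{a\mid a\text{ an atom of }E,\ a\leq x\ominus\widetilde{x}\}$, that $k_\alpha=k^{x\ominus\widetilde{x}}_{a_\alpha}=\max\{k\in\mathbb N\mid ka_\alpha\leq x\ominus\widetilde{x}\}$, and that this set of multiples is the \emph{unique} family whose orthogonal sum is $x\ominus\widetilde{x}$; this gives the asserted uniqueness of $\{a_\alpha\}$ and $\{k_\alpha\}$.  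I still need the cosmetic point that $\max\{k\mid ka_\alpha\leq x\ominus\widetilde{x}\}=\max\{k\mid ka_\alpha\leq x\}$: since $a_\alpha\leq x\ominus\widetilde{x}\leq x$ and $x=\widetilde{x}\oplus(x\ominus\widetilde{x})$ with $\widetilde{x}$ sharp and $a_\alpha$ meager, any $ka_\alpha\leq x$ must actually lie below $x\ominus\widetilde{x}$ (because $ka_\alpha$ is meager, apply Corollary~\ref{gejza}/Lemma~\ref{jpy2} to $ka_\alpha$ inside $[0,x]$, or simply use that $ka_\alpha\wedge\widetilde{x}=0$ forces $ka_\alpha=ka_\alpha\ominus(ka_\alpha\wedge\widetilde x)\le x\ominus\widetilde x$ once compatibility is noted).

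The third step produces the displayed formulas for $x$ and $\widehat{x}$.  Write $m=\bigoplus\{k_\alpha a_\alpha\mid\alpha\in\Lambda\}=x\ominus\widetilde{x}$.  Since $m$ is a meager element, Lemma~\ref{joyka} (or directly Corollary~\ref{fsumato} on finite subsystems followed by Statement~\ref{lem:2.1},~(vii)) converts the orthogonal sum into a join: $m=\bigvee\{k_\alpha a_\alpha\mid\alpha\in\Lambda\}$, and Lemma~\ref{joyka} applied with $b=\widetilde{x}$ gives $\widetilde{x}\oplus m=(\widetilde{x}\vee m)\oplus(\widetilde{x}\wedge m)$; but $\widetilde{x}\wedge m=\widetilde{x}\wedge(x\ominus\widetilde{x})=0$ by Lemma~\ref{jpy2},~(i) (a sharp element meets a meager element in $0$, as $m\le\widetilde x{}'$), so $x=\widetilde{x}\oplus m=\widetilde{x}\vee m$, which is the first block of equalities together with the line $0=\widetilde{x}\wedge(\bigvee k_\alpha a_\alpha)=\widetilde{x}\wedge(\bigoplus k_\alpha a_\alpha)$.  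For $\widehat{x}$: Theorem~\ref{popismeager},~(v) applied to the meager element $m=x\ominus\widetilde{x}$ gives $\widehat{m}=\bigvee\{n_{a_\alpha}a_\alpha\mid\alpha\in\Lambda\}=\bigoplus\{n_{a_\alpha}a_\alpha\mid\alpha\in\Lambda\}$, and Lemma~\ref{jpy2},~(ii) gives $\widehat{x}=\widetilde{x}\oplus\widehat{x\ominus\widetilde{x}}=\widetilde{x}\oplus\widehat{m}$; combining with Lemma~\ref{joyka} as before (and using $\widetilde x\wedge\widehat m=0$ since $\widehat m\le\widetilde x{}'$, again by Lemma~\ref{jpy2},~(i)) yields $\widehat{x}=\widetilde{x}\oplus(\bigvee n_{a_\alpha}a_\alpha)=\widetilde{x}\oplus(\bigoplus n_{a_\alpha}a_\alpha)=\widetilde{x}\vee(\bigvee n_{a_\alpha}a_\alpha)$, and pushing $\widetilde x$ inside using Statement~\ref{lem:2.1},~(vii) gives $\widehat{x}=\bigvee\{\widetilde{x}\oplus n_{a_\alpha}a_\alpha\mid\alpha\in\Lambda\}$.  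Finally $\widehat{x}\ominus x=\widehat{m}\ominus m=\bigoplus\{(n_{a_\alpha}-k_\alpha)a_\alpha\}=\bigvee\{(n_{a_\alpha}-k_\alpha)a_\alpha\}$ is the content of Theorem~\ref{popismeager},~(v) for $m$, and adding $x$ back (via Statement~\ref{lem:2.1},~(vii) and $x\wedge\bigvee(n_{a_\alpha}-k_\alpha)a_\alpha$ being handled by compatibility of all the multiples of atoms involved) gives $\widehat{x}=x\oplus\bigoplus\{(n_{a_\alpha}-k_\alpha)a_\alpha\}=x\oplus(\bigvee\{(n_{a_\alpha}-k_\alpha)a_\alpha\})$.

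The main obstacle I anticipate is not any single deep fact but the bookkeeping of when an orthogonal sum equals the corresponding join and when a join commutes past $\oplus$: every such conversion needs the relevant elements to be pairwise compatible and the partial sums to be dominated by the appropriate complements.  Here this is guaranteed because all the $a_\alpha$ lie below a common meager element and hence below a common atomic block (one uses Theorem~\ref{popismeager},~(iii), or Statement~\ref{lemkaol} pairwise), so Corollary~\ref{fsumato} and Lemma~\ref{joyka} apply; but one must be careful to state explicitly that the systems $\{k_\alpha a_\alpha\}$, $\{n_{a_\alpha}a_\alpha\}$ and $\{(n_{a_\alpha}-k_\alpha)a_\alpha\}$ are orthogonal before invoking Statement~\ref{lem:2.1},~(vii).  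Once that is set up, the proof is a routine assembly of the preceding lemmas.
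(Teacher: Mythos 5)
Your proposal is correct and follows essentially the same route as the paper: reduce everything to Theorem \ref{popismeager} applied to the meager element $x\ominus\widetilde{x}$, use Lemma \ref{jpy2} (i)--(ii) for the sharp/meager decomposition and for $\widehat{x}=\widetilde{x}\oplus\widehat{x\ominus\widetilde{x}}$, and Statement \ref{lem:2.1}, (vii) to push $\widetilde{x}\oplus{}$ through the join. If anything, you are slightly more explicit than the paper on the point that $\max\{k\mid ka_\alpha\leq x\}$ coincides with $\max\{k\mid ka_\alpha\leq x\ominus\widetilde{x}\}$, which the paper leaves implicit.
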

\begin{proof} The first part of the statement follows immediately 
from Statement \ref{sharpbe}, (i)  
and Theorem \ref{popismeager}. 
Let us show the second, third and fourth parts. 

We have by Theorem \ref{popismeager} that 
$x\ominus \widetilde{x}=\bigoplus\{k_{\alpha}a_{\alpha}\mid \alpha\in\Lambda\}%
=\bigvee\{k_{\alpha}a_{\alpha}\mid \alpha\in\Lambda\}$. 
Hence by Lemma \ref{jpy2}, (i)
$(x\ominus \widetilde{x})\wedge \widetilde{x} =0$ and 
$\widetilde{x}\vee (x\ominus \widetilde{x})= %
\widetilde{x}\oplus (x\ominus \widetilde{x})=x$. Since 
$\widehat{x\ominus \widetilde{x}}$ exists we have from 
Theorem \ref{popismeager}, (v) that 
$\widehat{x\ominus \widetilde{x}}=%
\bigvee\{n_{a_{\alpha}}a_{\alpha}\mid \alpha\in\Lambda\}$. 
Therefore by by Lemma \ref{jpy2}, (ii) we have that 
$\widehat{x}=\widetilde{x}\oplus \widehat{x\ominus \widetilde{x}}=%
\widetilde{x}\vee \widehat{x\ominus \widetilde{x}}$  and 
$\widetilde{x}\wedge \widehat{x\ominus \widetilde{x}}=0$. 
 Moreover,  by Statement \ref{lem:2.1}, (vii) %
$\widetilde{x}\oplus (\bigvee\{n_{a_{\alpha}}a_{\alpha}\mid \alpha\in\Lambda\})=%
\bigvee\{\widetilde{x}\oplus n_{a_{\alpha}}a_{\alpha}\mid \alpha\in\Lambda\}$.

The fourth part follows immediately from the precedings parts. Namely, 
by Theorem \ref{popismeager}, (v)
$$
\begin{array}{@{}r@{\, }c@{\, }l}
\widehat{x}\ominus x&=&(\widehat{x}\ominus \widetilde{x})\ominus %
({x}\ominus \widetilde{x})= %
\bigoplus\{(n_{a_{\alpha}}-k_{\alpha})a_{\alpha}\mid \alpha\in\Lambda\}\\
&=&\bigvee\{(n_{a_{\alpha}}-k_{\alpha})a_{\alpha}\mid \alpha\in\Lambda\}.%
\phantom{\text{\huge I}}
\end{array}
$$
\end{proof}

{\renewcommand{\labelenumi}{{\normalfont  (\roman{enumi})}}
\begin{theorem}\label{ccnied} Let $E$ be a 
sharply dominating  atomic Archimedean lattice effect algebra. 
Then  the following conditions 
are equivalent: 
\begin{enumerate}
\settowidth{\leftmargin}{(iiiii)}
\settowidth{\labelwidth}{(iii)}
\settowidth{\itemindent}{(ii)}
\item $\B(E)$ is bifull in $E$. 
\item $\C(E)$ is bifull in $E$.
\end{enumerate}
\end{theorem}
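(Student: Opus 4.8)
The plan is to prove the two implications separately; \textup{(i)}$\Rightarrow$\textup{(ii)} is short, while \textup{(ii)}$\Rightarrow$\textup{(i)} carries the weight. For \textup{(i)}$\Rightarrow$\textup{(ii)}, recall that $\C(E)=\Sh(\B(E))$ and that $\B(E)$ is itself a sharply dominating (Statement~\ref{sharpbe}\,(ii)) atomic Archimedean lattice effect algebra, atomicity coming from the basic decompositions of Statement~\ref{sharpbe}. Hence Statement~\ref{shdisbifullcv} applied inside $\B(E)$ shows $\C(E)$ is $\bigvee$-bifull in $\B(E)$. Since $\bigvee$-bifullness is transitive (if $D_1\subseteq D_2\subseteq D_3$ with $D_1$ $\bigvee$-bifull in $D_2$ and $D_2$ $\bigvee$-bifull in $D_3$, then for $X\subseteq D_1$ the existence of $\bigvee_{D_1}X$, $\bigvee_{D_2}X$, $\bigvee_{D_3}X$ are equivalent and the values agree), combining this with the hypothesis gives $\C(E)$ bifull in $E$. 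Throughout we use Remark~\ref{combif}, that for sub-lattice effect algebras ``bifull'' coincides with ``$\bigvee$-bifull''.

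For \textup{(ii)}$\Rightarrow$\textup{(i)}, fix $X\subseteq\B(E)$ for which $z:=\bigvee_{\B(E)}X$ exists; by fullness of $\B(E)$ (Statement~\ref{lem:2.1}\,(ii)) it suffices to show $\bigvee_E X$ exists, as it then lies in $\B(E)$ and equals $z$. Write the basic decomposition $z=\widetilde z\oplus z_M$ with $\widetilde z\in\C(E)$ and $z_M=z\ominus\widetilde z\in\B(E)\cap\Mea(E)$ (Corollary~\ref{gejza}, Statement~\ref{sharpbe}). As $\widetilde z$ is central, $E\cong[0,\widetilde z]\times[0,\widetilde z']$ via $u\mapsto(u\wedge\widetilde z,u\wedge\widetilde z')$, and a join of a subset of $E$ exists iff both coordinate joins do, with coordinatewise value. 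For each $x\in X$ we have $x\wedge\widetilde z'\le z\wedge\widetilde z'=z_M$, so $\{x\wedge\widetilde z':x\in X\}\subseteq\B(E)\cap\Mea(E)$ with join $z_M$ in $\B(E)$ (Statement~\ref{lem:2.1}\,(i)), and Proposition~\ref{meagerbe}\,(i) upgrades this to $\bigvee_E\{x\wedge\widetilde z':x\in X\}=z_M$. Hence everything reduces to proving $\bigvee_E\{x\wedge\widetilde z:x\in X\}=\widetilde z$ (the other coordinate), after which recombining gives $\bigvee_E X=z$.

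Set $Y=\{x\wedge\widetilde z:x\in X\}\subseteq\B(E)$, so $\bigvee_{\B(E)}Y=\widetilde z$. For $y\in Y$ one has $\widehat y\in\C(E)$: by Theorem~\ref{popissharp} applied to $y\in\B(E)$, $\widehat y$ is the orthogonal sum of $\widetilde y\in\C(E)$ and terms $n_b b$ with $b$ an atom below $y\ominus\widetilde y$; these atoms lie in $\B(E)$ (Theorem~\ref{popismeager}\,(iv)) and $n_b b\in\Sh(E)$ (Statement~\ref{lem:2.0}), hence $n_b b\in\C(E)$ since $\B(E)$ is an $MV$-effect algebra (where $\Sh=\C$), and $\C(E)$ being full the sum $\widehat y$ is again in $\C(E)$; moreover $\widehat y\le\widetilde z$ because $y\le\widetilde z\in\Sh(E)$. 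Any $c\in\C(E)$ with $c\ge\widehat y$ for all $y$ dominates every $y$, hence $c\ge\bigvee_{\B(E)}Y=\widetilde z$; thus $\bigvee_{\C(E)}\{\widehat y:y\in Y\}=\widetilde z$, and bifullness of $\C(E)$ gives $\bigvee_E\{\widehat y:y\in Y\}=\widetilde z$. Now take an upper bound $w$ of $Y$ in $E$; replacing $w$ by $w\wedge\widetilde z$ we may assume $w\le\widetilde z$, and we must show $w=\widetilde z$. As $\widehat w$ is sharp with $\widehat w\ge w\ge y$ for all $y$, $\widehat w\ge\bigvee_E\{\widehat y:y\in Y\}=\widetilde z$, so $\widehat w=\widetilde z$ and $m:=\widetilde z\ominus w\in\Mea(E)$.

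The main obstacle — where I expect most of the work — is the ``descent'': deducing $m=0$. Suppose $m\neq0$ and pick an atom $a\le m\le\widetilde z$. Since $a$ is compatible with each central $\widehat y$, Statement~\ref{lem:2.1}\,(i) gives $a=a\wedge\widetilde z=\bigvee_E\{a\wedge\widehat y:y\in Y\}$, so $a\le\widehat{y_0}$ for some $y_0\in Y$. Decomposing $\widehat{y_0}=\widetilde{y_0}\oplus\bigoplus\{n_bb:b\in B_{y_0}\}$ into pairwise orthogonal central summands, $a$ lies below exactly one of them. If $a\le\widetilde{y_0}$, then $\widetilde{y_0}\le y_0\le w$ gives $m\le\widetilde z\ominus\widetilde{y_0}$, whence $a\le\widetilde{y_0}\wedge(\widetilde z\ominus\widetilde{y_0})=0$ (Boolean complementation in the direct factor $[0,\widetilde z]$), a contradiction. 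If instead $a\le n_{b_0}b_0$ for some $b_0\in B_{y_0}$, I would first establish the auxiliary fact that $[0,n_{b_0}b_0]$ is the finite chain $\{0,b_0,2b_0,\dots,n_{b_0}b_0\}$: since $n_{b_0}b_0=\widehat{b_0}\in\C(E)$ this interval is a directly indecomposable atomic Archimedean direct factor of $E$ whose only sharp elements are $0$ and its top (using that $b_0$, being in the compatibility centre, is compatible with every sharp element there) and which contains the atom $b_0$, so by Statement~\ref{lem:2.2} it is the union of its atomic blocks, each an atomic Archimedean $MV$-effect algebra with top $\widehat{b_0}$ over the atom $b_0$ and hence equal to that chain. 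Granting this, $\{n_{b_0}b_0\wedge y:y\in Y\}$ is a set of elements of this chain with join $n_{b_0}b_0\wedge\widetilde z=n_{b_0}b_0$ in $\B(E)$ (Statement~\ref{lem:2.1}\,(i)), so $n_{b_0}b_0\le y_1\le w$ for some $y_1\in Y$; then $a\le m\le w'\le(n_{b_0}b_0)'$ forces $a\le n_{b_0}b_0\wedge(n_{b_0}b_0)'=0$, again impossible. Hence $m=0$, $w=\widetilde z$, and the proof is complete. The two genuinely delicate points are (a) routing both reductions — to Proposition~\ref{meagerbe} and to the bifullness of $\C(E)$ — uniformly through the single central element $\widetilde z$, and (b) the chain lemma for $[0,\widehat b]$ underpinning the second case of the descent.
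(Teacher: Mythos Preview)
Your argument is correct, but two small points deserve cleanup. In (i)$\Rightarrow$(ii), the claim that $\B(E)$ is atomic is neither justified by Statement~\ref{sharpbe} alone (when $x\in\C(E)$ the basic decomposition produces no atoms; atomicity of $\B(E)$ is precisely the content of the later Theorem~\ref{ccatomic}) nor needed, since sharp domination of $\B(E)$ already suffices for Statement~\ref{shdisbifullcv}\,(i). In the chain lemma for $[0,n_{b_0}b_0]$, your route via ``only sharp elements are $0$ and the top, hence each atomic block is the chain over $b_0$'' is circuitous, and the step ``each atomic block lies over $b_0$'' begs the question of uniqueness of the atom. The direct argument is cleaner: $b_0\in\B(E)$ forces $b_0\comp d$ for any atom $d$, so $d\ne b_0$ gives $d\wedge b_0=0$ and $d\le b_0'$, whence $d\wedge n_{b_0}b_0=0$ by Statement~\ref{lem:2.1}\,(iii); thus $b_0$ is the unique atom below $n_{b_0}b_0$ and Statement~\ref{lem:2.0} finishes. (This is exactly the argument appearing in the paper's proof of Theorem~\ref{ccatomic}.)

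Your overall strategy for (ii)$\Rightarrow$(i) genuinely differs from the paper's. The paper first treats the special case $\bigvee_{\B(E)}S=x\in\C(E)$ directly and then reduces the general $x\in\B(E)$ to it by the shift $s\mapsto s\oplus(\widehat{x}\ominus x)$, which sends $S$ to a set with $\B(E)$-join $\widehat{x}\in\C(E)$; after applying the special case, Statement~\ref{lem:2.1}\,(iv) cancels the shift. You instead split $E$ along the central element $\widetilde{z}$, handle the meager coordinate in one stroke via Proposition~\ref{meagerbe}, and reserve the atom-chasing for the sharp coordinate only. Your organization makes the role of Proposition~\ref{meagerbe} explicit and is perhaps more conceptual, at the cost of needing the chain lemma; the paper's shift trick is shorter and avoids the product decomposition entirely. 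The atom-descent cores are close cousins: the paper shows the offending atom $c$ equals some $a_\alpha\in\B(E)$ and then observes that $x\ominus c\in\B(E)$ is a strictly smaller upper bound of $S$, while you land $a$ inside the finite chain $[0,n_{b_0}b_0]$ and use finiteness of that chain to force $n_{b_0}b_0\le y_1\le w$.
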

\begin{proof}(i) $\implik$ (ii): Note that from  Statement \ref{sharpbe}, (ii) 
we know that $\B(E)$ is sharply dominating. 
Hence  by Statement \ref{shdisbifullcv} we obtain that 
$\C(E)=\Sh(\B(E))$ is bifull in $\B(E)$. Since $\B(E)$ is bifull in $E$ we have that 
$\C(E)$ is bifull in $E$.

\noindent{}(ii) $\implik$ (i): Let $S\subseteq \B(E)$ and 
$x=\bigvee_{\B(E)} S$ exists. 
Assume first that $x\in \C(E)$. Then
$$\begin{array}{@{\,}r@{\, }c@{\, }l}
x&=&\bigvee_{\B(E)} \{{s}\mid s\in S\}=%
\bigvee_{\B(E)} \{\widehat{{s}}\mid s\in S\}%
=\bigvee_{\C(E)} \{\widehat{{s}}\mid s\in S\}=%
\bigvee_{E} \{\widehat{s}\mid s\in S\}
\end{array}
$$
since $\C(E)$ is bifull in $E$ and $\widehat{{s}}\in \C(E)$.

Now, let $z\in E$, $z\geq s$ for all $s\in S$. Assume for a moment that 
$z\not\geq x$. Then $z\wedge x < x$ which yields that 
there is an atom $c\in E$ such that $c\leq x\ominus (x\wedge z)$. 
Then $c=c\wedge x= c\wedge \bigvee_{E} \{\widehat{{s}}\mid s\in S\}=
\bigvee_{E} \{c\wedge \widehat{{s}}\mid s\in S\}$ since $c\comp s$ and hence 
$x\comp \widehat{{s}}$ for all $a\in S$. Therefore, there 
is an element $s\in S$ such that 
$c\wedge \widehat{{s}}=c$ i.e. 
$c\leq \widehat{{s}}$.

By Theorem \ref{popissharp} and from Statement \ref{sharpbe}, (ii) 
we have that there exists the unique set 
$\{a_{\alpha}\mid \alpha\in\Lambda\}\subseteq \B(E)$ of atoms of 
$E$  and
unique positive integers $k_{\alpha}\neq n_{a_{\alpha}}$ such
that
$$
\begin{array}{@{}r c l}
s&=&\widetilde{s}\oplus %
(\bigoplus_{E}\{k_{\alpha}a_{\alpha}\mid \alpha\in\Lambda\})=%
\widetilde{s}\oplus (\bigvee_{E}\{k_{\alpha}a_{\alpha}\mid \alpha\in\Lambda\}),\\
& &\text{\phantom{\tiny and}}\\
\widehat{s}&=&%
\widetilde{s}\oplus (\bigvee_E\{n_{a_{\alpha}}a_{\alpha}\mid \alpha\in\Lambda\})%
=\widetilde{s}\vee (\bigvee_E\{n_{a_{\alpha}}a_{\alpha}\mid \alpha\in\Lambda\}).
\end{array}$$

This gives rise to 
$$
\begin{array}{r c l}
c&=& c \wedge \widehat{s}%
= (c \wedge \widetilde{s})\vee %
(c\wedge (\bigvee_E\{n_{a_{\alpha}}a_{\alpha}\mid \alpha\in\Lambda\}))
\end{array}
$$

\noindent{}since $c\comp \widetilde{s}\in\B(E)$ and 
$c\comp a_{\alpha}\in \B(E)$ for all $\alpha\in\Lambda$.

Assume first that $c=c \wedge \widetilde{s}$. Then from 
Statement \ref{lem:2.1}, (v) we have 
$n_c c \leq \widetilde{s}\leq z\wedge x$, a contradiction with 
$c\leq x\ominus (x\wedge z)$. So we obtain that 
$$
\begin{array}{r c l}
c&=&c\wedge (\bigvee_E\{n_{a_{\alpha}}a_{\alpha}\mid \alpha\in\Lambda\})%
=\bigvee_E\{c\wedge n_{a_{\alpha}}a_{\alpha}\mid \alpha\in\Lambda\}.
\end{array}
$$

Hence there is an atom $a_{\alpha}$ of $E$, $a_{\alpha}\in \B(E)$, 
$\alpha\in\Lambda$ such that $c=c\wedge n_{a_{\alpha}}a_{\alpha}$. 
Assume for a moment that $c\wedge a_{\alpha}=0$. We have that $c\comp a_{\alpha}$ 
i.e., $c\oplus a_{\alpha}$  exists. By Statement \ref{lem:2.1}, (iii) 
we have that  $c\wedge n_{a_{\alpha}}a_{\alpha}=0$, 
a contradiction. So we have shown that 
$c=a_{\alpha}\in \B(E)$. But $x\wedge c\leq x\ominus c < x$, $x\ominus c\in \B(E)$ 
and $x\ominus c$ is an upper bound of $S$, 
a contradiction with $x=\bigvee_{\B(E)} \{s\mid s\in S\}$.
Therefore $z\geq x$ and hence $\bigvee_{\B(E)} S=\bigvee_{E} S$.

Now, let us assume that $\bigvee_{\B(E)} S=x\in \B(E)$. Then we have that 
$$
\begin{array}{r@{\,}c@{\,}l}
\widehat{x}&=&x\oplus (\widehat{x}\ominus x)= 
\bigvee_{\B(E)} S \oplus (\widehat{x}\ominus x) = %
\bigvee_{\B(E)} \{s \oplus (\widehat{x}\ominus x)\mid s\in S\}\in\C(E). 
\end{array}
$$
Therefore by above considerations also $\widehat{x}= %
\bigvee_{E} \{s \oplus (\widehat{x}\ominus x)\mid s\in S\}$. This 
and Statement \ref{lem:2.1}, (iv) 
yield
$$
\begin{array}{r@{\,}c@{\,}l}
{x}&=&\widehat{x}\ominus (\widehat{x}\ominus x)=%
\left(\bigvee_{E} \{s \oplus (\widehat{x}\ominus x)\mid s\in S\}\right)%
\ominus (\widehat{x}\ominus x)=\bigvee_{E} S.%
\end{array}
$$

Conversely, let $S\subseteq \B(E)$ and 
$x=\bigvee_{E} S$ exists. Then by Statement \ref{lem:2.1}, (ii) we get 
that $x=\bigvee_{\B(E)} S$.
\end{proof}

{\renewcommand{\labelenumi}{{\normalfont  (\roman{enumi})}}
\begin{theorem}\label{ccatomic} Let $E$ be a sharply 
dominating  atomic Archimedean lattice effect algebra. Then  
the following conditions 
are equivalent: 
\begin{enumerate}
\settowidth{\leftmargin}{(iiiii)}
\settowidth{\labelwidth}{(iii)}
\settowidth{\itemindent}{(ii)}
\item $\B(E)$ is atomic. 
\item $\C(E)$ is atomic.
\end{enumerate}
\end{theorem}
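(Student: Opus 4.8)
The plan is to prove the two implications separately, mirroring the proof of Theorem~\ref{ccnied}: the main tools will be the identity $\C(E)=\Sh(\B(E))$, the basic decomposition of Statement~\ref{sharpbe}, and the fact that $\B(E)$ is itself a sharply dominating Archimedean lattice effect algebra (Statement~\ref{sharpbe}(ii)) which is moreover an $MV$-effect algebra, so that all its elements are pairwise compatible.

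For $(i)\Rightarrow(ii)$ I would argue as follows. Assume $\B(E)$ is atomic and let $0\neq w\in\C(E)\subseteq\B(E)$. Atomicity of $\B(E)$ gives an atom $a$ of $\B(E)$ with $a\leq w$; since $E$ is Archimedean, $\ord(a)<\infty$, so by Statement~\ref{lem:2.1}(v) applied to the lattice effect algebra $\B(E)$ the element $n_aa$ is the smallest sharp element of $\B(E)$ over $a$, hence $n_aa=\widehat a\in\Sh(\B(E))=\C(E)$, and since $w\in\Sh(\B(E))$ with $a\leq w$ we also get $n_aa\leq w$. It then remains to see that $n_aa$ is an atom of $\C(E)$. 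Given $0\neq s\in\C(E)$ with $s\leq n_aa$, pick an atom $b$ of $\B(E)$ with $b\leq s$; if $b\neq a$ then $a\wedge b=0$, and since $a\comp b$ in the $MV$-effect algebra $\B(E)$ this forces $a\leq b'$, whence Statement~\ref{lem:2.1}(iii) (applied in $\B(E)$, with multiplicities $n_a$ and $1$) gives $n_aa\wedge b=0$, contradicting $0\neq b\leq n_aa$. So $b=a\leq s$, and therefore $n_aa=\widehat a\leq s$, i.e. $s=n_aa$. Thus every nonzero element of $\C(E)$ lies above an atom of $\C(E)$, so $\C(E)$ is atomic.

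For $(ii)\Rightarrow(i)$ I would use basic decompositions. Assume $\C(E)$ is atomic and let $0\neq x\in\B(E)$. By Statement~\ref{sharpbe}(ii) write $x=w_x\oplus\bigl(\bigoplus\{k_\alpha a_\alpha\mid\alpha\in\Lambda\}\bigr)$ with $w_x\in\C(E)$, each $a_\alpha$ an atom of $E$ lying in $\B(E)$, and $k_\alpha\neq\ord(a_\alpha)$. If $\Lambda\neq\emptyset$, fix $\alpha_0$: then $a_{\alpha_0}\leq k_{\alpha_0}a_{\alpha_0}\leq x$, and $a_{\alpha_0}$, being an atom of $E$ contained in $\B(E)$, is an atom of $\B(E)$ below $x$, and we are done. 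If $\Lambda=\emptyset$ then $x=w_x\in\C(E)$, so by atomicity of $\C(E)$ there is an atom $p$ of $\C(E)$ with $p\leq x$. If $p$ is an atom of $\B(E)$ we are done; otherwise choose $y\in\B(E)$ with $0<y<p$. The index set in the basic decomposition of $y$ cannot be empty, for otherwise $y\in\C(E)$ and $0<y<p$ would contradict $p$ being an atom of $\C(E)$; hence that decomposition contains a term $k_\beta a_\beta$ with $a_\beta$ an atom of $E$ in $\B(E)$, and then $a_\beta\leq y<p\leq x$ exhibits an atom of $\B(E)$ below $x$. In every case $x$ dominates an atom of $\B(E)$, so $\B(E)$ is atomic.

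The step I expect to be the crux is the claim in $(i)\Rightarrow(ii)$ that $n_aa$ is an atom of $\C(E)$: one must be careful that $a$ is only known to be an atom of $\B(E)$, not necessarily of $E$, so Statements~\ref{lem:2.1}(v) and~\ref{lem:2.1}(iii) have to be applied to the lattice effect algebra $\B(E)$ rather than to $E$, and the argument genuinely uses that $\B(E)$ is an $MV$-effect algebra (all pairs compatible). The only other point that needs a word is that an atom of $E$ which happens to lie in $\B(E)$ is automatically an atom of $\B(E)$, since $\B(E)$ is a subposet of $E$; this is what makes the atoms produced by the basic decomposition immediately usable in $(ii)\Rightarrow(i)$.
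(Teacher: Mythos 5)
Your proof is correct and follows essentially the same route as the paper: for (i)$\Rightarrow$(ii) you produce the atom $n_aa$ of $\C(E)$ from an atom $a$ of $\B(E)$ via Statement~\ref{lem:2.1}(v) and (iii), and for (ii)$\Rightarrow$(i) you combine the basic decomposition of Statement~\ref{sharpbe}(ii) with atomicity of $\C(E)$, exactly as the paper does. The only (welcome) cosmetic difference is that in (i)$\Rightarrow$(ii) you stay entirely inside the $MV$-effect algebra $\B(E)$ and test minimality of $n_aa$ directly against elements of $\C(E)$ using $\C(E)=\Sh(\B(E))$, whereas the paper computes the whole interval $[0,n_aa]$ in $E$ and thereby implicitly treats the atom $a$ of $\B(E)$ as an atom of $E$.
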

\begin{proof}(i) $\implik$ (ii): Let $c\in \C(E)\subseteq \B(E)$, $c\not=0$. Then there 
is an atom $a\in \B(E)$ such that $a\leq c$. Therefore by 
Statement \ref{lem:2.1}, (v) $n_a a\leq c$ and $n_{a} a\in \C(E)=\B(E)\cap \Sh(E)$ 
since $\B(E)$ is a sub-lattice effect algebra of $E$. It follows that 
$[0, n_a a]=\{0, a, \dots, n_a a\}\subseteq B(E)$, as 
for every atom $b$ of $E$, $b\not =a$ we have $b \comp a$, 
which gives that $b\wedge n_a a=0$, by Statement \ref{lem:2.1}, (iii) and this 
yields by Statement \ref{lem:2.0}, (i) that 
any element below $n_a a$ is of the form $ka$, $0\leq k\leq n_a a$. 
Hence 
$\{0, a, 2a, \dots, n_a a\}\cap \C(E)=\{0, n_a a\}$. 
This yields that $n_a a$ is an atom of 
$\C(E)$ below $c$.

\noindent{}(ii) $\implik$ (i): Let $x\in \B(E)$, $x\not=0$.  If 
$x\not\in \Sh(E)$ then by  Statement \ref{sharpbe}, (ii) 
there is an atom $a\in \B(E)$ such that 
$a\leq x\ominus \widetilde{x}\leq x$. So let us assume that $x\in \Sh(E)\cap \B(E)=\C(E)$. 
Then there is by (ii) an atom $c$ from  $\C(E)$, $c\leq x$. Assume that there is an element 
$y\in \B(E)$ such that $y< c$. Then we have the following possibilities: 
{\renewcommand{\labelenumii}{{\normalfont  (\arabic{enumii})}}
\begin{enumerate}
\settowidth{\leftmargin}{(iiiii)}
\settowidth{\labelwidth}{(iii)}
\settowidth{\itemindent}{(ii)}
\item $y\not\in \Sh(E)$ and by the above argument there is an atom $a\in \B(E)$ such that 
$a\leq y < c \leq x$. Otherwise we have
\item $y\in \Sh(E)\cap \B(E)=\C(E)$ which implies that $y=0$.
\end{enumerate}}

Hence we obtain that $\B(E)$ is atomic. 
\end{proof}

\begin{corollary} \label{finc}Let $E$ be a sharply 
dominating atomic Archimedean  lattice effect algebra 
with a finite center $\C(E)$. Then 
$\B(E)$ is atomic and bifull in $E$. 
\end{corollary}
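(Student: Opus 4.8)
The plan is to obtain the corollary as a direct consequence of Theorems~\ref{ccatomic} and~\ref{ccnied}, once we observe that a finite center is automatically atomic and bifull in $E$.

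First I would dispose of atomicity. Since $\C(E)$ is a Boolean algebra (as recalled in the Preliminaries) and it is finite by hypothesis, it is atomic. Both hypotheses ``sharply dominating'' and ``atomic Archimedean'' are available, so Theorem~\ref{ccatomic}, the implication (ii)~$\implik$~(i), applies and yields that $\B(E)$ is atomic.

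Next I would verify that $\C(E)$ is bifull in $E$. By Statement~\ref{lem:2.1}, (ii), $\C(E)$ is a full sub-lattice effect algebra of $E$; in particular it is a sublattice of $E$ and is closed under all suprema and infima of its subsets that exist in $E$, which gives one half of Definition~\ref{bifdf}. For the other half, let $X\subseteq \C(E)$. Because $\C(E)$ is finite, $X$ is finite, so $\bigvee_{\C(E)} X$ certainly exists; being a \emph{finite} join taken inside a sublattice of $E$, it coincides with $\bigvee_{E} X$, so $\bigvee_E X$ exists and equals $\bigvee_{\C(E)} X$. The dual argument handles meets. Hence $\C(E)$ is bifull in $E$, and Theorem~\ref{ccnied}, the implication (ii)~$\implik$~(i), gives that $\B(E)$ is bifull in $E$.

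I do not anticipate any genuine obstacle here: the corollary is a straightforward packaging of the two equivalence theorems. The only step requiring a moment's care is the observation that finite joins and meets computed in $\C(E)$ agree with those computed in $E$, which is immediate from $\C(E)$ being a sub-lattice effect algebra of $E$.
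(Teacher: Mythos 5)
Your proof is correct and is exactly the intended derivation: the paper states this corollary without proof as an immediate consequence of Theorems~\ref{ccatomic} and~\ref{ccnied}, using precisely the observations that a finite Boolean algebra is atomic and that a finite full sub-lattice effect algebra is bifull. Your careful check that finite joins and meets in $\C(E)$ agree with those in $E$ is the right (and only) point needing verification.
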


\section{Triple Representation Theorem for 
sharply dominating atomic\\ Archimedean lattice effect algebras}

In what follows $E$ will be always a sharply dominating atomic Archimedean 
lattice effect algebra. Then $\Sh(E)$ is a sub-lattice effect algebra 
of $E$ and $\Mea(E)$ equipped with a partial 
operation $\oplus_{\Mea(E)}$ which is defined, for all $x, y\in \Mea(E)$, 
by $x\oplus_{\Mea(E)} y$ exists
if and only if $x\oplus_E y$  exists and $x\oplus_E y\in \Mea(E)$ in which 
case $x\oplus_{\Mea(E)} y=x\oplus_E y$ is a generalized  effect algebra. 
Recall only that, for any meager atom $a\in E$, we have that 
$\ord_{\Mea(E)}(a)=\ord_{E}(a)-1$. We are therefore able to reconstruct 
the isotropic index in $E$ of any atom from $\Mea(E)$.
Moreover, we have a map 
$h:\Sh(E)\to 2^{\Mea(E)}$ that is given by 
$h(s)=\{x\in \Mea(E) \mid x\leq s\}$. As in \cite{jenca} for complete 
lattice effect algebras we will 
prove the following theorem. 
\medskip

\noindent{\bfseries Triple Representation Theorem}
 {\em The triple $(\Sh(E), \Mea(E), h)$ characterizes $E$ up to isomorphism.}
\medskip

We have to  construct an isomorphic copy of the original effect algebra
$E$ from the triple $(\Sh(E), \Mea(E), h)$. To do this we will first
construct  the
following mappings in terms of the triple.

\begin{itemize}
\item[(M1)] The mapping\quad $\widehat{\phantom{x}}:\Mea(E) \to \Sh(E)$.
\item[(M2)] For every $s\in \Sh(E)$, a mapping 
$\pi_{s}:\Mea(E) \to h(s)$, which is given by 
$\pi_{s}(x)=x\wedge_{E} s$.
\item[(M3)] The mapping\, $R:\Mea(E) \to \Mea(E)$ 
given by $R(x)=\widehat{{x}}\ominus_E x$.
\end{itemize}
\begin{itemize}
\item[(M4)] The partial mapping\, \mbox{${S}:\Mea(E) \times \Mea(E)\to \Sh(E)$} 
given by $S(x, y)$ is defined if and only if 
the set ${{\mathscr S}}(x, y)=\{z\in \Sh(E)\mid %
z=(z\wedge x)\oplus_E (z\wedge y)\}$ has a top element 
$z_0\in {\mathscr S}(x, y)$ 
in which case $S(x, y)=z_0$.
\end{itemize}

Since $E$ is sharply dominating and $\Sh(E)$ is bifull in $E$ 
we have that, for all $x\in \Mea(E)$, 
$$\widehat{x}=\bigwedge_{E}\{s\in \Sh(E) \mid x\in h(s)\}=%
\bigwedge_{\Sh(E)}\{s\in \Sh(E) \mid x\in h(s)\}.$$ 

Similarly, for all $s\in \Sh(E)$ and for all $x\in \Mea(E)$, 
$x\wedge_{E} s\in \Mea(E)$. Hence 
$$
\begin{array}{@{}r@{\,}c@{\,}l}
\pi_{s}(x)&=&x\wedge_{E} s=%
\bigvee_{E}\{y\in E \mid y\leq x, y\leq s\}\\
\phantom{\text{\huge I}}&=&\bigvee_{E}\{y\in \Mea(E) \mid y\leq x, y\in h(s)\}
=\bigvee_{M(E)}\{y\in \Mea(E) \mid y\leq x, y\in h(s)\}.
\end{array}
$$

Now, let us construct the mapping  $R$. Let $x\in  \Mea(E)$. If $x=0$ we put 
$R(x)=0$. Let $x\not =0$.
As before let us denote  by 
$A_x=\{a \mid \ a\ \text{an atom of}\ E,$ $ a\leq x\}%
=\{a \mid \ a\ \text{an atom of}\ \Mea(E),$ $ a\leq x\}\not =\emptyset$ 
and, for any $a\in A_x$, 
we shall put $k^{x}_a=\text{max}\{ k\in {\mathbb N}\mid ka \leq x\}$ 
and $n_a=\ord_{\Mea(E)}(a)+1$. Hence 
$1\leq k^{x}_a\leq \ord_{\Mea(E)}(a)$. Therefore 
$\{k^{x}_a a \mid \ a\in A_x\}\cup %
\{(n_a-k^{x}_a) a \mid\ a\in A_x\}\subseteq \Mea(E)$.
We know from Theorem \ref{popismeager}, (ii) and (v) that 
$x=\bigvee_{E}\{k^{x}_a a \mid \ a\in A_x\}=%
\bigvee_{\Mea(E)}\{k^{x}_a a \mid \ a\in A_x\}$, 
$\widehat{x}\ominus x=\bigvee_{E}\{(n_a-k^{x}_a) a \mid\ a\in A_x\}%
=\bigvee_{\Mea(E)}\{(n_a-k^{x}_a) a \mid\ a\in A_x\}\not = 0$, 
$\widehat{x}\ominus x\in  \Mea(E)$.

What remains is the partial mapping $S$. Let $x, y\in \Mea(E)$. 
By Lemma \ref{shaco}, (ii) ${\mathscr S}(x, y)=%
\{z\in \Sh(E) \mid %
z=(z\wedge x)\oplus_E (z\wedge y)\}=\{z\in \Sh(E)\mid %
z=\widehat{\pi_z(x)}\ \text{and}\ R(\pi_z(x))=\pi_z(y)\}$.
Hence whether $S(x, y)$ is defined or not we are able to decide 
in terms of the triple. Since the eventual top element $z_0$ of 
${\mathscr S}(x, y)$ is in $\Sh(E)$ our definition of ${S}(x, y)$ 
is correct.

\begin{lemma}\label{pommeag} Let $E$ be a sharply dominating atomic Archimedean 
lattice effect algebra, $x, y\in \Mea(E)$. Then $x\oplus_{E} y$ exists 
in $E$
iff $S(x, y)$ is defined in terms of the triple $(\Sh(E), \Mea(E), h)$ and 
$(x\ominus_{\Mea(E)} (S(x, y)\wedge x))%
\oplus_{\Mea(E)} (y\ominus_{\Mea(E)} (S(x, y)\wedge y))$ 
exists in $\Mea(E)$ such that %
$(x\ominus_{\Mea(E)} (S(x, y)\wedge x))%
\oplus_{\Mea(E)} (y\ominus_{\Mea(E)} (S(x, y)\wedge y))%
\in h(S(x, y)')$. Moreover, in 
that case
$$x\oplus_{E} y=\underbrace{S(x, y)}_{\in \Sh(E)}\oplus_{E} %
(\underbrace{(x\ominus_{\Mea(E)} (S(x, y)\wedge x))%
\oplus_{\Mea(E)} (y\ominus_{\Mea(E)} (S(x, y)\wedge y))}_{\in \Mea(E)}).$$
\end{lemma}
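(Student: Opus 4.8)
The plan is to use the decomposition machinery of Corollary \ref{gejza} together with the characterization of $\mathscr{S}(x,y)$ as the set of sharp elements on which the two meager pieces split compatibly. First I would handle the forward direction: suppose $x\oplus_E y$ exists in $E$. Since $E$ is sharply dominating, $z:=x\oplus_E y$ has a basic decomposition $z=\widetilde{z}\oplus_E (z\ominus_E\widetilde{z})$ with $\widetilde{z}\in\Sh(E)$ and $z\ominus_E\widetilde{z}\in\Mea(E)$ by Corollary \ref{gejza}. I claim $\widetilde{z}\in\mathscr{S}(x,y)$ and in fact $\widetilde{z}$ is its top element, so $S(x,y)=\widetilde{z}$. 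To see $\widetilde{z}\in\mathscr{S}(x,y)$, apply Statement \ref{lem:2.1}(viii)? — no, $\widetilde{z}$ need not be central; instead use that $\widetilde z\in\Sh(E)$ and compatibility: since $x,y\le z$ and $\widetilde z\le z$ with $\widetilde z$ sharp, one checks $\widetilde z = (\widetilde z\wedge x)\oplus_E(\widetilde z\wedge y)$ by a direct computation inside $[0,z]$, using $x\oplus_E y = z$, $\widetilde z\leftrightarrow x$, $\widetilde z\leftrightarrow y$ (all elements below a common $z$ with one sharp are compatible), together with Statement \ref{lem:2.1}(i)(b) and Statement \ref{lem:2.1}(vi). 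For maximality: if $w\in\mathscr S(x,y)$, i.e. $w=(w\wedge x)\oplus_E(w\wedge y)$ with $w$ sharp, then $w\le x\oplus_E y=z$ (since $w\wedge x\le x$, $w\wedge y\le y$, and these are orthogonal as $w$ is sharp), hence $w\le\widehat{z\ominus\widetilde z}$? — rather $w$ is a sharp element below $z$, so $w\le\widetilde z$ by definition of $\widetilde z$ as the greatest sharp element below $z$. Thus $S(x,y)=\widetilde z$ is defined.

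Next, writing $s:=S(x,y)=\widetilde z$, I compute the two required pieces. We have $s\wedge x\le x$ and $s\wedge y\le y$, both meager, and since $s=(s\wedge x)\oplus_E(s\wedge y)$ with $x\ominus_{\Mea(E)}(s\wedge x)$ and $y\ominus_{\Mea(E)}(s\wedge y)$ meager (submeager property), I must check that $t:=(x\ominus_{\Mea(E)}(s\wedge x))\oplus_{\Mea(E)}(y\ominus_{\Mea(E)}(s\wedge y))$ exists in $\Mea(E)$ and lies in $h(s')$. Here the key identity is that inside $[0,z]$, since $s\oplus_E t' = z$ for the appropriate orthogonal complement, one gets $t = z\ominus_E s = z\ominus_E\widetilde z \in\Mea(E)$ using Corollary \ref{gejza} and cancellation, and $t\le z\ominus_E s\le s'$ so $t\in h(s')$. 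Concretely: $x=(s\wedge x)\oplus_E(x\ominus_E(s\wedge x))$ and $y=(s\wedge y)\oplus_E(y\ominus_E(s\wedge y))$, so $z=x\oplus_E y = \big((s\wedge x)\oplus_E(s\wedge y)\big)\oplus_E\big((x\ominus_E(s\wedge x))\oplus_E(y\ominus_E(s\wedge y))\big) = s\oplus_E t$, which forces $t\le s'$ and $t=z\ominus_E s$; since $z\ominus_E\widetilde z\in\Mea(E)$ and $s=\widetilde z$, we get $t\in\Mea(E)\cap h(s')$. This also yields the displayed formula $x\oplus_E y = S(x,y)\oplus_E t$ immediately, since it is just $z=s\oplus_E t$ rewritten.

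For the converse, suppose $S(x,y)=s$ is defined and $t:=(x\ominus_{\Mea(E)}(s\wedge x))\oplus_{\Mea(E)}(y\ominus_{\Mea(E)}(s\wedge y))$ exists in $\Mea(E)$ with $t\in h(s')$, i.e. $t\le s'$. Then $s\oplus_E t$ exists in $E$ (as $s\in\Sh(E)$, $t\le s'$), and I want $s\oplus_E t = x\oplus_E y$. From $s=(s\wedge x)\oplus_E(s\wedge y)$ (membership in $\mathscr S(x,y)$) and the definitions of the $\ominus_{\Mea(E)}$ terms, expanding $s\oplus_E t$ and regrouping via associativity and commutativity gives $s\oplus_E t = \big((s\wedge x)\oplus_E(x\ominus_E(s\wedge x))\big)\oplus_E\big((s\wedge y)\oplus_E(y\ominus_E(s\wedge y))\big) = x\oplus_E y$; the regrouping is legitimate precisely because all the sub-sums on the right are dominated by $s\oplus_E t$, which exists. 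Hence $x\oplus_E y$ exists and equals $S(x,y)\oplus_E t$.

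The main obstacle I anticipate is the bookkeeping in the regrouping steps — ensuring that each intermediate orthogonal sum is actually defined so that associativity (Eii) can be applied — and, in the forward direction, carefully justifying $\widetilde z=(\widetilde z\wedge x)\oplus_E(\widetilde z\wedge y)$. That last identity is where Lemma \ref{shaco}(ii) and Lemma \ref{jmpy2} are really used: indeed $x\ominus_E(\widetilde z\wedge x)$ and $y\ominus_E(\widetilde z\wedge y)$ are meager and their sum is $z\ominus_E\widetilde z\in\Mea(E)$, and then $\widehat{\,\cdot\,}$ of that meager remainder recovers $\widetilde z$ by Lemma \ref{jpy2}(ii), which pins down that $\widetilde z$ is exactly the sharp part and hence lies in $\mathscr S(x,y)$ as its top. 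Everything else is routine manipulation of $\oplus$, $\ominus$, $\wedge$ using Statement \ref{lem:2.1}.
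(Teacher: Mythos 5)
Your overall strategy coincides with the paper's: identify $S(x,y)$ with the sharp part $\widetilde z$ of $z=x\oplus_E y$, show it is the top of $\mathscr S(x,y)$, and read off $t=z\ominus_E\widetilde z$ as the meager part. But there is a genuine gap at the one step you yourself flag as the crux, namely $\widetilde z=(\widetilde z\wedge x)\oplus_E(\widetilde z\wedge y)$. You correctly observe that Statement \ref{lem:2.1}(viii) does not apply because $\widetilde z$ need not be central in $E$, but the substitute you offer does not work: the principle ``all elements below a common $z$ with one of them sharp are compatible'' is false in general (take $E$ an orthomodular lattice and $z=1$; then every element is sharp, yet non-compatible pairs exist), and the ``direct computation inside $[0,z]$'' is never supplied. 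Even granting $\widetilde z\leftrightarrow x$ and $\widetilde z\leftrightarrow y$, compatibility alone does not yield the distributive identity $s\wedge(x\oplus y)=(s\wedge x)\oplus(s\wedge y)$; that is precisely a central-element property.

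The missing idea is the paper's block argument. Since $x\oplus_E y$ exists, $x\leftrightarrow y$, so by Statement \ref{lemkaol} there is an atomic block $B$ with $x,y,z\in B$. By Statement \ref{shdombe}(iii), $B$ is sharply dominating and the basic decomposition of $z$ in $B$ coincides with that in $E$, so $z_S=\widetilde z\in B$; and since $B$ is an MV-effect algebra, $\Sh(B)=\C(B)$, so $z_S$ \emph{is} central in $B$. Now Statement \ref{lem:2.1}(viii) applies inside $B$ and gives $z_S=z_S\wedge(x\oplus_B y)=(z_S\wedge x)\oplus_B(z_S\wedge y)=(z_S\wedge x)\oplus_E(z_S\wedge y)$. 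With this step repaired, the remainder of your argument (maximality of $\widetilde z$ in $\mathscr S(x,y)$ via ``sharp and below $z$ implies below $\widetilde z$'', the cancellation giving $t=z_M\in\Mea(E)\cap h(S(x,y)')$, and the regrouping in the converse direction) matches the paper's proof and is sound.
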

\begin{proof} Assume first that $x\oplus_{E} y$ exists 
in $E$ and let us put $z=x\oplus_{E} y$. Then 
$z=z_S\oplus_{E}  z_M$ such that $z_S\in \Sh(E)$ and 
$z_M\in \Mea(E)$ is BDE of $z$ in $E$. Since $x\comp y$ by Statement 
\ref{lemkaol} there is  an atomic block $B$ 
of $E$ such that $x, y, z\in B$. We know from 
Statement \ref{shdombe}, (iii) that $B$ is sharply dominating 
and BDE of $z\in B$ in $B$ and BDE of $z$ in $E$ coincide. This yields 
that $z_S,  z_M\in B$. Therefore $z_S\in \C(B)$ and by Statement 
\ref{lem:2.1}, (viii) we have that 
 $z_S= z_S \wedge (x\oplus_{E} y)= z_S \wedge (x\oplus_{B} y)= 
 (z_S \wedge x)\oplus_{B} (z_S \wedge y)= %
(z_S \wedge x)\oplus_{E} (z_S \wedge y)$. Hence $z_S\in {\mathscr S}(x, y)$. 
Now, assume that $u\in {\mathscr S}(x, y)$. Then 
$u=(u \wedge x)\oplus_{E} (u \wedge y)\leq x\oplus_{E} y$. Since 
$u\in \Sh(E)$ we have that $u\leq z_S$, i.e., $z_S$ is the top 
element of ${\mathscr S}(x, y)$. Moreover, we have 
$$
\begin{array}{@{}r@{\,}c@{\,}l}
z_S\oplus_E z_M&=&x\oplus_{E} y\\
&=&((S(x, y)\wedge x)\oplus_{E} %
(x\ominus_{E} (S(x, y)\wedge x)))%
\oplus_E \\
& &((S(x, y)\wedge y)\oplus_{E}
 (y\ominus_{E} (S(x, y)\wedge y)))\\
&=&{S(x, y)}\oplus_{E} %
((x\ominus_{\Mea(E)} (S(x, y)\wedge x))%
\oplus_{E}
(y\ominus_{\Mea(E)} (S(x, y)\wedge y))).
 \end{array}
$$
Because $z_S=S(x, y)$ it follows that $z_M=(x\ominus_{\Mea(E)} (S(x, y)\wedge x))%
\oplus_{E} (y\ominus_{\Mea(E)} (S(x, y)\wedge y))$, i.e., 
$z_M=(x\ominus_{\Mea(E)} (S(x, y)\wedge x))%
\oplus_{\Mea(E)} (y\ominus_{\Mea(E)} (S(x, y)\wedge y))$ and evidently 
$z_M\in h(z_S')$.

Conversely, let us assume that $S(x, y)$ is defined in terms of  $(\Sh(E), \Mea(E), h)$,  
$(x\ominus_{\Mea(E)} (S(x, y)\wedge x))%
\oplus_{\Mea(E)} (y\ominus_{\Mea(E)} (S(x, y)\wedge y))$ 
exists in $\Mea(E)$ and %
$(x\ominus_{\Mea(E)} (S(x, y)\wedge x))%
\oplus_{\Mea(E)} (y\ominus_{\Mea(E)} (S(x, y)\wedge y))%
\in h(S(x, y)')$. Then 
$(x\ominus_{\Mea(E)} (S(x, y)\wedge x))%
\oplus_{\Mea(E)} (y\ominus_{\Mea(E)} (S(x, y)\wedge y))%
\leq S(x, y)'$, i.e., 
$$
\begin{array}{r c l}
&&S(x, y)\oplus_{E}%
((x\ominus_{\Mea(E)} (S(x, y)\wedge x))%
\oplus_{\Mea(E)} (y\ominus_{\Mea(E)} (S(x, y)\wedge y)))\\
&=&((S(x, y)\wedge x)\oplus_{E} (S(x, y)\wedge y)) \oplus_{E}\\%
&&((x\ominus_{E} (S(x, y)\wedge x))%
\oplus_{E} (y\ominus_{E} (S(x, y)\wedge y)))=x\oplus_{E} y\\
\end{array}
$$
\noindent{}is defined.
\end{proof}

{\renewcommand{\labelenumi}{{\normalfont  (\roman{enumi})}}
\begin{theorem}\label{tripletheor}
Let E be a sharply dominating atomic Archimedean lattice effect algebra. 
Let $\Tea(E)$ be a subset
of $\Sh(E) \times \Mea(E)$ given by
$$\Tea(E) =\{(z_S, z_M)\in \Sh(E) \times \Mea(E) \mid z_M \in  h(z_S')\}.$$
Equip $\Tea(E)$ with a partial binary operation $\oplus_{\Tea(E)}$ with 
$(x_S, x_M) \oplus_{\Tea(E)} (y_S, y_M)$ is defined if and  only if 
\begin{enumerate}
\item $S(x_M, y_M)$ is defined,
\item $z_S=x_S\oplus_{\Sh(E)} y_S \oplus_{\Sh(E)} S(x_M, y_M)$ is defined,
\item $z_M=(x_M\ominus_{\Mea(E)} (S(x_M, y_M)\wedge x_M))%
\oplus_{\Mea(E)} (y_M\ominus_{\Mea(E)} (S(x_M, y_M)\wedge y_M))$ is defined,
\item $z_M\in h(z_S')$. 
\end{enumerate}
In this case $(z_S, z_M)=(x_S, x_M) \oplus_{\Tea(E)} (y_S, y_M)$.
Let\/ $0_{\Tea(E)}=(0_E,0_E)$ and\/ $1_{\Tea(E)}=(1_E,0_E)$. Then 
$\Tea(E)=(\Tea(E), \oplus_{\Tea(E)}, 0_{\Tea(E)},$ $1_{\Tea(E)})$ 
is an effect algebra and the mapping 
$\varphi:E\to \Tea(E)$  given by 
$\varphi(x) = (\widetilde{x}, x\ominus_{E} \widetilde{x})$   
is an isomorphism of effect algebras.
\end{theorem}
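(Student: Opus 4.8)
The plan is to verify first that $\Tea(E)$ is an effect algebra, and then that $\varphi$ is an isomorphism, using Lemma~\ref{pommeag} as the bridge between the operation on $E$ and the operation defined on the triple. For the effect algebra axioms, the key observation is that $\Tea(E)$ has been \emph{designed} so that $(x_S,x_M)\oplus_{\Tea(E)}(y_S,y_M)$ mimics, component by component, the basic decomposition of $x\oplus_E y$ when $x=x_S\oplus_E x_M$, $y=y_S\oplus_E y_M$ and both summands lie in the appropriate $h$-fibres. So I would not check the axioms directly; instead I would first establish that $\varphi$ is a bijection between $E$ and $\Tea(E)$ (which is essentially Corollary~\ref{gejza}: $x\mapsto(\widetilde{x},x\ominus_E\widetilde{x})$ with inverse $(z_S,z_M)\mapsto z_S\oplus_E z_M$, the latter being well-defined precisely because $z_M\in h(z_S')$ means $z_M\le z_S'$), and then transport the effect algebra structure of $E$ across $\varphi$, checking that the transported operation coincides with $\oplus_{\Tea(E)}$.

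The heart of the argument is therefore the following claim: for $x,y\in E$ with $\varphi(x)=(x_S,x_M)$, $\varphi(y)=(y_S,y_M)$, the sum $x\oplus_E y$ exists in $E$ if and only if conditions (1)--(4) hold, and in that case $\varphi(x\oplus_E y)=(z_S,z_M)$ with $z_S,z_M$ as in the statement. To prove this I would use Statement~\ref{lemkaol} to place $x,y$ (hence $x_S,x_M,y_S,y_M$ and $x\oplus_E y$) in a common atomic block $B$, which is sharply dominating with BDE coinciding with that of $E$ (Statement~\ref{shdombe}, (iii)). Inside $B$ everything is compatible, so I can compute: writing $z=x\oplus_E y=z_S^{\,0}\oplus_E z_M^{\,0}$ for its actual basic decomposition, I want to identify $z_S^{\,0}=x_S\oplus_{\Sh(E)}y_S\oplus_{\Sh(E)}S(x_M,y_M)$ and $z_M^{\,0}$ with the expression in (3). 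The sharp part of $z$ comes from three sources: the sharp parts $x_S,y_S$ (which are central in $B$, so by Statement~\ref{lem:2.1}(viii) they split off cleanly), plus whatever sharp element is ``hidden'' inside $x_M\oplus_E y_M$ — and by Lemma~\ref{shaco}(ii) and the definition of $S$, that hidden sharp element is exactly $S(x_M,y_M)$, with meager remainder $(x_M\ominus_{\Mea(E)}(S(x_M,y_M)\wedge x_M))\oplus_{\Mea(E)}(y_M\ominus_{\Mea(E)}(S(x_M,y_M)\wedge y_M))$ as in Lemma~\ref{pommeag}. Chasing the associativity and cancellation laws in $B$ (using Statement~\ref{lem:2.1}(vi), (viii)) then gives the decomposition of $z$, and uniqueness of BDE (Corollary~\ref{gejza}) pins down $z_S^{\,0},z_M^{\,0}$.

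Once this claim is in hand, the effect algebra axioms for $\Tea(E)$ follow formally: $\varphi$ is a bijection intertwining $\oplus_E$ with $\oplus_{\Tea(E)}$, so (Ei)--(Eiv) for $\Tea(E)$ are inherited from those for $E$ — in particular $0_{\Tea(E)}=\varphi(0)$, $1_{\Tea(E)}=\varphi(1)$, and the orthosupplement of $(z_S,z_M)$ is $\varphi((z_S\oplus_E z_M)')$. I would include a short remark that conditions (1)--(4) are genuinely expressible in terms of the triple $(\Sh(E),\Mea(E),h)$ alone: $S$ was shown to be triple-definable in the paragraph preceding Lemma~\ref{pommeag}, $\oplus_{\Sh(E)}$ and $\oplus_{\Mea(E)}$ are the inherited partial operations on the two given sets, $\wedge$ inside $\Mea(E)$ and inside $\Sh(E)$ is intrinsic, the meet $S(x_M,y_M)\wedge x_M$ equals $\pi_{S(x_M,y_M)}(x_M)$ which is triple-definable via (M2), and membership $z_M\in h(z_S')$ is a relation on the given data (with $z_S'$ computed in $\Sh(E)$). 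Hence $\Tea(E)$ is literally reconstructed from the triple, and $E\cong\Tea(E)$, which is the Triple Representation Theorem.

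The main obstacle I anticipate is the block computation identifying the sharp part of $x\oplus_E y$: one must be careful that passing to the common block $B$ does not change any of $\widetilde{x},\widetilde{y},\widehat{\pi_z(x_M)}$, etc., and that the several applications of associativity/cancellation are legitimate, i.e.\ that each intermediate orthogonal sum actually exists. The bifullness of $B$ in $E$ (\cite[Lemma~2.7]{PR6x}) and the fact that BDE in $B$ agrees with BDE in $E$ (Statement~\ref{shdombe}, (iii)) are exactly what make this safe, but threading them through carefully is where the real work lies; the rest is bookkeeping with Lemma~\ref{pommeag}.
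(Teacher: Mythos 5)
Your proposal is correct and follows essentially the same route as the paper: establish that $\varphi$ is a bijection via the uniqueness of the decomposition $x=\widetilde{x}\oplus_E(x\ominus_E\widetilde{x})$, use Lemma \ref{pommeag} (together with the common-block argument and uniqueness of BDE) to show that $x\oplus_E y$ exists iff conditions (1)--(4) hold with $\varphi(x\oplus_E y)=(z_S,z_M)$, and then transport the effect algebra structure along $\varphi$. The extra care you flag about the block computation is precisely the content already isolated in Lemma \ref{pommeag}, so no new gap arises.
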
}
\begin{proof} Evidently, $\varphi$ is correctly defined since, 
for any $x\in E$, we have that $x=\widetilde{x}\oplus_{E} (x\ominus \widetilde{x})=%
x_S\oplus_{E} x_M$, $x_S\in \Sh(E)$ and $x_M\in \Mea(E)$. Hence 
$\varphi(x) = (x_S, x_M)\in \Sh(E) \times \Mea(E)$ and $x_M\in h(x_S')$. Let us check that $\varphi$ is 
bijective. Assume first that $x, y\in E$ such that $\varphi(x) =\varphi(y)$. 
We have $x=\widetilde{x}\oplus_{E} (x\ominus_{E} \widetilde{x})=%
\widetilde{y}\oplus_{E} (y\ominus_{E} \widetilde{y})=y$. Hence 
$\varphi$ is injective. Let $(x_S, x_M)\in \Sh(E) \times \Mea(E)$ and 
$x_M\in h(x_S')$. This yields that $x=x_S\oplus_{E} x_M$ exists and 
evidently by Lemma \ref{jpy2}, (i) $\widetilde{x}=x_S$ and $x\ominus_{E} \widetilde{x}=x_M$.
It follows that $\varphi$ is surjective. Moreover, 
$\varphi(0_{E})=(0_E,0_E)=0_{\Tea(E)}$ and\/ $\varphi(1_{E})=(1_E,0_E)=1_{\Tea(E)}$.

Now, let us check that, for all $x, y\in E$, $x\oplus_E y$ 
is defined iff $\varphi(x)\oplus_{\Tea(E)} \varphi(y)$ is defined 
in which case $\varphi(x\oplus_E y)=\varphi(x)\oplus_{\Tea(E)} \varphi(y)$. 
For any $x, y, z, u\in E$ we obtain 
\begin{center}
\begin{tabular}{@{}c r c l}
\multicolumn{4}{l}{$z=x\oplus_E y$  $\iff$
$z=(\widetilde{x}\oplus_E (x\ominus_{E} \widetilde{x}))%
\oplus_E (\widetilde{y}\oplus_E (y\ominus_{E} \widetilde{y}))$} \\
$\iff$&\multicolumn{3}{l}{$z=(\widetilde{x}\oplus_E \widetilde{y})%
\oplus_E ((x\ominus_{E} \widetilde{x})\oplus_E (y\ominus_{E} \widetilde{y}))$ \ $\iff$ \ %
by Lemma \ref{pommeag}}\\
\multicolumn{4}{l}{$u=S(x\ominus_{E} \widetilde{x}, y\ominus_{E} \widetilde{y})$  and}\\ %
\multicolumn{4}{l}{$\begin{array}{@{}r@{}c@{}l}
z=(\widetilde{x}\oplus_E \widetilde{y})%
\oplus_E (u&\oplus_{E}&((x\ominus_{E} \widetilde{x})\ominus_{E} (u\wedge (x\ominus_{E} \widetilde{x})))\\
&\oplus_{E}&((y\ominus_{E} \widetilde{y})\ominus_{E} (u\wedge (y\ominus_{E} \widetilde{y}))))\ 
\end{array}$ }\\
$\iff$&\multicolumn{3}{l}{$u=S(x\ominus_{E} \widetilde{x}, y\ominus_{E} \widetilde{y})$  and}\\ %
\multicolumn{4}{l}{$\begin{array}{r@{}c@{}l}
z=(\widetilde{x}\oplus_E \widetilde{y}%
\oplus_E u)&\oplus_{E}&(((x\ominus_{E} \widetilde{x})\ominus_{E} (u\wedge (x\ominus_{E} \widetilde{x})))\\
&\oplus_{E}&\phantom{(}((y\ominus_{E} \widetilde{y})\ominus_{E} (u\wedge (y\ominus_{E} \widetilde{y}))))\ 
\end{array}$ }\\
$\iff$&\multicolumn{3}{l}{$u=S(x\ominus_{E} \widetilde{x}, y\ominus_{E} \widetilde{y})$  and}\\ %
\multicolumn{4}{l}{$\begin{array}{r@{}c@{}l}
z=(\widetilde{x}\oplus_{\Sh(E)} \widetilde{y}%
&\oplus_{\Sh(E)}& u)\oplus_{E}(((x\ominus_{E} \widetilde{x})\ominus_{\Mea(E)} %
(u\wedge (x\ominus_{E} \widetilde{x})))\\
&\oplus_{\Mea(E)}&\phantom{(}((y\ominus_{E} \widetilde{y})\ominus_{\Mea(E)} %
(u\wedge (y\ominus_{E} \widetilde{y}))))\ 
\end{array}$ }\\
$\iff$&\multicolumn{3}{l}{$(\widetilde{x}, x\ominus_{E} \widetilde{x}) %
\oplus_{\Tea(E)} (\widetilde{y}, y\ominus_{E} \widetilde{y})$ is defined and}\\
\multicolumn{4}{l}{%
\begin{tabular}{@{}r@{}c@{}l}$\varphi(z)$&=&$(\widetilde{x}\oplus_{\Sh(E)}\widetilde{y}\oplus_{\Sh(E)}%
S(x\ominus_{E} \widetilde{x}, y\ominus_{E} \widetilde{y}), %
((x\ominus_{E} \widetilde{x}) \ominus (S(x\ominus_{E} \widetilde{x}, y\ominus_{E} \widetilde{y})%
\wedge $\\%
&&$(x\ominus_{E} \widetilde{x})))
\oplus_{\Mea(E)} ((y\ominus_{E} \widetilde{y}) \ominus %
(S(x\ominus_{E} \widetilde{x}, y\ominus_{E} \widetilde{y})%
\wedge (y\ominus_{E} \widetilde{y}))))$\\
&=&%
$(\widetilde{x}, x\ominus_{E} \widetilde{x}) %
\oplus_{\Tea(E)} (\widetilde{y}, y\ominus_{E} \widetilde{y})=\varphi(x)\oplus_{\Tea(E)} \varphi(y)$.
\end{tabular}}
\end{tabular}
\end{center}
Altogether, $\Tea(E)=(\Tea(E), \oplus_{\Tea(E)}, 0_{\Tea(E)},$ $1_{\Tea(E)})$ 
is an effect algebra and the mapping $\varphi:E\to \Tea(E)$     
is an isomorphism of effect algebras.
\end{proof}

The Triple Representation Theorem then follows immediately.

\begin{remark}\label{larem}{\rm Recall that our method may be also used in the case of 
complete lattice effect algebras as a substitute of the method from 
\cite{jenca} since we need only 
Lemma \ref{shaco}, (ii) and Lemma \ref{pommeag} to show that Theorem \ref{tripletheor} 
hols for complete lattice effect algebras. But to show that Lemma \ref{shaco}, (ii) 
and Lemma \ref{pommeag} 
hold for complete lattice effect algebras is an easy task.}
\end{remark}

Now, using Theorems \ref{ccatomic} and \ref{tripletheor} we can prove
the following Triple Representation Theorem for $\B(E)$
of sharply dominating atomic Archimedean lattice effect algebras $E$
with atomic center $\C(E)$.

{\renewcommand{\labelenumi}{{\normalfont  (\roman{enumi})}}
\begin{theorem}\label{vftripletheor}
Let E be a sharply dominating atomic Archimedean lattice effect algebra 
with atomic center $\C(E)$.
Let $\Tea(\B(E))$ be a subset
of $\C(E) \times (\Mea(E)\cap \B(E))$ given by
$$\Tea(\B(E)) =\{(z_C, z_{MB})\in \C(E) \times (\Mea(E)\cap \B(E)) 
\mid z_{MB} \in  h(z_C')\cap \B(E)\}.$$
Let us put 
$\oplus_{\Tea(\B(E))}:={\oplus_{\Tea(E)}}_{/\Tea(\B(E))\times \Tea(\B(E))}$ 
and let\/ $0_{\Tea(\B(E))}=(0_E,0_E)$ and\/ $1_{\Tea(\B(E))}=(1_E,0_E)$. Then 
$\Tea(\B(E))=(\Tea(\B(E)), \oplus_{\Tea(\B(E))}, 0_{\Tea(\B(E))},$ $1_{\Tea(\B(E))})$ 
is an effect algebra and the mapping 
$\varphi_{\B(E)}:\B(E)\to \Tea(\B(E))$  given by 
$\varphi_{\B(E)}=\varphi_{/\B(E)}$   
is an isomorphism of effect algebras.
\end{theorem}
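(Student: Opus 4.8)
The plan is to reduce the theorem to Theorem~\ref{tripletheor} applied to $\B(E)$ itself. Note first that $\B(E)$ is again a sharply dominating atomic Archimedean lattice effect algebra: it is a sub-lattice effect algebra of $E$ (hence a lattice, and Archimedean), it is sharply dominating by Statement~\ref{sharpbe}(ii), it is atomic by Theorem~\ref{ccatomic} (this is exactly where the hypothesis that $\C(E)$ is atomic enters), and $\Sh(\B(E))=\B(E)\cap\Sh(E)=\C(E)$. Theorem~\ref{tripletheor} therefore equips the set $T'=\{(z_S,z_M)\in\Sh(\B(E))\times\Mea(\B(E))\mid z_M\in h_{\B(E)}(z_S')\}$, where $h_{\B(E)}(s)=\{x\in\Mea(\B(E))\mid x\le s\}$, with an effect-algebra structure whose partial operation is built from $S$, $\oplus$, $\ominus$ and $h$ computed \emph{inside} $\B(E)$, and yields an isomorphism $\B(E)\to T'$, $x\mapsto(\widetilde{x}^{\B(E)},x\ominus_{\B(E)}\widetilde{x}^{\B(E)})$, where $\widetilde{x}^{\B(E)}$ is the greatest element of $\Sh(\B(E))$ below $x$ (it exists since $\B(E)$ is sharply dominating). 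It then remains to check that $T'$ is exactly the set $\Tea(\B(E))$ of the statement, that this operation is exactly $\oplus_{\Tea(E)}$ restricted to $\Tea(\B(E))$, and that the isomorphism is $\varphi_{/\B(E)}$.

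The first step is the identity $\Mea(\B(E))=\Mea(E)\cap\B(E)$ together with $\widetilde{x}^{\B(E)}=\widetilde{x}$ for all $x\in\B(E)$, where $\widetilde{x}$ is computed in $E$. For $x\in\B(E)$ the basic decomposition of $x$ has the form $x=w_x\oplus(\bigoplus\{k_\alpha a_\alpha\})$ with $w_x\in\C(E)$ by Statement~\ref{sharpbe}(ii), while by Theorem~\ref{popissharp} the sharp part of the basic decomposition of $x$ in $E$ is $\widetilde{x}$; uniqueness of the basic decomposition (Statement~\ref{sharpbe}(i)) gives $\widetilde{x}=w_x\in\C(E)=\Sh(\B(E))$. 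Hence $\widetilde{x}\le\widetilde{x}^{\B(E)}$, and since $\widetilde{x}^{\B(E)}\in\Sh(E)$ lies below $x$ we also get $\widetilde{x}^{\B(E)}\le\widetilde{x}$, so $\widetilde{x}^{\B(E)}=\widetilde{x}$ and (complements and differences being inherited from $E$) $x\ominus_{\B(E)}\widetilde{x}^{\B(E)}=x\ominus_E\widetilde{x}$. In particular, if $x\in\Mea(\B(E))$ then $\widetilde{x}=\widetilde{x}^{\B(E)}=0$, so $x\in\Mea(E)$; the converse inclusion is trivial since $\Sh(\B(E))\subseteq\Sh(E)$. Consequently $h_{\B(E)}(z_S')=h(z_S')\cap\B(E)$ for all $z_S\in\C(E)$, so $T'=\Tea(\B(E))$ as sets, and $\varphi_{\B(E)}\colon x\mapsto(\widetilde{x}^{\B(E)},x\ominus_{\B(E)}\widetilde{x}^{\B(E)})=(\widetilde{x},x\ominus_E\widetilde{x})$ coincides with $\varphi_{/\B(E)}$ and, by Theorem~\ref{tripletheor} applied to $\B(E)$, is a bijection onto $\Tea(\B(E))$.

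Finally, since $\C(E)$, $\Sh(E)$ and $\B(E)$ are full sub-lattice effect algebras of $E$ (Statement~\ref{lem:2.1}(ii)), all the lattice operations, the partial $\oplus$ and the $\ominus$ occurring in conditions (1)--(4) can be evaluated in $\B(E)$ or in $E$ with the same result, and the partial operations of $\Mea(\B(E))$ and of $\Mea(E)$ agree on $\Mea(E)\cap\B(E)$; so the only point requiring an argument is the comparison of the partial map $S$. For $x,y\in\Mea(\B(E))=\Mea(E)\cap\B(E)$ I claim ${\mathscr S}^{E}(x,y)={\mathscr S}^{\B(E)}(x,y)$: by Theorem~\ref{popismeager}(iv) we have $[0,x]\subseteq\B(E)$ and $[0,y]\subseteq\B(E)$, so whenever $z\in\Sh(E)$ satisfies $z=(z\wedge x)\oplus(z\wedge y)$ both summands lie in $\B(E)$, whence $z\in\B(E)\cap\Sh(E)=\C(E)=\Sh(\B(E))$; thus ${\mathscr S}^{E}(x,y)\subseteq\Sh(\B(E))$, and the two sets -- hence also the partial maps $S^{E}$ and $S^{\B(E)}$ -- coincide. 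With these identifications, conditions (1)--(4) in the statement are, term by term, the defining conditions of $\oplus_{\Tea(\B(E))}$ obtained by applying Theorem~\ref{tripletheor} to $\B(E)$, and the resulting sums agree; moreover $0_{\Tea(\B(E))}=(0_E,0_E)=\varphi_{\B(E)}(0_E)$ and $1_{\Tea(\B(E))}=(1_E,0_E)=\varphi_{\B(E)}(1_E)$. Therefore $\Tea(\B(E))$ is an effect algebra and $\varphi_{\B(E)}=\varphi_{/\B(E)}$ is an isomorphism $\B(E)\to\Tea(\B(E))$. I expect the identity ${\mathscr S}^{E}(x,y)\subseteq\C(E)$ (for $x,y$ meager in $\B(E)$) to be the one genuinely new point; everything else amounts to the bookkeeping that passing from $E$ to the full sub-lattice effect algebra $\B(E)$ leaves every operation appearing in Theorem~\ref{tripletheor} unchanged.
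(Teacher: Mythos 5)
Your proof is correct and follows essentially the same route as the paper's: apply Theorem~\ref{tripletheor} to $\B(E)$ (sharply dominating by Statement~\ref{sharpbe}(ii), atomic by Theorem~\ref{ccatomic}, with $\Sh(\B(E))=\C(E)$) and check that the triple $(\C(E),\Mea(\B(E)),h_{\B(E)})$ together with the mappings (M1)--(M4) is the natural restriction of $(\Sh(E),\Mea(E),h)$. You merely supply more detail than the paper does --- notably the identification $\widetilde{x}^{\B(E)}=\widetilde{x}$ via uniqueness of the BDE and the verification ${\mathscr S}^{E}(x,y)\subseteq\C(E)$ via Theorem~\ref{popismeager}(iv) --- where the paper simply asserts that the mappings restrict naturally.
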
}
\begin{proof} Recall that from Statement \ref{sharpbe}, (ii) and Theorem \ref{ccatomic} we 
know that $\B(E)$ is a sharply dominating atomic Archimedean lattice effect algebra. 
Moreover, $\Sh(\B(E))=\C(E)$, $\Mea(\B(E))=\Mea(E)\cap \B(E)$,  
$h_{\B(E)}(c)=h(c)\cap \B(E)$ for all $c\in\C(E)$  and, for all 
$y\in \B(E)$, we have that by Statement \ref{sharpbe}, (ii) $\widetilde{y}\in \C(E)$ and  $\widehat{y}\in \C(E)$. Since $\B(E)$ and 
$\C(E)$ are sub-lattice effect algebras of $E$ we obtain that the mappings 
(M1)-(M4) for the triple $(\C(E), \Mea(\B(E)), h_{\B(E)})$ are natural restrictions of 
the mappings (M1)-(M4) for the triple $(\Sh(E), \Mea(E), h)$. Invoking Theorem 
\ref{tripletheor} we obtain the required statement.
\end{proof}


\section*{Acknowledgements}  J. Paseka gratefully acknowledges Financial Support 
of the  Ministry of Education of the Czech Republic
under the project MSM0021622409 and of Masaryk University under the grant 0964/2009.

\end{document}